\newif\ifdraft
\patchcmd{\@citex}{\if@filesw}{\getcitekey\@citeb \if@filesw}%
    {\typeout{*** SUCCESS ***}}{\typeout{*** FAIL ***}}
\patchcmd{\nocite}{\if@filesw}{\getcitekey\@citeb \if@filesw}%
    {\typeout{*** SUCCESS ***}}{\typeout{*** FAIL ***}}
\newenvironment{aenumerate}{%
	\begin{enumerate}[label=(\alph{*}), ref=(\alph{*})]
}{%
	\end{enumerate}%
}
\definecolor{labelkey}{gray}{0.5}
\tikzset{commutative diagrams/arrow style=math font}
\newlength{\myarrowsize} 
\newenvironment{diagram*}[2]{%
\[%
\begin{tikzpicture}[>=cmto,baseline=(current bounding box.center),%
	to/.style={->,font=\scriptsize,cap=round},%
	into/.style={cmhook->,font=\scriptsize,cap=round},%
	onto/.style={-cmonto,font=\scriptsize,cap=round},%
	math/.style={matrix of math nodes, row sep=#2, column sep=#1,%
		text height=1.5ex, text depth=0.25ex}]%
}{%
\end{tikzpicture}%
\]%
\ignorespacesafterend%
}
\newcommand{\Dmod}{\mathscr{D}}
\newcommand{\Mmod}{\mathcal{M}}
\newcommand{\decal}[1]{\lbrack #1 \rbrack}
\newcommand{\shH}{\mathcal{H}}
\newcommand{\norm}[1]{\lVert#1\rVert}
\newcommand{\abs}[1]{\lvert #1 \rvert}
\newcommand{\eps}{\varepsilon}
\newcommand{\tensor}{\otimes}
\newcommand{\NN}{\mathbb{N}}
\newcommand{\ZZ}{\mathbb{Z}}
\newcommand{\QQ}{\mathbb{Q}}
\newcommand{\RR}{\mathbb{R}}
\newcommand{\CC}{\mathbb{C}}
\newcommand{\HH}{\mathbb{H}}
\newcommand{\PP}{\mathbb{P}}
\newcommand{\pder}[2]{\frac{\partial #1}{\partial #2}}
\newcommand{\vfeld}[1]{\frac{\partial}{\partial #1}}
\newcommand{\menge}[2]{\bigl\{ \thinspace #1 \thinspace\thinspace \big\vert%
\thinspace\thinspace #2 \thinspace \bigr\}}
\newcommand{\mengge}[2]{\biggl\{ \thinspace #1 \thinspace\thinspace \bigg\vert%
\thinspace\thinspace #2 \thinspace \biggr\}}
\DeclareMathOperator{\im}{im}
\DeclareMathOperator{\Res}{Res}
\DeclareMathOperator{\Spec}{Spec}
\DeclareMathOperator{\id}{id}
\renewcommand{\Im}{\operatorname{Im}}
\renewcommand{\Re}{\operatorname{Re}}
\DeclareMathOperator{\Sym}{Sym}
\DeclareMathOperator{\gr}{gr}
\DeclareMathOperator{\End}{End}
\DeclareMathOperator{\Hom}{Hom}
\DeclareMathOperator{\SL}{SL}
\newcommand{\define}[1]{\emph{#1}}
\newcommand{\glie}{\mathfrak{g}}
\newcommand{\lie}[2]{\lbrack #1, #2 \rbrack}
\newcommand{\biglie}[2]{\bigl[ #1, #2 \bigr]}
\newcommand{\shf}[1]{\mathscr{#1}}
\newcommand{\OX}{\shf{O}_X}
\newcommand{\OmX}{\Omega_X}
\newcommand{\shHZ}{\shH_{\ZZ}}
\newcommand{\defeq}{\underset{\textrm{def}}{=}}
\newcommand{\restr}[1]{\big\vert_{#1}}
\newcommand{\argbl}{-}
\def\overbar#1#2#3{{%
	\setbox0=\hbox{\displaystyle{#1}}%
	\dimen0=\wd0
	\advance\dimen0 by -#2 
	\vbox {\nointerlineskip \moveright #3 \vbox{\hrule height 0.3pt width \dimen0}%
		\nointerlineskip \vskip 1.5pt \box0}%
}}
\newcommand{\dst}{\Delta^{\ast}}
\newcommand{\dstn}[1]{(\dst)^{#1}}
\newcommand{\into}{\hookrightarrow}
\newcommand{\HR}{H_{\RR}}
\newcommand{\HZ}{H_{\ZZ}}
\newcommand{\HC}{H_{\CC}}
\newcommand{\class}[1]{\lbrack #1 \rbrack}
\newcommand{\jl}{j_{\ast}}
\newcommand{\fu}{f^{\ast}}
\newcommand{\tl}{t_{\ast}}
\newcommand{\shO}{\shf{O}}
\let\@@seccntformat\@seccntformat
\renewcommand*{\@seccntformat}[1]{%
  \expandafter\ifx\csname @seccntformat@#1\endcsname\relax
    \expandafter\@@seccntformat
  \else
    \expandafter
      \csname @seccntformat@#1\expandafter\endcsname
  \fi
    {#1}%
}
\newcommand*{\@seccntformat@subsection}[1]{%
  \textbf{\csname the#1\endcsname.}
}
\let\@paragraph\paragraph
\renewcommand*{\paragraph}[1]{%
	\vspace{0.3\baselineskip}%
	\@paragraph{\textit{#1}}%
}
\newtheorem{theorem}[equation]{Theorem}
\newtheorem*{theorem*}{Theorem}
\newtheorem{lemma}[equation]{Lemma}
\newtheorem*{lemma*}{Lemma}
\newtheorem{corollary}[equation]{Corollary}
\newtheorem{proposition}[equation]{Proposition}
\newtheorem*{proposition*}{Proposition}
\newtheorem*{conjecture*}{Conjecture}
\theoremstyle{definition}
\newtheorem{definition}[equation]{Definition}
\newtheorem*{definition*}{Definition}
\theoremstyle{remark}
\newtheorem{example}[equation]{Example}
\newtheorem*{example*}{Example}
\newtheorem*{problem*}{Problem}
\theoremstyle{plain}
\newcommand{\theoremref}[1]{\hyperref[#1]{Theorem~\ref*{#1}}}
\newcommand{\lemmaref}[1]{\hyperref[#1]{Lemma~\ref*{#1}}}
\newcommand{\definitionref}[1]{\hyperref[#1]{Definition~\ref*{#1}}}
\newcommand{\propositionref}[1]{\hyperref[#1]{Proposition~\ref*{#1}}}
\newcommand{\conjectureref}[1]{\hyperref[#1]{Conjecture~\ref*{#1}}}
\newcommand{\corollaryref}[1]{\hyperref[#1]{Corollary~\ref*{#1}}}
\newcommand{\exampleref}[1]{\hyperref[#1]{Example~\ref*{#1}}}
\let\old@caption\caption
\renewcommand*{\caption}[1]{%
	\setcounter{figure}{\value{equation}}%
	\stepcounter{equation}%
	\old@caption{#1}\relax%
}
\newcounter{intro}
\newtheorem{intro-conjecture}[intro]{Conjecture}
\newtheorem{intro-corollary}[intro]{Corollary}
\newtheorem{intro-theorem}[intro]{Theorem}
\newcommand{\parref}[1]{\hyperref[#1]{\S\ref*{#1}}}
\newcommand{\chapref}[1]{\hyperref[#1]{Chapter~\ref*{#1}}}
\newcommand*\if@single[3]{%
  \setbox0\hbox{${\mathaccent"0362{#1}}^H$}%
  \setbox2\hbox{${\mathaccent"0362{\kern0pt#1}}^H$}%
  \ifdim\ht0=\ht2 #3\else #2\fi
  }
\newcommand*\rel@kern[1]{\kern#1\dimexpr\macc@kerna}
\newcommand*\widebar[1]{\@ifnextchar^{{\wide@bar{#1}{0}}}{\wide@bar{#1}{1}}}
\newcommand*\wide@bar[2]{\if@single{#1}{\wide@bar@{#1}{#2}{1}}{\wide@bar@{#1}{#2}{2}}}
\newcommand*\wide@bar@[3]{%
  \begingroup
  \def\mathaccent##1##2{%
    \if#32 \let\macc@nucleus\first@char \fi
    \setbox\z@\hbox{$\macc@style{\macc@nucleus}_{}$}%
    \setbox\tw@\hbox{$\macc@style{\macc@nucleus}{}_{}$}%
    \dimen@\wd\tw@
    \advance\dimen@-\wd\z@
    \divide\dimen@ 3
    \@tempdima\wd\tw@
    \advance\@tempdima-\scriptspace
    \divide\@tempdima 10
    \advance\dimen@-\@tempdima
    \ifdim\dimen@>\z@ \dimen@0pt\fi
    \rel@kern{0.6}\kern-\dimen@
    \if#31
      \overline{\rel@kern{-0.6}\kern\dimen@\macc@nucleus\rel@kern{0.4}\kern\dimen@}%
      \advance\dimen@0.4\dimexpr\macc@kerna
      \let\final@kern#2%
      \ifdim\dimen@<\z@ \let\final@kern1\fi
      \if\final@kern1 \kern-\dimen@\fi
    \else
      \overline{\rel@kern{-0.6}\kern\dimen@#1}%
    \fi
  }%
  \macc@depth\@ne
  \let\math@bgroup\@empty \let\math@egroup\macc@set@skewchar
  \mathsurround\z@ \frozen@everymath{\mathgroup\macc@group\relax}%
  \macc@set@skewchar\relax
  \let\mathaccentV\macc@nested@a
  \if#31
    \macc@nested@a\relax111{#1}%
  \else
    \def\gobble@till@marker##1\endmarker{}%
    \futurelet\first@char\gobble@till@marker#1\endmarker
    \ifcat\noexpand\first@char A\else
      \def\first@char{}%
    \fi
    \macc@nested@a\relax111{\first@char}%
  \fi
  \endgroup
}
\newcommand{\TZ}{T_{\ZZ}}
\newcommand{\TZt}{\widebar{T}_{\ZZ}}
\newcommand{\Hdgt}{\widebar{\Hdg}}
\newcommand{\epst}{\widebar{\eps}}
\DeclareMathOperator{\Hdg}{Hdg}
\newcommand{\Np}{N^{+}}
\newcommand{\Fsh}{F_{\sharp}}
\newcommand{\Phinil}{\Phi_{\mathit{nil}}}
\newcommand{\shHe}{\tilde{\shH}}
\newcommand{\shHQ}{\shH_{\QQ}}
\newcommand{\shHd}{\shH^{\ast}}
\newcommand{\HQ}{H_{\QQ}}
\newcommand{\CCst}{\CC^{\ast}}
\newcommand{\Fh}{\hat{F}}
\newcommand{\glieR}{\glie_{\RR}}
\newcommand{\Th}{\hat{T}}
\newcommand{\Dck}{\check{D}}
\newcommand{\epstl}{\tilde{\eps}}
\newcommand{\Wtl}{\tilde{W}}
\newcommand{\ttl}{\tilde{t}}
\newcommand{\ztl}{\tilde{z}}
\newcommand{\Ftl}{\tilde{F}}
\newcommand{\Phitl}{\tilde{\Phi}}
\newcommand{\Ttl}{\tilde{T}}
\newcommand{\Ytl}{\tilde{Y}}
\newcommand{\Ntl}{\tilde{N}}
\newcommand{\htl}{\tilde{h}}
\newcommand{\btl}{\tilde{b}}
\newcommand{\etl}{\tilde{e}}
\newcommand{\HCtl}{\tilde{H}_{\CC}}
\newcommand{\HZtl}{\tilde{H}_{\ZZ}}
\newcommand{\HRtl}{\tilde{H}_{\RR}}
\newcommand{\Gammatl}{\tilde{\Gamma}}
\newcommand{\Qtl}{\tilde{Q}}
\newcommand{\HQtl}{\tilde{H}_{\QQ}}
\newcommand{\famX}{\mathscr{X}}
\begin{document}

\title{The extended locus of Hodge classes}

\author{Christian Schnell}
\address{Department of Mathematics, Stony Brook University,
Stony Brook, NY 11794, USA}
\email{\tt cschnell@math.sunysb.edu}

\begin{abstract}
We introduce an ``extended locus of Hodge classes'' that also takes into account
integral classes that become Hodge classes ``in the limit''.
More precisely, given a polarized variation of integral Hodge structure of weight
zero on a Zariski-open subset of a complex manifold, we construct a canonical
analytic space that parametrizes limits of integral classes; the extended locus of
Hodge classes is an analytic subspace that contains the usual locus of Hodge
classes, but is finite and proper over the base manifold.
The construction uses Saito's theory of mixed Hodge modules and a small
generalization of the main technical result of Cattani, Deligne, and Kaplan. 
We study the properties of the resulting analytic space in the case of the
family of hyperplane sections of an odd-dimensional smooth projective variety.
\end{abstract}

\subjclass[2000]{14D07; 32G20; 14K30}
\keywords{Hodge class, locus of Hodge classes, variation of Hodge structure, mixed
Hodge module} 
\maketitle

\section{Overview}

\subsection{Hodge loci on Calabi-Yau threefolds}

The purpose of this paper is to describe the construction of the \define{extended
locus of Hodge classes} for polarized variations of $\ZZ$-Hodge structure of weight
zero. Before defining things more precisely, we shall consider a typical example
that shows why this is an interesting problem, and what some of the issues are. 

Let $X$ be a smooth projective Calabi-Yau threefold; this means that $\OmX^3 \simeq
\OX$, and that $H^1(X, \OX) = H^2(X, \OX) = 0$. We fix an embedding of $X$ into
projective space, with $\OX(1)$ the corresponding very ample line bundle, and
consider the family of hyperplane sections of $X$. These are parametrized by the
linear system
\[
	B = \bigl\lvert \OX(1) \bigr\rvert,
\]
and we let $B_0 \subseteq B$ denote the open subset that corresponds to smooth hyperplane
sections. Given a cohomology class $\gamma \in H^2(S, \ZZ)$ on a smooth hyperplane
section $S \subseteq X$, we can use parallel transport along paths in $B_0$ to move
$\gamma$ to other hyperplane sections; this operation is of course purely topological
and does not preserve the Hodge decomposition. The \define{Hodge locus of $\gamma$}
is the set
\[
	\menge{b \in B_0}{\text{$\gamma$ can be transported to a Hodge class on $S_b$}}.
\]
Most of these loci are non-empty: in fact, Voisin \cite{Voisin} has proved that the
union of the Hodge loci of all classes $\gamma \in H^2(S, \ZZ)$ is a dense subset of
$B_0$. Since Hodge classes on surfaces are algebraic, the Hodge locus is an algebraic
subvariety of $B_0$; in basic terms, what we are looking at are curves (or
algebraic one-cycles) on $X$ that lie on hyperplane sections. 

We observe that the expected dimension of the Hodge locus is zero. Indeed, a class
$\gamma \in H^2(S, \ZZ)$ is Hodge exactly when it pairs to zero against every
holomorphic two-form on $S$; because $X$ is a Calabi-Yau threefold, we have
\[
	h^0(S, \Omega_S^2) = h^0 \bigl( X, \OmX^3(S) \bigr) - h^0(X, \OmX^3)
			= h^0 \bigl( X, \OX(1) \bigr) - 1 = \dim B.
\]
The number of conditions is the same as the dimension of the parameter space,
and the Hodge locus of $\gamma$ should therefore have a ``virtual'' number of points;
those numbers are of interest in Donaldson-Thomas theory \cite{KMPS}. But there are
two issues that need to be dealt with:

\begin{enumerate}
\item If the Hodge locus actually has finitely many points, one can of course just
count them. But there may be components of positive dimension, and before one can
use excess intersection theory (or some other method) to assign them a number, one
has to compactify such components.
\item An obvious idea is to take the closure of the Hodge locus inside the projective
space $B$; but this is not the right thing to do because there are interesting limit
phenomena that one cannot see in this way.
\end{enumerate}

\begin{example*}
Here is a typical example. Consider a family of hyperplane sections
$S_t \subseteq X$, parametrized by $t \in \Delta$, with $S_t$ smooth for $t \neq 0$,
and $S_0$ having a single ordinary double point. In this case, $H^2(S_t, \ZZ)$ contains a
vanishing cycle $\gamma_t$, namely the class of an embedded two-sphere with
self-intersection number $\gamma_t^2 = -2$. The vanishing cycle is not a Hodge class
on $S_t$, but becomes one ``in the limit''. On the one hand, one has the limit
mixed Hodge structure, which is pure of weight two in this case; $\gamma_t$ is a
Hodge class in this Hodge structure. On the other hand, one can blow up $S_0$ at the
node; the exceptional divisor $E \simeq \PP^1$ satisfies $\class{E}^2 = -2$, and in a
sense, $\class{E}$ is the limit of the $\gamma_t$ as $t \to 0$.
\end{example*}

\subsection{Statement of the problem}

Abstracting from the example above, we now let $\shH$ be an arbitrary polarized
variation of $\ZZ$-Hodge structure of weight zero, defined on a Zariski-open subset
$X_0$ of a smooth projective variety $X$. The assumption about the weight is of
course just for convenience: if $\shH$ has even weight $2k$, we can always replace it by
the Tate twist $\shH(k)$, which has weight zero.  Let $F^p \shH$ denote the Hodge
bundles, and let $\shHZ$ denote the underlying local system. Although it is not
strictly necessary for what follows, we shall assume that the polarization form $Q
\colon \shHQ \tensor \shHQ \to \QQ(0)$ is defined over $\QQ$.

Let us first recall the definition of the usual locus of Hodge classes. The local
system $\shHZ$ determines a (not necessarily connected) covering space 
\[
	\TZ \to X_0, 
\]
whose sheaf of holomorphic sections is isomorphic to $\shHZ$. The points of $\TZ$ are
pairs $(x,h)$, with $h \in \shH_{\ZZ, x}$ a class in the fiber over the
point $x \in X_0$.

\begin{definition*}
The \define{locus of Hodge classes} of $\shH$ is the set
\[
	\Hdg(\shH) = \menge{(x, h) \in \TZ}{%
		\text{$h \in \shH_{\ZZ, x} \cap F^0 \shH_x$ is a Hodge class}}.
\]
\end{definition*}

We consider the locus of Hodge classes (a subset of $\TZ$) instead of the individual
Hodge loci (subsets of $X_0$) because it is useful to keep track of the Hodge classes
themselves: over any given point, there may be more than one such class, and
all of them may be permuted by the monodromy action. On the face of it, 
$\Hdg(\shH)$ is just an analytic subset of the complex manifold $\TZ$; the following
remarkable theorem by Cattani, Deligne, and Kaplan \cite{CDK} shows that it
is actually a countable union of algebraic varieties.

\begin{theorem}[Cattani, Deligne, Kaplan] \label{thm:CDK}
Every connected component of $\Hdg(\shH)$ is an algebraic variety, finite and proper
over $X_0$.
\end{theorem}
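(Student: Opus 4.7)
The plan proceeds in three stages. \emph{First}, observe that $\Hdg(\shH)$ is a closed analytic subset of $\TZ$: over a simply connected open $U \subseteq X_0$, the local system $\shHZ$ trivializes, and the condition that a flat integral section $h$ lie in $F^0 \shH_x$ is the vanishing at $x$ of its image in the holomorphic quotient bundle $\shHO / F^0 \shH$. Together with the openness of $\TZ \to X_0$ as a covering, this gives $\Hdg(\shH)$ the structure of a closed analytic subspace of the complex manifold $\TZ$.

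\emph{Second}, and this is the heart of the matter, I would extend $\Hdg(\shH)$ to a closed analytic subset of a partial compactification proper over a smooth projective completion $\widebar{X} \supseteq X_0$ in which $D = \widebar{X} \setminus X_0$ is a simple normal crossings divisor. Near a boundary point $p$ pick a polydisk chart with $D$ locally given by $t_1 \cdots t_k = 0$, and after finite base change assume the local monodromies $T_i$ are unipotent with nilpotent logarithms $N_i$. Schmid's nilpotent orbit theorem produces a limit Hodge filtration $F_\infty$ on the reference fiber, while the several-variable $SL_2$-orbit theorem of Cattani--Kaplan--Schmid provides a sharp comparison between the Hodge metric on $\shH$ and an explicit model metric built from $F_\infty$ and the $N_i$. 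From this comparison one must extract the decisive estimate: any locally flat integral section $h$ which is a Hodge class at every point of a sequence $x_n \to p$ has Hodge norm growing at most polynomially in $\lvert \log \lvert t_i(x_n) \rvert \rvert$, and asymptotically lies in $W_0$ and in $F_\infty^0$ modulo $W_{-1}$ with respect to the limit mixed Hodge structure. Because the integral lattice in each fiber of $\TZ$ is discrete for the Hodge metric, this bound organizes the accumulating Hodge classes into finitely many limits at $p$ and yields a closed analytic extension $\Hdgt(\shH) \subseteq \TZt$ that is proper over $\widebar{X}$.

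\emph{Third}, I conclude algebraicity from this extension. Properness of each connected component $Z \subseteq \Hdg(\shH)$ over $X_0$ is automatic from the proper extension $\widebar{Z} \to \widebar{X}$; finiteness of the fibers follows by combining properness with the lattice discreteness of the first stage (bounded Hodge norm plus integrality gives a finite set). Grauert's direct image theorem applied to $\widebar{Z} \to \widebar{X}$ then shows that the image is a closed analytic subset of the projective variety $\widebar{X}$, hence algebraic by Chow. A finite proper surjection onto a quasi-projective variety carries an algebraic structure on the source via GAGA applied to the finite direct image, so $Z$ itself is algebraic.

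The principal obstacle is the boundary estimate in the second stage. Individual integral classes typically have Hodge norms that blow up along $D$, and the crucial point is that the \emph{coherent} constraint of staying inside $F^0 \shH$ along an entire sequence $x_n \to p$ forces enough cancellation to keep the growth polynomial in $\lvert \log \lvert t_i \rvert \rvert$. Producing this in a form strong enough to handle every stratum of $D$ uniformly requires the full several-variable $SL_2$-orbit theorem of \cite{Cattani+Kaplan+Schmid:Degeneration}, and then assembling the local analytic extensions into one global proper closed analytic space $\Hdgt(\shH) \subseteq \TZt$ is the technical core of the argument in \cite{Cattani+Deligne+Kaplan}.
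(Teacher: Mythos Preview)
The paper does not give its own proof of this theorem; it is quoted as a result from \cite{CDK}. That said, \chapref{chap:local} proves a strict generalization of the decisive local step (\theoremref{thm:local2} and \theoremref{thm:local3} specialize to \cite[Theorem~2.16]{CDK} when the harmless sequence $b(m)$ is identically zero), so one can compare your sketch against that argument.

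Your three-stage architecture matches the CDK approach, and you correctly name the several-variable $\SL(2)$-orbit theorem as the essential analytic input. One point is misstated, however. You write that a flat integral section which is Hodge along a sequence $x_n \to p$ has \emph{Hodge norm} growing at most polynomially in $\log \abs{t_i}^{-1}$, and that ``staying inside $F^0 \shH$ forces enough cancellation to keep the growth polynomial.'' But the Hodge norm of a Hodge class $h$ equals $Q(h,h)^{1/2}$, a constant along any flat translate; there is nothing to cancel. The correct chain of estimates (compare \lemmaref{lem:fiber}, \propositionref{prop:bounded}, and Step~1 in \parref{par:one-dim} and \parref{par:proof-n}) is: bounded $Q(h,h)$ plus the Hodge condition gives bounded Hodge norm; bounded Hodge norm is equivalent to boundedness of the rescaled vector $e(m) h(m)$; and because $e(m)^{-1}$ is polynomial in the $t_k(m) \sim \log \abs{s_k(m)}^{-1}$, it is the norm of $h(m)$ in a \emph{fixed} reference metric on $\HC$ that grows at most polynomially. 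It is this last bound, combined with the discreteness of $\HZ$ in the graded pieces of the weight filtration, that forces $h(m) \in W_0$ and, after further work, makes $h(m)$ constant on a subsequence.

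Your Stage~3 is correct in spirit, though both \cite{CDK} and this paper organize the endgame more directly: the local analysis already shows that over each polydisk neighborhood of a boundary point only finitely many partially monodromy-invariant integral classes contribute, and the closure of each piece is cut out by explicit holomorphic equations (see \propositionref{prop:closure}). Properness and finiteness over $X_0$ are then read off immediately, and algebraicity follows from Chow on the image in $X$; no separate appeal to Grauert's direct image theorem is needed.
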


This theorem is one of the best results in Hodge theory. When $\shH$ comes
from a family of smooth projective varieties, the Hodge conjecture predicts that
$\Hdg(\shH)$ should be a countable union of algebraic varieties; the point is that
Cattani, Deligne, and Kaplan were able to prove this \emph{without assuming the conjecture}. 

\theoremref{thm:CDK} shows that every connected component of the locus of Hodge
classes can be extended (more or less uniquely) to a projective algebraic variety
that is finite and proper over $X$. As in the example of Calabi-Yau threefolds, this
suggests that we should look for a natural compactification of $\Hdg(\shH)$ that also
takes into account those integral classes that only become Hodge classes ``in the
limit''. The purpose of this paper is to solve that problem with the help of
Saito's theory of mixed Hodge modules. The idea is to construct a complex analytic
space $\TZt$ that extends $\TZ$, and to use it for defining the \define{extended
locus of Hodge classes}. As far as I know, it was Clemens who first suggested working
directly with limits of integral classes; in any case, I learned this idea from
him.

\subsection{The case of a Hodge structure}
\label{par:point}

To motivate the construction, let us first look at the case of a single Hodge
structure $H$. We assume that $H$ is polarized and integral of weight zero; we denote the
underlying $\ZZ$-module by $\HZ$; the polarization by $Q$; and the Hodge filtration
by $F^{\bullet} H$. Let $\Hdg(H) = \HZ \cap F^0 H$ be the set of Hodge classes in
$H$.  According to the bilinear relations, a class $h \in \HZ$ is Hodge exactly when
it is perpendicular (under $Q$) to the space $F^1 H$; this says that $\Hdg(H)$ is
precisely the kernel of the linear mapping
\[
	\eps \colon \HZ \to (F^1 H)^{\ast}, \quad h \mapsto Q(h, \argbl).
\]
At first, it may seem that $\eps$ is not good for much else, because its image is not
a nice subset of $(F^1 H)^{\ast}$. In fact, the dimension of the vector space $F^1 H$
can be much smaller than the rank of $\HZ$, and so $\eps$ will typically have dense
image. But it turns out that the restriction of $\eps$ to the subset
\[
	\HZ(K) = \menge{h \in \HZ}{\abs{Q(h,h)} \leq K}
\]
is well-behaved. The idea of bounding the self-intersection number of the integral
classes already occurs in the paper by Cattani, Deligne, and Kaplan. To back up this
claim, we have the following lemma; note that the estimate in the proof will play an
important role in our analysis later on.

\begin{lemma} \label{lem:fiber}
The mapping $\eps \colon \HZ(K) \to (F^1 H)^{\ast}$ is finite and proper, and its
image is a discrete subset of the vector space $(F^1 H)^{\ast}$.
\end{lemma}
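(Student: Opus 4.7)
The plan is to use the Hodge-Riemann bilinear relations to show that the two conditions ``$\abs{Q(h,h)} \leq K$'' and ``$\eps(h)$ lies in a bounded subset of $(F^1 H)^{\ast}$'' together force $h$ to be bounded with respect to the positive-definite Hodge norm on $\HR$. Finiteness of fibers, properness, and discreteness of the image will then follow immediately from the fact that $\HZ$ is a lattice in $\HR$, so every bounded region in $\HR$ contains only finitely many integral points.

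Since $H$ has weight zero, the splitting $\HC = F^1 H \oplus H^{0,0} \oplus \overline{F^1 H}$ lets me write every real class in the form $h = h^+ + h^0 + \overline{h^+}$, with $h^+ \in F^1 H$ and $h^0 \in H^{0,0} \cap \HR$. Equip $\HC$ with the Hodge inner product $\langle u, v \rangle = Q(Cu, \bar v)$, where the Weil operator $C$ acts on $H^{p,-p}$ as multiplication by $(-1)^p$. The first Hodge-Riemann relation says that $Q$ vanishes on $H^{p,-p} \otimes H^{q,-q}$ unless $p + q = 0$, hence restricts to a perfect pairing between $F^1 H$ and $\overline{F^1 H}$. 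This shows on the one hand that $\eps(h)(v) = Q(\overline{h^+}, v)$ for $v \in F^1 H$, so that the assignment $h^+ \mapsto \eps(h)$ is a linear isomorphism, and the dual Hodge norm of $\eps(h)$ is equivalent to $\|h^+\|$. On the other hand, orthogonality of distinct Hodge types gives the identity
\[
    Q(h, h) = \|h^0\|^2 + 2 \sum_{p \geq 1} (-1)^p \|h^{p,-p}\|^2,
\]
from which the triangle inequality yields $\|h^0\|^2 \leq \abs{Q(h,h)} + 2 \|h^+\|^2 \leq K + 2 \|h^+\|^2$. Combined with $\|h\|^2 = 2\|h^+\|^2 + \|h^0\|^2$, this gives the desired estimate of $\|h\|$ in terms of $K$ and $\|\eps(h)\|$.

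With the estimate in hand, $\HZ(K) \cap \eps^{-1}(B)$ lies in a bounded region of $\HR$ for every bounded $B \subseteq (F^1 H)^{\ast}$, and therefore meets the lattice $\HZ$ in only finitely many points. This shows that $\eps \restr{\HZ(K)}$ is proper; its fibers are finite as a special case of properness, and its image is discrete because every bounded neighbourhood of an image point contains only finitely many image points.

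The main obstacle is the bilinear-algebra calculation in the middle step. It is not conceptually deep, but the signs coming from the second Hodge-Riemann relation have to be tracked with care; I expect the author to record the explicit constants in an inequality of the shape $\|h\|^2 \leq C\bigl( K + \|\eps(h)\|_{\ast}^2 \bigr)$, since the excerpt flags that this estimate will play an important role in the later analysis.
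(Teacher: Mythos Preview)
Your proposal is correct and follows essentially the same approach as the paper: both arguments use the Hodge decomposition and the bilinear relations to bound $\norm{h}^2$ by $K + 4\norm{\eps(h)}^2$, and then conclude via discreteness of the lattice. The only cosmetic difference is that where you invoke the abstract isomorphism $\overline{F^1 H} \simeq (F^1 H)^{\ast}$ to identify $\norm{\eps(h)}$ with $\norm{h^+}$, the paper makes this concrete by testing against the single vector $v = \sum_{p \geq 1} (-1)^p h^{p,-p}$ and reading off $\norm{v}^2 = \abs{Q(h,v)} \leq R\norm{v}$ directly.
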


\begin{proof}
We have to show that the preimage of any bounded subset of $(F^1 H)^{\ast}$ is
finite. It will be convenient to measure things in the \emph{Hodge norm}: if
\[
	h = \sum_p h^{p,-p}, \quad \text{with $h^{p,-p} 
		\in F^p H \cap \widebar{F^{-p} H}$,}
\]
is the Hodge decomposition of a vector $h \in H$, then its Hodge norm is
\[
	\norm{h}_H^2 = \sum_p \norm{h^{p,-p}}_H^2  
		= \sum_p (-1)^p Q \Bigl( h^{p,-p}, \widebar{h^{p,-p}} \Bigr).
\]
Now suppose that $h \in \HZ$ satisfies $\abs{Q(h,h)} \leq K$ and $\norm{\eps(h)}_H \leq
R$; it will be enough to prove that $\norm{h}_H$ is bounded by a quantity depending only
on $K$ and $R$. The assumption on $\eps(h)$ means that $\abs{Q(h,v)} \leq R
\norm{v}_H$ for every $v \in F^1 H$. If we apply this inequality to the vector
\[
	v = \sum_{p \geq 1} (-1)^p h^{p,-p},
\]
we find that $\norm{v}_H^2 = \abs{Q(h, v)} \leq R \norm{v}_H$, and hence that
\[
	\sum_{p \geq 1} \norm{h^{p,-p}}_H^2 = \norm{v}_H^2 \leq R^2.
\]
Because $h$ is invariant under conjugation, it follows that $\norm{h}_H^2 \leq
\norm{h^{0,0}}_H^2 + 2 R^2$. This leads to the conclusion that $\norm{h}_H^2 \leq K + 4
R^2$, because
\[
	Q(h,h) = \norm{h^{0,0}}_H^2 + \sum_{p \neq 0} (-1)^p \norm{h^{p,-p}}_H^2 \leq K.
\]
In particular, there are only finitely many possibilities for $h \in \HZ$, which
means that $\eps$ is a finite mapping, and that the image of $\eps$ is a discrete
subset of $(F^1 H)^{\ast}$.
\end{proof}

\subsection{The general case}

Now let us return to the general case. As in \cite{CDK}, it is not actually necessary
to assume that $X$ is projective; we shall therefore consider a polarized variation
of $\ZZ$-Hodge structure $\shH$ of weight zero, defined on a Zariski-open subset
$X_0$ of an arbitrary complex manifold $X$. By performing the construction in
\parref{par:point} at every point of $X_0$, we obtain a holomorphic mapping
\[
	\eps \colon \TZ \to T(F^1 \shH);
\]
here $T(F^1 \shH) = \Spec \bigl( \Sym F^1 \shH \bigr)$ is the holomorphic vector
bundle on $X_0$ whose sheaf of holomorphic sections is $(F^1 \shH)^{\ast}$.
The locus of Hodge classes $\Hdg(\shH)$ is then exactly the preimage of the zero
section in $T(F^1 \shH)$. For any rational number $K \geq 0$, we consider the
submanifold 
\[
	\TZ(K) = \menge{(x,h) \in \TZ}{\abs{Q_x(h,h)} \leq K}.
\]
It is a union of connected components of the covering space $\TZ$, because the
quantity $Q_x(h,h)$ is obviously constant on each connected component. More or less
directly by \lemmaref{lem:fiber}, the holomorphic mapping
\[
	\eps \colon \TZ(K) \to T(F^1 \shH)
\]
is finite and proper, with complex-analytic image; moreover, one can show that the
mapping from $\TZ(K)$ to the normalization of the image is a finite covering space.
For the details, please consult \parref{par:setup} below.

To construct an extension of $\TZ(K)$ to an analytic space over $X$, we use the
theory of Hodge modules \cite{Saito-MHM}. Let $M$ be the polarized Hodge module of
weight $\dim X$ with strict support $X$, canonically associated with $\shH$. We
denote the underlying filtered left $\Dmod$-module by the symbol $(\Mmod, F_{\bullet}
\Mmod)$. The point is that 
\[
	\Mmod \restr{X_0} \simeq \shH \quad \text{and} \quad
		F_k \Mmod \restr{X_0} \simeq F^{-k} \shH;
\]
in particular, the coherent sheaf $F_{-1} \Mmod$ is an extension of the Hodge
bundle $F^1 \shH$ to a coherent sheaf of $\OX$-modules. Now consider the holomorphic
mapping
\[
	\eps \colon \TZ(K) \to T(F_{-1} \Mmod),
\]
where the analytic space on the right-hand side is defined as before as the spectrum
of the symmetric algebra of the coherent sheaf $F_{-1} \Mmod$. We have already seen
that $\eps \bigl( \TZ(K) \bigr)$ is an analytic subset of $T(F^1 \shH)$; since we are
interested in limits of integral classes, we shall extend it to the larger space
$T(F_{-1} \Mmod)$ by taking the closure. The main result of the paper is that the
closure remains analytic.

\begin{theorem} \label{thm:main}
The closure of $\eps \bigl( \TZ(K) \bigr)$ is an analytic subset of $T(F_{-1}
\Mmod)$.
\end{theorem}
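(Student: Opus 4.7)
The problem is local on $X$: the map $\eps$ is already finite and proper on $X_0$ with analytic image in $T(F^1 \shH)$, so only the behavior near the boundary $D = X \setminus X_0$ matters. By Hironaka's resolution of singularities I may assume that $D$ is a simple normal crossings divisor; fix a boundary point $x_0 \in D$ and local coordinates $(t_1, \ldots, t_n)$ on a polydisc $U \subseteq X$ with $D \cap U = (t_1 \cdots t_k = 0)$. Everything then reduces to proving that the closure of $\eps(\TZ(K))$ in $T(F_{-1}\Mmod)|_U$ is analytic.

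On $U$, Saito's theory gives an explicit description of $F_{-1}\Mmod$ in terms of Deligne's canonical extension of $\shH \otimes \shO$ and the limit mixed Hodge structure $(H_\infty, W_\bullet, F^\bullet_\infty)$ at $x_0$. Lifting to the universal cover $\HH^k \times \Delta^{n-k}$ of $U \cap X_0$, every multi-valued integral class becomes a single vector $h$ in a reference lattice $\HZ \subset H_\infty$, and Schmid's nilpotent orbit theorem says the Hodge filtration at a point with coordinates $t_j = e^{2\pi i z_j}$ approximates the nilpotent orbit $\exp\bigl(\sum_j z_j N_j\bigr) F^\bullet_\infty$ up to exponentially small error, where $N_j$ denotes the logarithm of the unipotent part of the $j$-th monodromy. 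Under this approximation $\eps$ becomes, modulo exponentially small corrections, the map sending $h$ to the functional $Q(h, \argbl)$ on the nilpotent orbit $\exp(\sum_j z_j N_j) F^1_\infty$; after twisting by the canonical extension this expression descends to a holomorphic map with values in $F_{-1}\Mmod|_U$.

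With this local model in hand, I would next combine the Hodge norm estimate in the proof of \lemmaref{lem:fiber} with the several-variables SL$_2$-orbit estimates of Cattani, Kaplan, and Schmid. The outcome is that any sequence $(x_j, h_j) \in \TZ(K)$ with $x_j \to x_0$ and $\eps(x_j, h_j)$ bounded in $T(F_{-1}\Mmod)$ must have its $h_j$ eventually lying in one of finitely many sublattices of $\HZ$ adapted to the weight filtration $W_\bullet$ of $H_\infty$ and cut out by vanishing conditions against the nilpotent orbit data. An adaptation of the CDK finiteness argument, applied to each such sublattice, shows that locally over $U$ only finitely many connected components of $\TZ(K)$ contribute to the closure of $\eps(\TZ(K))$, and each such component extends to an analytic subset of $T(F_{-1}\Mmod)|_U$ cut out by explicit equations in the nilpotent orbit coordinates. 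A locally finite union of analytic sets is analytic, which gives the theorem.

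The main obstacle is the step from closedness to analyticity. Closedness of $\eps(\TZ(K))$ in $T(F_{-1}\Mmod)$ would follow from a fairly soft argument combining \lemmaref{lem:fiber} with the nilpotent orbit theorem; showing that the closure is cut out by holomorphic equations requires the uniform Hodge-norm estimates of CDK to continue to hold on the boundary with $(F^1 H)^\ast$ replaced by the fiber of $F_{-1}\Mmod$, which in general is strictly larger than $F^1$ of the limit MHS because of mixed Hodge module contributions from deeper weight-graded pieces. This uniform extension of the CDK estimates is exactly the ``small generalization'' advertised in the introduction, and it is the technical heart of the proof.
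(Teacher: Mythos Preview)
Your outline follows the paper's strategy closely: reduce to a local normal-crossing situation, use the nilpotent orbit and $\SL(2)$-orbit machinery to show that only finitely many integral classes contribute over any bounded region, and then verify that each contributes an analytic set. You also correctly locate the technical heart in the gap between $F_{-1}\Mmod$ and $F^1$ of the canonical extension.

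Two concrete points where your sketch falls short of what is actually needed.

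First, resolution alone does not put you in the setting where the local analysis works. The paper's local argument requires \emph{unipotent} local monodromy, not merely quasi-unipotent. Your phrase ``logarithm of the unipotent part of the $j$-th monodromy'' suggests working directly in the quasi-unipotent case, but then the identification of $F_{-1}\Mmod$ with objects built from Deligne's canonical extension is no longer clean, and the extra sections that drive the argument (see below) are hard to write down. The paper instead pulls back along a further finite branched cover (followed by another resolution) to kill the semisimple part, proves the analyticity upstairs, and then pushes down via a comparison morphism $F_{-1}\Mmod' \to f^{\ast} F_{-1}\Mmod$ and Remmert's proper mapping theorem. Your ``I may assume'' hides this nontrivial descent.

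Second, on the technical heart you flag but do not resolve: the mechanism is specific and worth naming. Beyond the sections of $F^1 \shHe$, the sheaf $F_{-1}\Mmod$ contains sections of the shape $s_k^{-1} e^{\sum z_j N_j} e^{\Gamma(s)} N_k v$ for $v \in F^2$. Pairing $h(m)$ against these forces not only that $h(m)$ is close to $\Phi^0\bigl(z(m)\bigr)$ but that each $N_k$ applied to the discrepancy is $O\bigl(e^{-\alpha y_k(m)}\bigr)$. The paper packages this as $h(m) \equiv b(m) \bmod \Phi^0\bigl(z(m)\bigr)$ for a \emph{harmless} sequence $b(m)$, and then reruns the CDK induction under this weaker hypothesis. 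Your formulation in terms of ``finitely many sublattices adapted to the weight filtration'' is not the right shape: what one actually proves is that after passing to a subsequence $h(m)$ is \emph{constant} and annihilated by a positive combination $\sum a_j N_j$. It is this partial monodromy-invariance, not merely a weight bound, that makes the closure of each individual $\epstl_h(\HH^n)$ visibly analytic, via an explicit toric computation in the nilpotent-orbit coordinates.
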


The proof consists of two steps: (1) We reduce the problem to the special case where $X
\setminus X_0$ is a divisor with normal crossings and $\shHZ$ has unipotent local
mono\-dromy; this reduction is similar to \cite{Schnell-N}. (2) In that case, we prove
the theorem by a careful local analysis, using the theory of degenerating variations
of Hodge structure. In fact, we deduce the theorem from a strengthening of the main
technical result of Cattani, Deligne, and Kaplan, which we prove by adapting the
method introduced in \cite{CDK}. Rather than just indicating the necessary changes in
their argument, I have chosen to write out a complete proof; I hope that this will
make \chapref{chap:local} useful also to those readers who are only interested in the
locus of Hodge classes and the theorem of Cattani, Deligne, and Kaplan.

Once \theoremref{thm:main} is proved, it makes sense to consider the normalization of
the closure of $\eps \bigl( \TZ(K) \bigr)$. The mapping from $\TZ(K)$ to its
image in the normalization is a finite covering space; it can therefore be extended in a
canonical way to a finite branched covering by appealing to the
\emph{Fortsetzungssatz} of Grauert and Remmert. 

\begin{theorem} 
There is a normal analytic space $\TZt(K)$ containing the complex manifold $\TZ(K)$
as a dense open subset, and a finite holomorphic mapping 
\[
	\epst \colon \TZt(K) \to T(F_{-1} \Mmod),
\]
whose restriction to $\TZ(K)$ agrees with $\eps$. Moreover, $\TZt(K)$ and $\epst$ are
unique up to isomorphism.
\end{theorem}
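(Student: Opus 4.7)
The plan is to apply the Fortsetzungssatz of Grauert and Remmert to the finite covering space $\TZ(K) \to \tilde Y_0$ recalled in \parref{par:setup}, after first constructing a suitable normal analytic base $\tilde Y$ to extend.

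First, let $Y \subseteq T(F_{-1}\Mmod)$ be the closure of $\eps\bigl(\TZ(K)\bigr)$, which is an analytic subset by \theoremref{thm:main}. Because $F_{-1}\Mmod\restr{X_0} \simeq F^1\shH$, the vector bundle $T(F^1\shH)$ is an open subspace of $T(F_{-1}\Mmod)$; properness of $\eps$ over $X_0$ then shows that $Y_0 := Y \cap T(F^1\shH)$ coincides with $\eps\bigl(\TZ(K)\bigr)$ and is open in $Y$. The complement $A := Y \setminus Y_0$ is the intersection of $Y$ with the analytic subset $\pi^{-1}(X \setminus X_0)$, where $\pi \colon T(F_{-1}\Mmod) \to X$ denotes the structural projection; hence $A$ is analytic and, since $Y$ is the closure of $Y_0$, nowhere dense in $Y$. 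Let $\nu \colon \tilde Y \to Y$ denote the normalization, and set $\tilde A := \nu^{-1}(A)$ and $\tilde Y_0 := \tilde Y \setminus \tilde A$; then $\tilde Y_0$ is precisely the normalization of $Y_0$, and by \parref{par:setup} the induced map $\TZ(K) \to \tilde Y_0$ is a finite unramified covering of normal complex spaces.

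The Fortsetzungssatz of Grauert and Remmert \cite{GR} now supplies a unique normal complex space $\TZt(K)$ together with a finite surjective holomorphic map $\tilde\pi \colon \TZt(K) \to \tilde Y$ such that $\tilde\pi^{-1}(\tilde Y_0) = \TZ(K)$ and the restriction of $\tilde\pi$ agrees with the given cover. Defining $\epst$ as the composition
\[
    \TZt(K) \xrightarrow{\tilde\pi} \tilde Y \xrightarrow{\nu} Y \hookrightarrow T(F_{-1}\Mmod)
\]
yields a finite holomorphic map extending $\eps$, and $\TZ(K)$ is open and dense in $\TZt(K)$ because $\tilde Y_0$ is open and dense in $\tilde Y$ while $\tilde\pi$ is finite.

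For uniqueness, any candidate $(\TZt(K)', \epst')$ has image in $Y$ by continuity together with density of $\TZ(K)$, and this image contains the dense subset $Y_0 = \eps\bigl(\TZ(K)\bigr)$, so $\epst'$ is finite and surjective onto $Y$. Since $\TZt(K)'$ is normal, $\epst'$ factors uniquely through $\nu$ to give a finite holomorphic map $\TZt(K)' \to \tilde Y$ recovering the given covering over $\tilde Y_0$; the uniqueness clause of the Fortsetzungssatz then produces the canonical isomorphism with $\TZt(K)$. I expect no serious difficulty intrinsic to this statement: the genuine work is already behind us in \theoremref{thm:main}, which provides the analytic space $Y$ in the first place; once that is in hand, the present theorem is essentially a formal application of the Grauert--Remmert extension theorem.
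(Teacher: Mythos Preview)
Your proposal is correct and follows essentially the same approach as the paper: take the closure of $\eps\bigl(\TZ(K)\bigr)$ (analytic by \theoremref{thm:main}), pass to its normalization, observe via \propositionref{prop:eps} that $\TZ(K)$ is a finite covering over the open dense part, and then apply the Grauert--Remmert \emph{Fortsetzungssatz} for both existence and uniqueness. Your write-up spells out a few details (why $Y_0$ is open and $A$ nowhere dense, the factorization through the normalization in the uniqueness step) that the paper leaves implicit, but there is no substantive difference in strategy.
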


Since each $\TZ(K)$ is a union of connected components of the covering space $\TZ$,
we can take the union over all the $\TZt(K)$; this operation is well-defined because
of the uniqueness statement in the theorem. In this way, we get a normal analytic space
$\TZt$, and a holomorphic mapping
\[
	\epst \colon \TZt \to T(F_{-1} \Mmod)
\]
with discrete fibers that extends $\eps$. Now the preimage of the zero section in
$T(F_{-1} \Mmod)$ gives us the desired compactification for the locus of Hodge
classes.

\begin{definition} \label{def:Hdg-extended}
The \define{extended locus of Hodge classes} $\Hdgt(\shH)$ is the closed analytic
subscheme $\epst^{-1}(0) \subseteq \TZt$; by construction, it contains the locus of
Hodge classes.
\end{definition}

\subsection{The family of hyperplane sections}

The construction above can be applied to the family of hyperplane sections of a
smooth projective variety of odd dimension. In this case, one has a good description of
the filtered $\Dmod$-module $(\Mmod, F_{\bullet})$ in terms of residues
\cite{Schnell-R}, and it is possible to say more about the space $\TZt$. The fact
that $F_{-1} \Mmod$ is the quotient of an ample vector bundle leads to the following
result; it was predicted by Clemens several years ago.

\begin{theorem}
The analytic space $\TZt(K)$ is holomorphically convex. Every compact analytic
subset of dimension $\geq 1$ lies inside the extended locus of Hodge classes.  
\end{theorem}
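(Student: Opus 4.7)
The plan is to exploit positivity: for the family of hyperplane sections of an odd-dimensional smooth projective variety $X$, the residue description of \cite{Schnell-R} exhibits $F_{-1}\Mmod$ as a quotient of an ample vector bundle $\shE$ on the parameter space $B = \bigl\lvert \OX(1) \bigr\rvert$. The surjection $\shE \twoheadrightarrow F_{-1}\Mmod$ induces a closed immersion
\[
    T(F_{-1}\Mmod) \into T(\shE) = \Spec \Sym \shE
\]
whose target is the total space of the dual bundle $\shE^{\vee}$. Because $\shE$ is ample, this total space is $1$-convex, and Grauert's contraction theorem produces a proper holomorphic map $\pi \colon T(\shE) \to Y$ onto a normal Stein space that collapses the zero section $B \subset T(\shE)$ to a single point and has finite fibers elsewhere.

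Holomorphic convexity of $\TZt(K)$ then follows by composition. Combining the finite holomorphic map $\epst \colon \TZt(K) \to T(F_{-1}\Mmod)$ from the previous theorem with the closed immersion above and the proper map $\pi$ produces a proper holomorphic map
\[
    \TZt(K) \to Y
\]
into a Stein space. A routine argument about holomorphic hulls---if $L \subset \TZt(K)$ is compact, its image in $Y$ has compact holomorphic hull, whose preimage is compact and contains the hull of $L$---then shows that $\TZt(K)$ is holomorphically convex.

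For the second statement, let $Z \subset \TZt(K)$ be a compact analytic subset with $\dim Z \geq 1$. Because $\epst$ is finite, $\epst(Z) \subset T(F_{-1}\Mmod) \subset T(\shE)$ is compact analytic of the same dimension; pushing forward by $\pi$ into the Stein space $Y$ yields a compact analytic, hence finite, subset, so $\epst(Z)$ lies in finitely many fibers of $\pi$. Since every fiber of $\pi$ other than the one over the image of the zero section is finite, every positive-dimensional component of $\epst(Z)$ must lie in the zero section $B \subset T(\shE)$. Intersecting with $T(F_{-1}\Mmod)$ shows $\epst(Z) \subset \epst^{-1}(0)$, and therefore $Z \subset \Hdgt(\shH)$, as required.

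The crucial external input is the first step: identifying $F_{-1}\Mmod$ as a quotient of an ample vector bundle, which is where \cite{Schnell-R} enters and where the positivity of $\OX(1)$ does its work. Everything afterwards is a formal consequence of Grauert's contraction theorem, the finiteness of $\epst$, and standard Stein-theoretic properties of proper maps.
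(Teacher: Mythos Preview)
Your argument is correct and follows essentially the same approach as the paper: both invoke \theoremref{thm:ample} to embed $T(F_{-1}\Mmod)$ into the total space of the dual of an ample vector bundle, contract the zero section to obtain a proper map to a Stein space, and deduce holomorphic convexity of $\TZt(K)$ and the statement about compact subvarieties from the finiteness of $\epst$. The only blemish is the line ``$\epst(Z) \subset \epst^{-1}(0)$'', which conflates the target and the source; you mean that $\epst(Z)$ lies in the zero section of $T(F_{-1}\Mmod)$, and hence $Z \subset \epst^{-1}(0) = \Hdgt(\shH)$.
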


\subsection{Acknowledgements}

In writing this paper, I have benefited a lot from a new survey article by Cattani
and Kaplan \cite{CK-survey} that explains the results of \cite{CDK} in the case $n
\leq 2$. I thank Eduardo Cattani for letting me read a draft version, and for
answering some questions. Several years ago, Davesh Maulik asked me about the case of
hyperplane sections of a Calabi-Yau threefold; I thank him for many useful
conversations, and for his general interest in the problem.  Most of all, I thank my
former thesis adviser, Herb Clemens, for suggesting that one should study limits of
integral classes with the help of residues; as in many other cases, his idea
contained the seed for the solution of the general problem.

\section{Local analysis in the normal crossing case}
\label{chap:local}

\subsection{Main result}

The purpose of this chapter is to prove the following special case of
\theoremref{thm:main}. We shall see later how the general case can be reduced to
this one.

\begin{theorem} \label{thm:main-local}
Let $X$ be a complex manifold, and let $X_0 = X \setminus D$ be the complement of a
divisor $D \subseteq X$ with normal crossing singularities. Let $\shH$ be a polarized
variation of $\ZZ$-Hodge structure of weight zero on $X_0$ whose local monodromy at
each point of $D$ is unipotent. Then the closure of the image of the holomorphic
mapping 
\[
	\eps \colon \TZ(K) \to T(F_{-1} \Mmod)
\]
is a complex-analytic subspace of $T(F_{-1} \Mmod)$.
\end{theorem}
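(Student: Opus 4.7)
The plan is to work locally and combine Schmid's asymptotic theory for degenerating variations of Hodge structure with an enhancement of the technique of Cattani-Deligne-Kaplan. Since the statement is local on $T(F_{-1}\Mmod)$, I may assume that $X = \Delta^n$ and $D = \{t_1 \cdots t_k = 0\}$ and work in a neighborhood of the origin. Let $N_1, \ldots, N_k \in \End(H)$ be the commuting nilpotent logarithms of the unipotent monodromies acting on a fixed reference fiber $H$, and let $F_\infty^\bullet H$ denote the limiting Hodge filtration provided by the nilpotent orbit theorem. Under Deligne's canonical extension, $F_{-1}\Mmod$ is locally free near the origin with fiber $F_\infty^1 H$ at the origin, so $T(F_{-1}\Mmod)$ is the total space of a holomorphic vector bundle whose fibers over $D$ encode limiting covectors on $F_\infty^1 H$.

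On the universal cover $\HH^k \times \Delta^{n-k}$ with coordinates $(z, w)$ and $t_j = e^{2\pi i z_j}$, let $\Ftl^\bullet(z, w) = \exp\bigl(-\sum_j z_j N_j\bigr) F^\bullet(z, w)$ be the twisted Hodge filtration of Schmid, which extends holomorphically across $D$ with $\Ftl^\bullet(0) = F_\infty^\bullet$. Using the infinitesimal $Q$-invariance $N_j^T = -N_j$, the map $\eps$ on the universal cover becomes
\[
    \eps(h)(z, w)(v) = Q\bigl( \exp\bigl(-\textstyle\sum_j z_j N_j\bigr) h, \, v \bigr),
    \qquad v \in \Ftl^1(z, w).
\]
The right-hand side is a polynomial in $z$ of degree bounded by the nilpotency indices of the $N_j$, with coefficients $(-1)^{|a|} Q(N_1^{a_1} \cdots N_k^{a_k} h, v)/a!$. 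The closure of $\eps(\TZ(K))$ in $T(F_{-1}\Mmod)$ is thus controlled by the asymptotic behavior of these polynomials as $\Im z_j \to \infty$, as $h$ ranges over the discrete set $H_\ZZ(K)$ and modulo the $\ZZ^k$-action by the monodromy operators $T_j = \exp(N_j)$.

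The core of the proof is a strengthening of the main technical result of \cite{CDK}. Following their strategy, I would apply Schmid's $\SL_2$-orbit theorem together with the Cattani-Kaplan-Schmid asymptotic estimates for the Hodge norm to stratify the boundary of $T(F_{-1}\Mmod)$ over $D$ according to the family of monodromy weight filtrations $W(C)_\bullet$ indexed by the open faces $C$ of the monodromy cone spanned by $N_1, \ldots, N_k$. On each stratum one proceeds by descending induction on the weight: the bound $\abs{Q(h,h)} \leq K$, combined with a bound on $\eps(h)(z,w)$ in $T(F_{-1}\Mmod)$, should force the components of $h$ in each graded piece $\gr^{W(C)}_\ell$ to lie in a bounded region (and hence a finite set), producing a finite analytic description of the closure on that stratum. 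Gluing the strata and translating back through the twist $\exp\bigl(-\sum z_j N_j\bigr)$ should give the desired analytic subset of $T(F_{-1}\Mmod)$.

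The main obstacle is adapting the CDK finiteness argument from the case of Hodge classes (preimage of zero) to the case of the full image: rather than showing that only finitely many integral classes of bounded self-intersection lie in $F^0 \shH_x$, one must show that the entire set of values $\eps(h)(z,w)$ has an analytic closure. This requires uniform $Q$-norm estimates for the polynomial $\exp\bigl(-\sum z_j N_j\bigr) h$ along every face of the monodromy cone, and a careful compatibility between the stratification of $\TZ(K)$ by monodromy weight and the bundle-theoretic stratification of $T(F_{-1}\Mmod)$ over $D$ coming from the limit mixed Hodge structures. This is presumably why the author redoes the CDK analysis from scratch in \chapref{chap:local}, rather than merely citing it.
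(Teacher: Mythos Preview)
Your outline captures the right general shape—local reduction, $\SL(2)$-orbit theory, a CDK-style induction—but there is a concrete error in the setup that undermines the argument. You write that ``under Deligne's canonical extension, $F_{-1}\Mmod$ is locally free near the origin with fiber $F_\infty^1 H$.'' This conflates $F_{-1}\Mmod$ with the extended Hodge bundle $F^1\shHe$. In general $F_{-1}\Mmod$ is only coherent, and the essential feature of Saito's filtration here is precisely that $F_{-1}\Mmod$ strictly contains $F^1\shHe$: for every $v \in F^2$ and every index $k$, the section
\[
	\sigma_{v,k}(z) = e^{\sum z_j N_j} e^{\Gamma(s)} \, \frac{N_k v}{s_k}
\]
lies in $F_{-1}\Mmod$ (\lemmaref{lem:sections-Fp}) but has a pole along $s_k = 0$ when viewed in $F^1\shHe$. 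Your formula for $\eps$ only pairs $h$ against sections of $F^1\shHe$ and so misses these.

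This omission is not cosmetic; it is exactly what converts ``bounded'' into the stronger condition the induction needs. Pairing $h(m)$ against $\sigma_v$ for $v\in F^1$ shows that $h(m) \equiv b(m) \bmod \Phi^0\bigl(z(m)\bigr)$ with $b(m)$ bounded; pairing against the $\sigma_{v,k}$ shows in addition that $\norm{N_k b(m)}$ is $O(e^{-2\pi y_k(m)})$. The paper packages this as the notion of a \emph{harmless} sequence (\propositionref{prop:bounded-harmless}), and the harmless condition—not mere boundedness—is what drives the induction in \theoremref{thm:local2} and ultimately forces $h(m)$ to be constant and partially monodromy-invariant along a subsequence. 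With only boundedness of $b(m)$ (i.e.\ working over $T(F^1\shHe)$ as you do), Step~5 of the argument in \parref{par:one-dim} fails: one cannot show that $\norm{e(m) N h(m)}$ is exponentially small, because there is no exponentially small term on the right-hand side to compare it to.

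Structurally, the paper also does not describe the closure stratum by stratum as you propose. It first proves a sequential finiteness statement (\theoremref{thm:local1}): any sequence $(z(m),h(m))$ with $\epstl(z(m),h(m))$ bounded has, after passing to a subsequence, $h(m) = h$ constant with $\sum a_j N_j h = 0$ for some positive integers $a_j$. Analyticity of the closure then reduces to analyticity of the closure of $\epstl_h(\HH^n)$ for a \emph{single} such $h$, which is handled by an explicit computation in \propositionref{prop:closure}. Your stratification-by-faces idea is closer in spirit to the induction inside \theoremref{thm:local3}, but even there the induction parameter is the integer $d$ in the growth expansion $z(m) = iAt(m) + w(m)$, not the face of the monodromy cone.
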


Here is a brief outline of the proof. The assertion is local on $X$, and
unaffected by enlarging the divisor $D$. We may therefore assume that $X = \Delta^n$,
with coordinates $s_1, \dotsc, s_n$, and that $D$ is the divisor defined by $s_1
\dotsm s_n = 0$. Denote by $\HZ$ the generic fiber of the local system $\shHZ$, by $Q
\colon \HQ \tensor \HQ \to \QQ$ the pairing on $\HQ = \HZ \tensor_{\ZZ} \QQ$ giving the
polarization, and by $N_1, \dotsc, N_n$ the logarithms of the unipotent monodromy
transformations. Define 
\[
	\HZ(K) = \menge{h \in \HZ}{\abs{Q(h,h)} \leq K}.
\]
After pulling back to the universal covering space $\HH^n$, with coordinates $z_1,
\dotsc, z_n$ (related by $s_j = e^{2 \pi i z_j}$ to the coordinates on $\Delta^n$),
we get a holomorphic mapping 
\[
	\epstl \colon \HH^n \times \HZ(K) \to T(F_{-1} \Mmod).
\]
The main point is to show the following: Suppose that $\bigl( z(m), h(m) \bigr) \in \HH^n
\times \HZ(K)$ is a sequence such that $s_1(m), \dotsc, s_n(m)$ are going to
zero along a bounded sector, and $\epstl \bigl( z(m), h(m) \bigr)$ remains bounded.
Then after passing to a subsequence, $h(m)$ is constant and partially monodromy
invariant. Roughly speaking, this means that if we let $Z \subseteq \Delta^n$ denote
the smallest analytic subvariety containing all the points $\bigl( s_1(m), \dotsc,
s_n(m) \bigr)$, then $h(m)$ is monodromy invariant on $Z$. As explained in
\theoremref{thm:local1} below, such a result quickly leads to a proof
of \theoremref{thm:main-local}.

For technical reasons, we prove a slightly more general result. Let $\Phi
\colon \HH^n \to \HC$ denote the period mapping associated with the variation of
Hodge structure $\shH$ on $\dstn{n}$. Since $\epstl(z, h) = 0$ if and only if $h \in
\Phi^0(z)$, it is reasonable to expect that a bound on $\epstl(z, h)$ should control
the distance between $h$ and $\Phi^0(z)$. In fact, we show that if the sequence
$\epstl \bigl( z(m), h(m) \bigr)$ remains bounded in $T(F_{-1} \Mmod)$, then
\[
	h(m) \equiv b(m) \mod \Phi^0 \bigl( z(m) \bigr).
\]
Here $b(m) \in \HC$ is a bounded sequence with property that, for some $\alpha > 0$,
every $N_j b(m)$ is in $O(e^{-\alpha \Im z_j(m)})$. This uses the description of the
$\Dmod$-module $\Mmod$ in terms of Deligne's canonical extension $\shHe$, and the
fact that $F_{-1} \Mmod$, compared to $F^1 \shHe$, contains
additional sections with poles along the divisors $s_j = 0$. In
\theoremref{thm:local2} below, we prove that even under this weaker assumption,
a subsequence of $h(m)$ is constant and partially monodromy invariant.

In the special case when $b(m)$ is in $O(e^{-\alpha \max_j \Im z_j(m)})$, this result
is due to Cattani, Deligne, and Kaplan \cite[Theorem~2.16]{CDK}. Their proof is an
application of the theory of degenerating variations of Hodge structure, especially
the multi-variable $\SL(2)$-orbit theorem \cite{CKS}. We prove
\theoremref{thm:local2} by adapting their method; there are several difficulties,
caused by the fact that the sequence $b(m)$ is not necessarily going to zero, but
these difficulties can be overcome. As in the original, we argue by induction on the
dimension of $Z$; the description of period mappings in \cite{CKS} lends itself very
well to such an approach.

A subtle point is that the assumption $h(m) \in \HZ$ is needed in many places: it
ensures that certain terms that would only be going to zero when $h(m) \in \HR$ are
actually equal to zero after passing to a subsequence. Rather than giving an abstract
description of the proof, I have decided to include (in \parref{par:one-dim}) a
careful discussion of the special case $n = 1$. All the interesting features of the
general case are present here, but without the added complications of having several
nilpotent operators $N_1, \dotsc, N_n$ and several variables $z_1(m), \dotsc,
z_n(m)$.  Hopefully, this will help the reader understand the proof of the general
case.

\subsection{Local description of the problem}

Since \theoremref{thm:main-local} is evidently a local statement, we shall begin by
reviewing the local description of polarized variations of Hodge structure
\cite{Schmid, Kashiwara, CKS}.  Fortunately, Cattani and Kaplan have written a
beautiful survey article, where they describe all the major results \cite{CK}. Rather
than citing the original sources, I will only quote from this article. 

Let $\Delta^n$, with coordinates $s = (s_1, \dotsc, s_n)$, be the product of $n$
copies of the unit disk; then $\dstn{n}$ is the complement of the divisor defined by
$s_1 \dotsm s_n = 0$. Let $\shH$ be a polarized variation of $\ZZ$-Hodge structure of
weight zero on $\dstn{n}$; we assume that the underlying local system of free
$\ZZ$-modules $\shHZ$ has unipotent monodromy around each of the divisors $s_j = 0$.
Let $\HH^n$, with coordinates $z = (z_1, \dotsc, z_n)$, be the product of $n$ copies
of the upper half-plane; the holomorphic mapping
\[
	\HH^n \to \dstn{n}, \quad 
		z \mapsto \bigl( e^{2 \pi i z_1}, \dotsc, e^{2 \pi i z_n} \bigr)
\]
makes it into the universal covering space of $\dstn{n}$. If we pull back the local
system $\shHZ$, it becomes trivial; let $\HZ$ denote the free $\ZZ$-module of its
global sections, and $Q \colon \HQ \tensor \HQ \to \QQ(0)$ the symmetric bilinear form
coming from the polarization on $\shH$. By assumption, the monodromy transformation
around $s_j = 0$ is of the form $e^{N_j}$, where $N_j$ is a nilpotent endomorphism of
$\HQ = \HZ \tensor_{\ZZ} \QQ$ that satisfies $Q(N_j h_1, h_2) + Q(h_1, N_j h_2) = 0$.
It is clear that $N_1, \dotsc, N_n$ commute.

We now review the description of $\shH$ that results from the work of Cattani,
Kaplan, and Schmid. Let $\Dck$ denote the parameter space for filtrations
$F = F^{\bullet} \HC$ that satisfy $Q(F^p, \widebar{F^q}) = 0$ whenever $p + q > 0$;
let $D \subseteq \Dck$ denote the subset of those $F$ that define a polarized Hodge
structure on $\HC = \HZ \tensor_{\ZZ} \CC$ with polarization $Q$. Recall that $\Dck$
is a closed subvariety of a flag variety, and that the so-called \define{period
domain} $D$ is an open subset of $\Dck$.

The variation of Hodge structure $\shH$ can be lifted to a period mapping
\[
	\Phi \colon \HH^n \to D
\]
which is holomorphic and horizontal. It is known that every element of the cone
\[
	C(N_1, \dotsc, N_n) = \menge{a_1 N_1 + \dotsb + a_n N_n}{a_1, \dotsc, a_n > 0}
\]
defines the same monodromy weight filtration \cite[Theorem~2.3]{CK}; we denote the
common filtration by $W = W(N_1, \dotsc, N_n)$. In the limit, $\shH$ determines
another filtration $F \in \Dck$ for which the pair $(W, F)$ is a mixed Hodge
structure on $\HC$, polarized by $Q$ and every element of $C(N_1, \dotsc, N_n)$.
According to the nilpotent orbit theorem \cite[Theorem~2.1]{CK}, the period mapping
is approximated (with good bounds on the degree of approximation) by the associated
\define{nilpotent orbit}
\[
	\Phinil \colon \HH^n \to D, \quad \Phinil(z) = e^{\sum z_j N_j} F.
\]
One can use the mixed Hodge structure $(W, F)$ to express $\Phi(z)$ in terms of the
nilpotent orbit and additional holomorphic data on $\Delta^n$. Denote by
\[
	\glie = \menge{X \in \End(\HC)}{Q(Xh_1, h_2) + Q(h_1, Xh_2) = 0}
\]
the Lie algebra of infinitesimal isometries of $Q$. The mixed Hodge structure $(W,
F)$ determines a decomposition of $\HC$ with the following properties:
\[
	\HC = \bigoplus_{p,q} I^{p,q}, \quad
	W_w = \bigoplus_{p + q \leq w} I^{p,q}, \quad
	F^k = \bigoplus_{p \geq k} I^{p,q},
\]
A formula for the subspaces $I^{p,q}$ can be found in \cite[(1.12)]{CK}. The
decomposition leads to a corresponding decomposition of the Lie algebra
\[
	\glie = \bigoplus_{p,q} \glie^{p,q},
\]
with $\glie^{p,q}$ consisting of those $X$ that satisfy $X(I^{a,b}) \subseteq
I^{a+p,b+q}$. In this notation, we have $N_1, \dotsc, N_n \in \glie^{-1,-1}$;
moreover, the restriction of $Q$ to the subspace $I^{p,q} \tensor I^{p',q'}$ is
nondegenerate if $p' = -p$ and $q' = -q$, and zero otherwise.

The more precise version of the nilpotent orbit theorem \cite[Theorem~2.8]{CK}
is that the period mapping of $\shH$ can be put into the normal form
\begin{equation} \label{eq:period-std}
	\Phi \colon \HH^n \to D, \quad 
		\Phi(z) = e^{\sum z_j N_j} e^{\Gamma(s)} F,
\end{equation}
for a unique holomorphic mapping
\[
	\Gamma \colon \Delta^n \to \bigoplus_{p \leq -1} \glie^{p,q}
\]
with $\Gamma(0) = 0$. When we write $\Gamma(s)$, it is of course understood that $s_j
= e^{2 \pi i z_j}$ for every $j = 1, \dotsc, n$. The horizontality of the period
mapping has the following very useful consequence \cite[Proposition~2.6]{CK}.

\begin{proposition} \label{prop:Gamma}
Let $\Phi(z) = e^{\sum z_j N_j} e^{\Gamma(s)} F$ be the normal form of a period
mapping on $\HH^n$. Then for every $j = 1, \dotsc, n$, the commutator
\[
	\biglie{N_j}{e^{\Gamma(s)}} = N_j e^{\Gamma(s)} - e^{\Gamma(s)} N_j
\]
vanishes along the divisor $s_j = 0$.
\end{proposition}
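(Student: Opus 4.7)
The plan is to exploit Griffiths horizontality of $\Phi$ together with the grading $\glie = \bigoplus_{p,q} \glie^{p,q}$ induced by the limit mixed Hodge structure $(W,F)$. First I would lift the period mapping to the Lie-algebra picture by setting $g(z) = e^{\sum z_k N_k} e^{\Gamma(s)}$, so that $\Phi(z) = g(z) \cdot F$, and compute the Maurer-Cartan form
\[
	g(z)^{-1} \pder{g}{z_j}(z) \;=\; e^{-\Gamma(s)} N_j e^{\Gamma(s)} \;+\; 2\pi i\, s_j\, e^{-\Gamma(s)} \pder{}{s_j} e^{\Gamma(s)},
\]
using the chain rule $\partial s_j / \partial z_j = 2\pi i s_j$. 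Horizontality says this element of $\glie$ must lie in $F^{-1}\glie = \bigoplus_{p \geq -1} \glie^{p,q}$.

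Next I would decompose everything by the $p$-grading. Writing $\Gamma(s) = \sum_{p \leq -1} \Gamma^{(p)}(s)$ with $\Gamma^{(p)} \in \bigoplus_q \glie^{p,q}$, the operator $\mathrm{ad}\,\Gamma$ strictly lowers the $p$-index, while $N_j \in \glie^{-1,-1}$. Expanding $e^{-\mathrm{ad}\,\Gamma} N_j$ and the second summand via $e^{-\Gamma}\partial_{s_j} e^{\Gamma} = \frac{1-e^{-\mathrm{ad}\,\Gamma}}{\mathrm{ad}\,\Gamma}\partial_{s_j}\Gamma$, and projecting onto the components with $p \leq -2$, horizontality forces each such component to vanish identically in $s$. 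The lowest nontrivial equation, at level $p = -2$, reads
\[
	2\pi i\, s_j\, \pder{\Gamma^{(-1)}}{s_j} \;=\; \bigl[\, \Gamma^{(-1)},\, N_j \,\bigr],
\]
and the general pattern gives a recursive identity expressing $[\Gamma^{(p)}, N_j]$ modulo $s_j$ in terms of iterated brackets of lower $\Gamma^{(q)}$ with themselves and with $N_j$.

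The key step, which I expect to be the main obstacle, is turning this family of identities into the statement $[N_j, e^{\Gamma(s)}]\big|_{s_j=0} = 0$. I would proceed by induction on $-p$. For the base case $p=-1$, restricting the level-$(-2)$ equation to $s_j=0$ kills the left-hand side and yields $[\Gamma^{(-1)}(s), N_j]\big|_{s_j=0} = 0$. Inductively, assuming $[\Gamma^{(q)}(s), N_j]\big|_{s_j=0} = 0$ for all $-1 \geq q > p$, every iterated bracket of such $\Gamma^{(q)}$'s with $N_j$ vanishes at $s_j=0$, so the horizontality equation at level $p-1$ collapses, upon setting $s_j=0$, to $[\Gamma^{(p)}(s), N_j]\big|_{s_j=0} = 0$.

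Finally I would assemble the conclusion: summing the inductive statements gives $[\Gamma(s), N_j]\big|_{s_j = 0} = 0$, and since $N_j$ then commutes with every power of $\Gamma(s)\big|_{s_j=0}$, it commutes with $e^{\Gamma(s)}\big|_{s_j=0}$, which is exactly the assertion of the proposition. The delicate point throughout is bookkeeping of the $p$-grading in the Baker-Campbell-Hausdorff-type expansion of $e^{-\Gamma}\partial_{s_j}e^{\Gamma}$, but all the terms that could obstruct the induction are higher-order brackets of objects already known (by induction) to commute with $N_j$ at $s_j=0$, so they drop out.
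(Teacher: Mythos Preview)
Your approach is correct and is the standard argument; the paper itself does not prove this proposition but simply cites \cite[Proposition~2.6]{CK}. Two remarks are in order.

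First, the displayed ``level $p=-2$'' equation is misstated: $\partial_{s_j}\Gamma^{(-1)}$ lives in $\glie^{-1,\ast}$, not $\glie^{-2,\ast}$, so it cannot appear in an identity of $p$-degree $-2$. The correct equation at that level is
\[
   -\bigl[\Gamma^{(-1)}, N_j\bigr] + 2\pi i\, s_j \Bigl(\partial_{s_j}\Gamma^{(-2)} - \tfrac{1}{2}\bigl[\Gamma^{(-1)},\partial_{s_j}\Gamma^{(-1)}\bigr]\Bigr) = 0.
\]
Fortunately this does not affect your argument, since setting $s_j=0$ still yields $\bigl[\Gamma^{(-1)},N_j\bigr]\big|_{s_j=0}=0$, and your induction step is sound: in every iterated bracket $[\Gamma^{(p_1)},[\dotsc,[\Gamma^{(p_k)},N_j]\dotsc]]$ with $k\geq 2$ and $\sum p_i = p$, the innermost index satisfies $p_k > p$, so the inductive hypothesis applies.

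Second, the induction is actually unnecessary. Both summands of your Maurer--Cartan form lie in $\bigoplus_{p\leq -1}\glie^{p,\ast}$ automatically (since $N_j\in\glie^{-1,-1}$ and $\ad\Gamma$ only lowers the $p$-index), so horizontality forces the entire expression into $\glie^{-1,\ast}$. The $p\leq -2$ components of $e^{-\Gamma}N_j e^{\Gamma}$ are therefore equal to $-2\pi i\, s_j$ times something holomorphic and hence vanish at $s_j=0$; the $p=-1$ component is $N_j$ on the nose; and there are no $p\geq 0$ components. This gives $e^{-\Gamma}N_j e^{\Gamma}\big|_{s_j=0}=N_j$ in one stroke, without ever separating out the individual $[\Gamma^{(p)},N_j]$. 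The paper later records the global form of this same computation (citing \cite[(2.7)]{CK}) as the identity $g^{-1}\,dg = d\,\Gamma_{-1}(s) + \sum_j N_j\,dz_j$, from which the proposition follows immediately by setting $s_j = 0$.
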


The presentation of the period mapping in \eqref{eq:period-std} is also convenient
for describing the polarizable Hodge module $M$ that we obtain by taking the
intermediate extension of $\shH$ to $\Delta^n$. Here is a brief explanation of how
this works.

Let $(\shH, \nabla)$ be the flat vector bundle on $\dstn{n}$ underlying the
variation of Hodge structure. The monodromy being unipotent, this bundle admits a
canonical extension to a vector bundle $\shHe$ on $\Delta^n$, on which the connection
has a logarithmic pole along each of the divisors $s_j = 0$ with nilpotent residue
\cite[Proposition~5.2]{Deligne}. Explicitly, for each $v \in \HC$, the
holomorphic mapping
\begin{equation} \label{eq:canext}
	\sigma_v \colon \HH^n \to \HC, \qquad 
		\sigma_v(z) = e^{\sum z_j N_j} e^{\Gamma(s)} v
\end{equation}
descends to a holomorphic section of $\shH$ on $\dstn{n}$, and $\shHe$ is the
locally free subsheaf of $\jl \shH$ generated by all such sections \cite[(2.2)]{CK}.
The Hodge bundles $F^p \shH$ extend uniquely to holomorphic subbundles $F^p \shHe$ of
the canonical extension; concretely, $F^p \shHe$ is generated by those sections in
\eqref{eq:canext} with $v \in F^p$. Now let $(\Mmod, F)$ denote the
filtered $\Dmod$-module underlying $M$. Then $\Mmod$ is simply the $\Dmod$-submodule
of $\jl \shH$ generated by $\shHe$. Moreover, the Hodge filtration on $\Mmod$ is given by 
\[
	F_k \Mmod = \sum_{j \geq 0} F_j \Dmod_{\Delta^n} \cdot F^{j-k} \shHe.
\]
It satisfies $F_j \Dmod_{\Delta^n} \cdot F_k \Mmod \subseteq F_{j+k} \Mmod$, and each
$F_k \Mmod$ is a coherent sheaf on $\Delta^n$ whose restriction to $\dstn{n}$ agrees
with $F^{-k} \shH$. This is a translation of Saito's results in
\cite[\S3.10]{Saito-MHM}; note that Saito uses right $\Dmod$-modules.
For the purposes of our construction, the important point is that $F_{-1} \Mmod$ has
more sections than $F^1 \shHe$; the following lemma exhibits the ones that we will use.

\begin{lemma} \label{lem:sections-Fp}
For any vector $v \in F^2$, and any index $1 \leq k \leq n$, the formula
\[
	\sigma_{v,k}(z) = e^{\sum z_j N_j} e^{\Gamma(s)} \frac{N_k v}{s_k}
\]
defines a holomorphic section of the coherent sheaf $F_{-1} \Mmod$ on $\Delta^n$.
\end{lemma}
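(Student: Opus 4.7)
The plan is to exhibit $\sigma_{v,k}$ as a small modification of $\partial_{s_k} \sigma_v$. Since $\sigma_v \in F^2 \shHe$ and $\partial_{s_k} \in F_1 \Dmod$, we have $\partial_{s_k} \sigma_v \in F_1 \Dmod \cdot F^2 \shHe \subseteq F_{-1} \Mmod$, so the task reduces to showing that the correction terms are also in $F_{-1} \Mmod$.

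Differentiating $\sigma_v(z) = e^{\sum z_j N_j} e^{\Gamma(s)} v$ and using $\partial z_k/\partial s_k = 1/(2\pi i s_k)$ gives
\[
    \partial_{s_k} \sigma_v = \frac{N_k \sigma_v}{2\pi i s_k} + e^{\sum z_j N_j} \partial_{s_k}(e^{\Gamma(s)})\, v.
\]
By \propositionref{prop:Gamma}, the function $A_k(s) := [N_k, e^{\Gamma(s)}]/s_k$ extends holomorphically across $s_k = 0$, so $N_k e^{\Gamma} = e^{\Gamma} N_k + s_k A_k$. Substituting this and writing $X_k := e^{-\Gamma} \partial_{s_k}(e^{\Gamma})$, which takes values in $\bigoplus_{p \leq -1} \glie^{p,q}$ (so that the second summand above is $\sigma_{X_k v}$), while recognizing $\sigma_{N_k v}/s_k = \sigma_{v,k}$, yields the key identity
\[
    \sigma_{v,k} = 2\pi i\, \partial_{s_k} \sigma_v - 2\pi i\, \sigma_{X_k v} - \sigma_{e^{-\Gamma} A_k v}.
\]

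It remains to verify that each term on the right lies in $F_{-1} \Mmod$. The first is in $F_{-1} \Mmod$ by construction; the second is in $F^1 \shHe \subseteq F_{-1} \Mmod$ because $X_k$ strictly decreases the Hodge filtration, so $X_k v \in F^1$. The main obstacle is the third: $A_k$ takes values in $\bigoplus_{p \leq -2} \glie^{p,q}$, so the naive estimate only places $e^{-\Gamma} A_k v$ in $F^0$, which would not suffice. The missing ingredient is Griffiths transversality of the period mapping $\Phi$. Applied to the flag $F^p \Phi(z) = e^{\sum z_j N_j} e^{\Gamma} F^p$, horizontality gives $(e^{-\Gamma} N_k e^{\Gamma} + 2\pi i s_k X_k)(F^p) \subseteq F^{p-1}$; subtracting the trivial inclusion $N_k(F^p) \subseteq F^{p-1}$ and dividing by $s_k$ (legitimate on $s_k \neq 0$ and extending by the holomorphicity of $A_k$) produces the cleaner statement
\[
    (e^{-\Gamma} A_k + 2\pi i X_k)(F^p) \subseteq F^{p-1}.
\]
Specializing to $p = 2$ and $v \in F^2$, and subtracting $2\pi i X_k v \in F^1$, we conclude $e^{-\Gamma(s)} A_k(s) v \in F^1$ pointwise. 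Hence $\sigma_{e^{-\Gamma} A_k v}$ is a section of $F^1 \shHe \subseteq F_{-1} \Mmod$, which finishes the argument.
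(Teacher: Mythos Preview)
Your proof is correct and follows the same strategy as the paper: write $\sigma_{v,k}$ as $2\pi i\,\partial_{s_k}\sigma_v$ minus a section of $F^1\shHe$, using that $\sigma_v \in F^2\shHe \subseteq F_{-2}\Mmod$. The paper's execution is shorter because it invokes the horizontality identity \cite[(2.7)]{CK}, namely $d\bigl(e^{\sum z_j N_j}e^{\Gamma}\bigr)=e^{\sum z_j N_j}e^{\Gamma}\bigl(d\Gamma_{-1}+\sum N_j\,dz_j\bigr)$, which immediately gives $\partial_{s_k}\sigma_v = \sigma_{\partial_{s_k}\Gamma_{-1}\cdot v} + \tfrac{1}{2\pi i}\sigma_{v,k}$ with $\partial_{s_k}\Gamma_{-1}\cdot v \in F^1$ on the nose; you instead differentiate the product directly, produce the extra commutator term $A_k$, and then reconstruct the content of that identity by hand from Griffiths transversality to show $e^{-\Gamma}A_k v \in F^1$. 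So your route is a valid but more roundabout derivation of the same fact.
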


\begin{proof}
It is clear from the description above that $\sigma_v(z)$ is a holomorphic section of
$F_{-2} \Mmod$. By \cite[(2.7)]{CK}, the horizontality of the period mapping is
equivalent to
\[
	d \Bigl( e^{\sum z_j N_j} e^{\Gamma(s)} \Bigr) =
		e^{\sum z_j N_j} e^{\Gamma(s)} 
			\biggl( d \, \Gamma_{-1}(s) + \sum_{j=1}^n N_j dz_j \biggr).
\]
Using this identity and the fact that $s_k = e^{2 \pi i z_k}$, we compute that
\begin{align*}
	\vfeld{s_k} \sigma_v(z) 
		&= e^{\sum z_j N_j} e^{\Gamma(s)} 
			\left( \pder{\Gamma_{-1}(s)}{s_k} + \frac{N_k}{2 \pi i s_k} \right) v \\
		&= e^{\sum z_j N_j} e^{\Gamma(s)} \pder{\Gamma_{-1}(s)}{s_k} v 
			+ \frac{1}{2 \pi i} \sigma_{v,k}(z).
\end{align*}
This section belongs to $F_{-1} \Mmod$ by the definition of the filtration; we now
obtain the result by noting that $\Gamma_{-1}(s) \cdot v$ is a holomorphic
mapping from $\Delta^n$ into $F^1$.
\end{proof}

We close this section by describing the mapping $\eps \colon \TZ \to T(F_{-1}
\Mmod)$ in coordinates. With the conventions in \cite[(1.8)]{CK}, the \'etal\'e space
$\TZ$ of the local system $\shHZ$ can be obtained as the quotient of $\HH^n \times
\HZ$ by the following $\ZZ^n$-action:
\begin{equation} \label{eq:action-TZ}
	a \cdot (z, h) = \bigl( z + a, e^{\sum a_j N_j} h \bigr)
\end{equation}
As in the general problem, we define, for any integer $K \geq 0$, a set
\[
	\HZ(K) = \menge{h \in \HZ}{\abs{Q(h,h)} \leq K}.
\]
Then $\TZ(K)$ is the quotient of $\HH^n \times \HZ(K)$ by the action in
\eqref{eq:action-TZ}. Now $F^1 \shHe$ is a subsheaf of $F_{-1} \Mmod$, and so we have
a commutative diagram
\begin{equation} \label{eq:mappings}
\begin{tikzcd}
\HH^n \times \HZ(K) \rar \arrow[bend left=20]{rr}{\epstl} & \TZ(K) \rar{\eps} \arrow{dr} & 
		T(F_{-1} \Mmod) \dar{p} \\
& & T(F^1 \shHe)
\end{tikzcd}
\end{equation}
The concrete description of $F^1 \shHe$ shows that $T(F^1 \shHe) \simeq \Delta^n \times
\Hom(F^1, \CC)$. We therefore obtain a holomorphic mapping
\[
	\HH^n \times \HZ \to \Delta^n \times \Hom(F^1, \CC),
\]
which, in coordinates, is given by the formula
\[
	(z, h) \mapsto \Bigl( s, 
		v \mapsto Q \bigl( h, e^{\sum z_j N_j} e^{\Gamma(s)} v \bigr) \Bigr).
\]
As usual, the relation $s_j = e^{2 \pi i z_j}$ is implicit in the notation.

\subsection{Reformulation of the problem}

We continue to use the notation introduced in the previous section. Our goal is to
deduce \theoremref{thm:main-local} from the following more precise local statement.

\begin{theorem} \label{thm:local1}
Suppose we are given a sequence of points 
\[
	\bigl( z(m), h(m) \bigr) \in \HH^n \times \HZ(K)
\]
with $x_j(m) = \Re z_j(m)$ bounded and $y_j(m) = \Im z_j(m)$ going to infinity for 
every $j = 1, \dotsc, n$. If $\epstl \bigl( z(m), h(m) \bigr)$ remains bounded inside
$T(F_{-1} \Mmod)$, then there is a subsequence with the following properties: 
\begin{aenumerate}
\item \label{en:local1-a}
The sequence $h(m)$ is constant and equal to $h \in \HZ(K)$.
\item One has $(a_1 N_1 + \dotsb + a_n N_n) h = 0$ for certain positive integers
$a_1, \dotsc, a_n$; in particular, $h \in W_0$.  
\item There is a vector $w \in \CC^n$ such that
\[
	\lim_{m \to \infty} e^{-\sum z_j(m) N_j} h(m) = e^{-\sum w_j N_j} h.
\]
\item Each $N_k h$ is a rational Hodge class in the mixed Hodge structure
\[
	\bigl( W_{-2}, e^{\sum w_j N_j} F \bigr)(-1).
\]
\end{aenumerate}
\end{theorem}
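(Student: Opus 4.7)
The plan is to convert the boundedness of $\epstl\bigl(z(m),h(m)\bigr)$ in $T(F_{-1}\Mmod)$ into concrete analytic estimates on the pairings $Q\bigl(h(m),\, e^{\sum z_j(m)N_j} e^{\Gamma(s(m))} v\bigr)$, and then to feed these into the multi-variable $\SL(2)$-orbit theorem of \cite{CKS} in order to extract the limit $h$. From the projection $p$ in diagram~(\ref{eq:mappings}), boundedness in $T(F^1 \shHe)$ gives
\[
\Bigabs{Q\bigl(h(m),\, e^{\sum z_j(m) N_j} e^{\Gamma(s(m))} v\bigr)} = O(1) \qquad (v \in F^1),
\]
while the extra sections $\sigma_{v,k}$ from \lemmaref{lem:sections-Fp}, which generate $F_{-1}\Mmod$ over $F^1\shHe$ along the boundary, force the sharper decay
\[
\Bigabs{Q\bigl(h(m),\, e^{\sum z_j(m) N_j} e^{\Gamma(s(m))} N_k v\bigr)} = O\bigl(\abs{s_k(m)}\bigr) \qquad (v \in F^2).
\]
Setting $\widetilde h(m) = e^{-\sum z_j(m) N_j} h(m)$ and using \propositionref{prop:Gamma} to control the commutators $\lie{N_k}{e^{\Gamma(s)}}$, these estimates combine into the cleaner statement that, modulo $\Phi^0\bigl(z(m)\bigr)$, the vector $h(m)$ is congruent to a bounded $b(m) \in \HC$ whose images $N_j b(m)$ are exponentially small, of order $O\bigl(e^{-\alpha y_j(m)}\bigr)$ for some $\alpha > 0$.

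With the hypothesis recast in this form, I would then run an induction on $n$ parallel to that of \cite{CDK}. After passing to a subsequence, one arranges that the ratios $y_1(m)/y_2(m),\dotsc,y_{n-1}(m)/y_n(m)$ are bounded, so that \cite{CKS} supplies commuting $\sltwo$-triples splitting the weight filtration $W(N_1,\dotsc,N_n)$ and approximating the period mapping by a nilpotent orbit. Writing $h(m)$ in the weight decomposition relative to these triples, the integrality $h(m) \in \HZ$ together with the discreteness argument of \lemmaref{lem:fiber} forces components of strictly positive $\sltwo^n$-weight to vanish along a subsequence, while components of strictly negative weight are controlled by the exponential decay of $N_j b(m)$. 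This collapses $h(m)$ onto a weight-zero piece that converges to a single integral vector $h \in W_0$, yielding \textbf{(a)} and \textbf{(b)}; extracting the residual bounded real parts and logarithmic corrections as a vector $w \in \CC^n$ gives \textbf{(c)}; and the decay on the $N_k v$ pairings translates, after the Tate twist adjustment since $N_k \in \glie^{-1,-1}$ lowers bidegree by $(1,1)$, into the Hodge condition \textbf{(d)}.

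The main obstacle is \textbf{(a)}: using integrality to upgrade convergence of pairings into convergence of $h(m)$ on the nose. The plan is to mimic the estimate from the proof of \lemmaref{lem:fiber} — a uniform bound on $\norm{\widetilde h(m)}$ in terms of $\abs{Q(h(m),h(m))}$ and the relevant pairing norms — but now in the mixed Hodge metric induced by the nilpotent orbit $\Phinil\bigl(z(m)\bigr)$. The bound $\abs{Q(h(m),h(m))}\leq K$ and the pairing estimates from the first step handle the components of $\widetilde h(m)$ dual to $F^1$ and to $N_k F^2$; the delicate part is controlling the components in subspaces $I^{p,q}$ with $p < 0$, on which $Q$ is not positive definite. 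Here one must invoke the asymptotic positivity of the polarization with respect to elements of the cone $C(N_1,\dotsc,N_n)$, and unwind it inductively using the $\SL(2)$-orbit structure together with the horizontality encoded in \propositionref{prop:Gamma}. This last step is where the bulk of the technical work of \cite{CDK} is indispensable, and where the passage from their hypothesis $b(m) = O\bigl(e^{-\alpha \max_j y_j(m)}\bigr)$ to the weaker $O\bigl(e^{-\alpha y_j(m)}\bigr)$ in each coordinate separately will require the most care.
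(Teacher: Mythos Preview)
Your first paragraph is exactly right and matches the paper: the sections $\sigma_v$ and $\sigma_{v,k}$ convert boundedness in $T(F_{-1}\Mmod)$ into the statement that $h(m)\equiv b(m)\bmod\Phi^0\bigl(z(m)\bigr)$ with $b(m)$ bounded and $\norm{N_k b(m)}=O\bigl(e^{-\alpha y_k(m)}\bigr)$; the paper calls such $b(m)$ \emph{harmless} and isolates this as \propositionref{prop:bounded-harmless}. Your derivation of (d) from the $\sigma_{v,k}$-pairings in the limit is also correct.

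There is, however, a genuine gap in your inductive setup. You write that after passing to a subsequence ``one arranges that the ratios $y_1(m)/y_2(m),\dotsc,y_{n-1}(m)/y_n(m)$ are bounded''. This is false in general: take $n=2$ with $y_1(m)=m$, $y_2(m)=m^2$. No reordering and no subsequence makes the ratio bounded, and the $N_j$ are tied to fixed coordinate directions so you cannot simply relabel. The multi-variable $\SL(2)$-orbit theorem of \cite{CKS} does not require bounded ratios; what it requires is an ordering in which the successive ratios tend to $+\infty$. The paper (following \cite{CDK}) handles this by writing $z(m)=iAt(m)+w(m)$ with $t(m)\in\RR^d$ satisfying $t_k(m)/t_{k+1}(m)\to\infty$, grouping the $N_j$ into cone elements $T_k=\sum_j a_{j,k}N_j$, and then running the induction on $d$ (the number of distinct growth rates), not on $n$. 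The inductive step passes to $\gr^{W(T_1)}$, where $T_1$ acts trivially and the value of $d$ drops by one; this is where the ``nilpotent orbit in the variables $\{s_j\}_{j\in J_1}$'' reduction and \propositionref{prop:Gamma} are used. Your sketch conflates $n$ with $d$ and assumes away the multi-rate phenomenon that is the whole point of the rate expansion; without it, the $\sltwo$-triples you invoke are not available in the form you need, and the weight-grading argument for (a) and (b) does not go through.
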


Let us prove that \theoremref{thm:local1} implies \theoremref{thm:main-local}. 
It suffices to show that the closure of the image of $\eps \colon \TZ(K) \to T(F_{-1}
\Mmod)$ is analytic in a neighborhood of any given point in $T(F_{-1} \Mmod)$. After
choosing local coordinates, we may therefore assume without loss of generality that
$X = \Delta^n$ and $X_0 = \dstn{n}$, and consider the behavior of the closure over
the origin. For every $h \in \HZ$, we have a holomorphic mapping
\[
	\epstl_h = \epstl(\argbl, h) \colon \HH^n \to T(F_{-1} \Mmod).
\]
We only need to prove that the closure of the image of $\epstl_h$ is analytic; this is
because, by assertion \ref{en:local1-a} in \theoremref{thm:local1}, any bounded
subset of $T(F_{-1} \Mmod)$ can intersect only finitely many of the sets
$\epstl_h(\HH^n)$. Furthermore, we may assume
that there is a vector $w \in \CC^n$ such that each $N_j h$ is a rational Hodge class
in the mixed Hodge structure $\bigl( W, e^{\sum w_j N_j} F \bigr)(-1)$, and that $a_1
N_1 h + \dotsb + a_n N_n h = 0$ for certain positive integers $a_1, \dotsc, a_n \in
\ZZ$; otherwise, the closure of $\epstl_h(\HH^n)$ does not actually contain any points over
the origin, according to \theoremref{thm:local1}. 

Under these assumptions on $h$, we can prove the stronger result that the image of $p
\circ \epstl_h$ has an analytic closure; here $p \colon T(F_{-1} \Mmod) \to T(F^1
\shHe)$ denotes the holomorphic mapping induced by $F^1 \shHe \into
F_{-1} \Mmod$; see \eqref{eq:mappings}. This suffices to conclude the proof, because
the image of $\epstl_h$ is then contained in the closed analytic subset
\[
	p^{-1} \Bigl( \overline{(p \circ \epstl_h)(\HH^n)} \Bigr).
\]
As $p$ is an isomorphism over $\dstn{n}$, it follows that the closure of
$\epstl_h(\HH^n)$ is also analytic -- in fact, it is a connected component of the above
set. 

\begin{proposition} \label{prop:closure}
Let $h \in \HZ$ be an element with $N_j h \in W_{-2} \cap e^{\sum w_j N_j} F^{-1}$
for all $j$. Then the image of the holomorphic mapping
\[
	\HH^n \to \Delta^n \times \Hom(F^1, \CC), \quad
		z \mapsto \left( s, v \mapsto 
			Q \bigl( h, e^{\sum z_j N_j} e^{\Gamma(s)} v \bigr) \right)
\]
has an analytic closure (where $s_j = e^{2 \pi i z_j}$ as usual).
\end{proposition}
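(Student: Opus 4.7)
The plan is to realize the image as an analytic subset of $\Delta^n \times \Hom(F^1,\CC)$, combining the polynomial structure of the orbit $z \mapsto e^{-\sum z_j N_j} h$ with the exponential relation $s_j = e^{2\pi i z_j}$.

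First, I would observe that since $N_j h \in W_{-2}$ for each $j$, and each $N_k$ sends $W_w$ into $W_{w-2}$ (because $N_k \in \glie^{-1,-1}$), iterated products $N^\alpha h := N_1^{\alpha_1} \dotsm N_n^{\alpha_n} h$ lie in $W_{-2|\alpha|}$ and therefore vanish once $|\alpha|$ exceeds the length of the weight filtration. Consequently,
\[
h(z) := e^{-\sum z_j N_j} h = \sum_\alpha \frac{(-z)^\alpha}{\alpha!}\, N^\alpha h
\]
is a polynomial in $z$ of bounded total degree with values in $\HC$, so that
\[
\phi(z,v) := Q(h(z), e^{\Gamma(s)}v) = \sum_\alpha \frac{(-z)^\alpha}{\alpha!}\, Q(N^\alpha h,\, e^{\Gamma(s)}v)
\]
is polynomial in $z$ with coefficients that are holomorphic functions of $s$ on all of $\Delta^n$.

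Next, I would factor the map through the intermediate space $\Delta^n \times \CC^d$: define $\rho \colon \HH^n \to \Delta^n \times \CC^d$ by $\rho(z) = (e^{2\pi iz}, (z^\alpha)_\alpha)$ and $\sigma \colon \Delta^n \times \CC^d \to \Delta^n \times \Hom(F^1,\CC)$ by $(s,\zeta) \mapsto \bigl(s,\, v \mapsto \sum_\alpha \tfrac{(-1)^{|\alpha|}}{\alpha!} \zeta_\alpha\, Q(N^\alpha h, e^{\Gamma(s)}v)\bigr)$, which is linear in the $\CC^d$-fibers and holomorphic in $s$. The image $\rho(\HH^n)$ is cut out in $\Delta^n \times \CC^d$ by the holomorphic equations $s_j = e^{2\pi i \zeta_{e_j}}$ (where $e_j$ denotes the $j$-th standard basis vector in $\NN^n$) together with the polynomial relations $\zeta_\alpha = \prod_j \zeta_{e_j}^{\alpha_j}$. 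Since $e^{2\pi i \zeta}$ never vanishes, this analytic subset is already closed in $\Delta^n \times \CC^d$ and acquires no points over the boundary $\{s_j = 0\}$.

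The image of the original mapping is $\sigma(\rho(\HH^n))$. To conclude that its closure in $\Delta^n \times \Hom(F^1,\CC)$ is analytic, I would exploit the Hodge-theoretic hypothesis $N_j h \in W_{-2} \cap e^{\sum w_j N_j}F^{-1}$, which pins down the Hodge type of each $N_j h$ in the shifted mixed Hodge structure $(W_{-2}, e^{\sum w_j N_j}F)(-1)$ as purely $(-1,-1)$. Together with the orthogonality $Q(I^{p,q}, I^{p',q'}) = 0$ unless $(p',q') = (-p,-q)$ and \propositionref{prop:Gamma} (commutativity of $N_j$ with $e^{\Gamma(s)}$ along $s_j = 0$), this forces enough vanishings and linear dependencies among the sections $L_s(N^\alpha h) := Q(N^\alpha h, e^{\Gamma(s)}\cdot)$ that $\sigma$, when restricted to $\rho(\HH^n)$, is finite-to-one onto its image. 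Remmert's proper mapping theorem then gives analyticity of the image, while limits over $\{s_j = 0\}$ are controlled by the partial monodromy invariance statement of \theoremref{thm:local1}.

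The main obstacle is this last step: showing that the Hodge-theoretic constraints on the $N^\alpha h$ give exactly the right amount of cancellation to make $\sigma(\rho(\HH^n))$ analytic, rather than a merely closed transcendental subset. This requires a careful $I^{p,q}$-component analysis of the coefficients $Q(N^\alpha h, e^{\Gamma(s)}v)$ and a verification that their vanishing orders along the divisors $\{s_j = 0\}$ match the growth $z_j \sim (2\pi i)^{-1}\log s_j$ of the polynomial factors in $\phi(z,v)$.
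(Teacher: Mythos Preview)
Your factorization through $\rho$ and $\sigma$ is a reasonable way to organize the data, but the argument breaks down at the point where you invoke Remmert's theorem. The restriction $\sigma \restr{\rho(\HH^n)}$ is \emph{not} proper: you yourself observe that $\rho(\HH^n)$ acquires no points over the boundary $\{s_j = 0\}$, yet the closure of $\sigma(\rho(\HH^n))$ in $\Delta^n \times \Hom(F^1,\CC)$ typically does contain points over $s = 0$ (this is precisely why the proposition is interesting). So there exist sequences in $\rho(\HH^n)$ escaping to infinity whose $\sigma$-images remain bounded, and Remmert does not apply. ``Finite-to-one'' alone is not enough, and in any case you have not established it. Your final paragraph effectively concedes this: you say the main obstacle is showing that the Hodge-theoretic constraints give ``exactly the right amount of cancellation,'' but you do not carry this out.

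The paper's proof supplies exactly the structural fact you are missing. After translating so that $w = 0$, the hypothesis becomes $N_j h \in I^{-1,-1}$ for the Deligne splitting of $(W,F)$. This has a sharp consequence: the $(-1,-1)$-component of $(e^{-\sum z_j N_j} - \id)h$ is simply $-\sum z_j N_j h$, which is \emph{linear} in $z$. After an $\SL_n(\ZZ)$ change of coordinates separating a basis $N_1' h, \dotsc, N_r' h$ of the span of the $N_j h$ from the relations, the new variables $z_1', \dotsc, z_r'$ are recovered linearly from $v^{-1,-1}$ via dual functionals $\varphi_1, \dotsc, \varphi_r$. The higher components $v^{-p,-p}$ are then polynomial in $v^{-1,-1}$ (explicit equations), and the remaining variables $z_{r+1}', \dotsc, z_n'$ enter only through the monomial map $(e^{2\pi i z_{r+1}'}, \dotsc, e^{2\pi i z_n'}) \mapsto s$, whose image closure is a toric variety with explicit binomial equations. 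Together these give a finite list of holomorphic equations cutting out the closure. Your general $I^{p,q}$ framework is pointing in the right direction, but without isolating the linearity of the $(-1,-1)$-component and the resulting recovery of the $z_j'$, you cannot write down the equations, and no abstract properness argument will substitute for them.
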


\begin{proof}
Let $\HC = \bigoplus_{p,q} I^{p,q}$ be Deligne's decomposition of the mixed Hodge
structure $\bigl( W, e^{\sum w_j N_j} F \bigr)$. Since
\[
	e^{\sum z_j N_j} e^{\Gamma(s)} = e^{\sum (z_j - w_j) N_j} 
		\left( e^{\sum w_j N_j} e^{\Gamma(s)} e^{-\sum w_j N_j} \right)
			e^{\sum w_j N_j},
\]
we may replace $F$ by $e^{\sum w_j N_j} F$ and $e^{\Gamma(s)}$ by the expression in
parentheses, and assume without essential loss of generality that $w = 0$. We then
have $N_j h \in I^{-1,-1}$ for every $j = 1, \dotsc, n$. Under the isomorphism
\[
	Q \colon \bigoplus_{p \leq -1} I^{p,q} \to \Hom(F^1, \CC)
\]
induced by $Q$, the linear functional $v \mapsto Q \bigl( h, e^{\sum z_j N_j}
e^{\Gamma(s)} v \bigr)$ corresponds to 
\[
	\sum_{p \leq -1} \Bigl( e^{-\Gamma(s)} e^{-\sum z_j N_j} h \Bigr)^{p,q}
	= \sum_{p \leq -1} \Bigl( e^{-\Gamma(s)} h \Bigr)^{p,q} 
		+ e^{-\Gamma(s)} \Bigl( e^{-\sum z_j N_j} - \id \Bigr) h.
\]
Here we have used the fact that $N_j h \in I^{-1,-1}$ and $\Gamma(s) \in \bigoplus_{p
\leq -1} \glie^{p,q}$. Since $\Gamma(s)$ is holomorphic on $\Delta^n$, it is
therefore enough to prove that the image of
\begin{equation} \label{eq:closure-map}
	\CC^n \to \CC^n \times \bigoplus_{p \leq -1} I^{p,p}, \quad
		z \mapsto \Bigl( e^{2 \pi i z_1}, \dotsc, e^{2 \pi i z_n}, 
			\bigl( e^{-\sum z_j N_j} - \id \bigr) h \Bigr)
\end{equation}
has an analytic closure. This is what we are going to do next. We denote by $V$ the direct
sum of the $I^{p,p}$ with $p \leq -1$; given a vector $v \in V$, we write $v^{p,p}$
for its component in the summand $I^{p,p}$.

Let $S(h) = \menge{a \in \ZZ^n}{\sum a_j N_j h = 0}$; note that $\ZZ^n / S(h)$ embeds
into $\HQ$, and is therefore a free $\ZZ$-module, say of rank $r$. We can thus find a
matrix $A \in \SL_n(\ZZ)$ whose last $n-r$ columns give a basis for the submodule
$S(h) \subseteq \ZZ^n$. If we now introduce new coordinates $(z_1', \dotsc,
z_n') \in \CC^n$ by defining
\[
	z_j = \sum_{k=1}^n a_{j,k} z_k' \quad \text{and} \quad
		N_k' = \sum_{j=1}^n a_{j,k} N_j,
\]
we have $z_1 N_1 + \dotsb + z_n N_n = z_1' N_1' + \dotsb + z_n' N_n'$. The vectors
$N_1' h, \dotsc, N_r' h \in I^{-1,-1}$ are linearly independent, while $N_{r+1}' h =
\dotsb = N_n' h = 0$. The mapping in \eqref{eq:closure-map} therefore has the
same image as
\[
	\CC^n \to \CC^n \times V, \quad 
		z' \mapsto \left( \prod_{k=1}^n e^{2 \pi i a_{1,k} z_k'}, \dotsc, 
		\prod_{k=1}^n e^{2 \pi i a_{n,k} z_k'}, 
		\bigl( e^{\sum z_k' N_k'} - \id \bigr) h \right).
\]
Because $N_1' h, \dotsc, N_r' h$ are linearly independent, we
can find linear functionals $\varphi_1, \dotsc, \varphi_r \colon I^{-1,-1} \to \CC$
with the property that
\[
	z_j' = \varphi_j(z_1' N_1' h + \dotsb + z_r' N_r' h).
\]	
Every point $(s, v) \in \CC^n \times V$ in the image therefore satisfies the
polynomial equations
\begin{equation} \label{eq:closure-1}
	v^{-p,-p} = \frac{(-1)^p}{p!} \Bigl( \varphi_1(v^{-1,-1}) N_1' + \dotsb +
		\varphi_r(v^{-1,-1}) N_r' \Bigr)^p h
\end{equation}
for every $p \geq 1$. Moreover, one has
\[
	s_j = \prod_{k=1}^n e^{2 \pi i a_{j,k} z_k'} 
		= \prod_{k=1}^r e^{2 \pi i a_{j,k} \varphi_k(v^{-1,-1})} \cdot
		\prod_{k=r+1}^n \bigl( e^{2 \pi i z_k'} \bigr)^{a_{j,k}}.
\]
Now observe that the closure of the image of the monomial mapping
\[
	(\CCst)^{n-r} \to \CC^n, \quad
		(t_{r+1}, \dotsc, t_n) \mapsto \left( \prod_{k=r+1}^n t_k^{a_{1,k}}, 
			\dotsc, \prod_{k=r+1}^n t_k^{a_{n,k}} \right)
\]
is a closed algebraic subvariety $Z \subseteq \CC^n$, defined by
finitely many polynomial equations. In fact, each of these polynomials can be taken
as a difference of two monomials, and $Z$ is a (possibly not normal) toric variety.
For every polynomial $f(s)$ in the ideal of $Z$, we therefore obtain a holomorphic
equation
\begin{equation} \label{eq:closure-2}
	f \left( s_1 \cdot \prod_{k=1}^r e^{-2 \pi i a_{1,k} \varphi_k(v^{-1,-1})}, \dotsc,
			s_n \cdot \prod_{k=1}^r e^{-2 \pi i a_{n,k} \varphi_k(v^{-1,-1})} \right) = 0
\end{equation}
that is satisfied by every point $(s,v) \in \CC^n \times V$ in the image.
Together, \eqref{eq:closure-1} and \eqref{eq:closure-2} give a system of holomorphic
equations for the closure of the image of the mapping in \eqref{eq:closure-map},
proving that it is indeed an analytic subvariety.
\end{proof}

\subsection{Another reformulation of the problem}

The purpose of this section is to reduce the proof of \theoremref{thm:local1}
to a statement that only involves the variation of Hodge structure $\shH$.
Suppose then that we are given a sequence of points 
\[
	\bigl( z(m), h(m) \bigr) \in \HH^n \times \HZ(K)
\]
with the properties listed in \theoremref{thm:local1}. Observe that $h \in \HZ$ is a
Hodge class in the Hodge structure defined by $\Phi(z)$ if and only if $h \in
\Phi^0(z)$ if and only if $\epstl(z, h) = 0$.
This suggests that the boundedness of $\epstl \bigl( z(m), h(m) \bigr)$ should allow us
to control the distance from $h(m)$ to the subspace $\Phi^0 \bigl( z(m) \bigr)$. To
quantify this idea, we fix an inner product on $\HC$, and let
$\norm{\argbl}$ denote the corresponding norm. 

\begin{definition}
A sequence of vectors $b(m) \in \HC$ is called \define{harmless} with respect
to $y(m)$ if there is a positive real number $\alpha > 0$ such that the quantity
\[
	\norm{b(m)} + \sum_{k=1}^n e^{\alpha y_k(m)} \norm{N_k b(m)}
\]
remains bounded as $m \to \infty$.
\end{definition}

In other words, a sequence $b(m) \in \HC$ is harmless if and only if it is bounded and
$\norm{N_k b(m)}$ is in $O(e^{-\alpha y_k(m)})$ for every
$k = 1, \dotsc, n$. We can now turn the idea from above into a precise statement.

\begin{proposition} \label{prop:bounded-harmless}
If the sequence $\epstl \bigl( z(m), h(m) \bigr) \in T(F_{-1} \Mmod)$ is bounded, then
\begin{equation} \label{eq:h-and-b}
	h(m) \equiv b(m) \mod \Phi^0 \bigl( z(m) \bigr),
\end{equation}
for a sequence of vectors $b(m) \in \HC$ that is harmless with respect to $\Im z(m)$.
\end{proposition}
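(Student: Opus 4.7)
The plan is to unpack the boundedness of $\epstl(z(m), h(m))$ using the sections of $F_{-1}\Mmod$ made explicit in the excerpt, and then construct $b(m)$ by lifting the non-$F^0$ part of $u(m) := e^{-\Gamma(s(m))}\, e^{-\sum_j z_j(m) N_j} h(m)$ back to $\HC$. For the sections $\sigma_v$ with $v \in F^1$, the $Q$-invariance of $e^{\sum_j z_j N_j} e^{\Gamma(s)}$ rewrites the evaluation of $\epstl(z(m), h(m))$ as $Q(u(m), v)$; combined with the polarization identity $(F^1)^\perp = F^0$, this says that $u(m)$ is bounded modulo $F^0$. For the sections $\sigma_{v,k}$ from \lemmaref{lem:sections-Fp} with $v \in F^2$, the evaluation is $s_k(m)^{-1} Q(u(m), N_k v)$; invoking $N_k \in \glie$ and $(F^2)^\perp = F^{-1}$ gives $N_k u(m) = O(\abs{s_k(m)}) = O(e^{-2\pi y_k(m)})$ modulo $F^{-1}$.

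Next I would use the Deligne decomposition $\HC = \bigoplus_{p,q} I^{p,q}$ of the limiting mixed Hodge structure $(W,F)$ to split $u(m) = f(m) + c(m)$ with $f(m) \in F^0$ and $c(m) \in \bigoplus_{p \leq -1} I^{p,q}$; the first bound makes $c(m)$ bounded in $\HC$. Because $N_k \in \glie^{-1,-1}$ sends $I^{p,q}$ into $I^{p-1,q-1}$, it carries $F^0$ into $F^{-1}$ while $N_k c(m)$ lies in $\bigoplus_{p \leq -2} I^{p,q}$, which is transverse to $F^{-1}$. Hence the mod-$F^{-1}$ bound upgrades to $\norm{N_k c(m)} = O(e^{-2\pi y_k(m)})$ for each $k$. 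I then set
\[
	b(m) := e^{\sum_j z_j(m) N_j}\, e^{\Gamma(s(m))}\, c(m),
\]
so that $h(m) - b(m) = e^{\sum_j z_j(m) N_j}\, e^{\Gamma(s(m))}\, f(m) \in \Phi^0(z(m))$ by construction.

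The main obstacle is verifying that $b(m)$ is harmless: the operator $e^{\sum_j z_j(m) N_j}$ has polynomial growth in $y(m)$, which must be beaten by the exponential decay carried by $c(m)$. Since the $N_j$ commute and are nilpotent, I expand $e^{\sum_j z_j(m) N_j} = \sum_\alpha \frac{z(m)^\alpha}{\alpha!}\, N^\alpha$ as a finite sum. A short induction on $|\alpha|$, combining $\norm{N_j c(m)} = O(\abs{s_j(m)})$ with \propositionref{prop:Gamma} (which makes $[N_j, e^{\Gamma(s)}]$ divisible by $s_j$), yields
\[
	\bignorm{N^\alpha \bigl(e^{\Gamma(s(m))}\, c(m)\bigr)} = O\Bigl( \min_{j \in \operatorname{supp}(\alpha)} \abs{s_j(m)} \Bigr) \quad \text{for } |\alpha| \geq 1,
\]
the minimum appearing since $N^\alpha c(m) = N^{\alpha - e_{j_0}} N_{j_0} c(m) = O(\abs{s_{j_0}(m)})$ for any $j_0 \in \operatorname{supp}(\alpha)$.

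To finish, for each $\alpha$ I pick $\ell^\ast \in \operatorname{supp}(\alpha)$ maximizing $y_{\ell^\ast}(m)$, so that $\abs{s_{\ell^\ast}(m)} = e^{-2\pi y_{\ell^\ast}(m)}$ realizes the minimum and $\abs{z(m)^\alpha} \leq C\, y_{\ell^\ast}(m)^{|\alpha|}$. The elementary fact that $y^L e^{-2\pi y}$ is uniformly bounded makes each term of the polynomial expansion of $b(m)$ bounded, so that $\norm{b(m)} = O(1)$ after summing the finitely many $\alpha$. Applying the same analysis to
\[
	N_k b(m) = e^{\sum_j z_j(m) N_j}\, \Bigl( e^{\Gamma(s(m))} N_k c(m) + [N_k, e^{\Gamma(s(m))}]\, c(m) \Bigr),
\]
whose two terms in parentheses are each $O(\abs{s_k(m)})$, and this time choosing $\ell^\ast$ to maximize $y_\ell(m)$ over $\operatorname{supp}(\alpha) \cup \{k\}$ so that $y_{\ell^\ast}(m) \geq y_k(m)$, the sharper inequality $y^L e^{-2\pi y} \leq C_{L,\alpha_0}\, e^{-\alpha_0 y'}$, valid for all $y \geq y' \geq 0$ and any $\alpha_0 \in (0, 2\pi)$, delivers $\norm{N_k b(m)} = O(e^{-\alpha_0 y_k(m)})$. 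This completes the verification that $b(m)$ is harmless.
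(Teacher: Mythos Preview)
Your proof is correct and follows essentially the same route as the paper: define $u(m)=e^{-\Gamma(s(m))}e^{-\sum z_j(m)N_j}h(m)$, use the sections $\sigma_v$ and $\sigma_{v,k}$ to bound its projection $c(m)$ onto $\bigoplus_{p\le -1}I^{p,q}$ and each $N_k c(m)$, and set $b(m)=e^{\sum z_j(m)N_j}e^{\Gamma(s(m))}c(m)$. The only difference is in the final step: the paper packages the verification that $b(m)$ is harmless into \lemmaref{lem:harmless-operators} (applied successively to $e^{\Gamma(s(m))}$ and $e^{\sum z_j(m)N_j}$), whereas you carry out that computation explicitly by expanding the exponential and tracking the $\min_{j}|s_j(m)|$ bounds. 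Your explicit version is in fact more detailed than the paper's somewhat terse proof of that lemma; the trick of choosing $\ell^\ast\in\operatorname{supp}(\alpha)\cup\{k\}$ and using $y^L e^{-2\pi y}\le C e^{-\alpha_0 y'}$ for $y\ge y'$ is exactly what is needed to make the ``slightly smaller value of $\alpha$'' precise.
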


\begin{proof}
We are going to use the collection of holomorphic sections
\begin{align*}
	\sigma_v(z) = e^{\sum z_j N_j} e^{\Gamma(s)} v
		\quad \text{(for $v \in F^1$)}  \\
	\sigma_{v,k}(z) = e^{\sum z_j N_j} e^{\Gamma(s)} \frac{N_k v}{s_k}
		\quad \text{(for $v \in F^2$)}
\end{align*}
of the coherent sheaf $F_{-1} \Mmod$. Define the auxiliary sequence of vectors
\[
	h'(m) = e^{-\Gamma(s(m))} e^{-\sum z_j(m) N_j} h(m) \in \HC.
\]
Using Deligne's decomposition $\HC = \bigoplus_{p,q} I^{p,q}$ of the mixed Hodge
structure $(W, F)$, we also define
\[
	h'(m)_{-1} = \sum_{p \leq -1} h'(m)^{p,q} \in \bigoplus_{p \leq -1} I^{p,q}.
\]
We have $h'(m) \equiv h'(m)_{-1}$ modulo $F^0 = \bigoplus_{p \geq 0} I^{p,q}$, 
and therefore \eqref{eq:h-and-b} holds with
\[
	b(m) = e^{\sum z_j(m) N_j} e^{\Gamma(s(m))} h'(m)_{-1} \in \HC.
\]
It remains to show that $b(m)$ is harmless with respect to the sequence
of imaginary parts $y(m) = \Im z(m)$. By assumption, the sequence of complex numbers
\[
	Q \Bigl( h(m), \sigma_v \bigl( z(m) \bigr) \Bigr)
	= Q \left( h(m), e^{\sum z_j(m) N_j} e^{\Gamma(s(m))} v \right)
	= Q \bigl( h'(m), v \bigr)
\]
is bounded for every $v \in F^1$. Since the pairing $Q$ is nondegenerate and
compatible with Deligne's decomposition, we conclude that $\norm{h'(m)_{-1}}$ is
bounded. Likewise, the boundedness of the sequence
\[
	Q \Bigl( h(m), \sigma_{v,k} \bigl( z(m) \bigr) \Bigr)
	= - Q \left( \frac{N_k h'(m)}{s_k(m)}, v \right)
\]
for every $v \in F^2$ implies that $\norm{N_k h'(m)_{-1}}$ is in $O(e^{-2 \pi
y_k(m)})$. Combining both observations, we find that the sequence $h'(m)_{-1}$ is
harmless (with $\alpha = 2 \pi$). But then $b(m)$ is also harmless (with $\alpha < 2
\pi$) by \lemmaref{lem:harmless-operators} below.
\end{proof}

To summarize, we have reduced \theoremref{thm:local1} to the following slightly
stronger statement. It has the advantage of being expressed purely in terms of $\shH$.

\begin{theorem} \label{thm:local2}
Suppose we are given a sequence of points 
\[
	\bigl( z(m), h(m) \bigr) \in \HH^n \times \HZ(K)
\]
with $x_j(m) = \Re z_j(m)$ bounded and $y_j(m) = \Im z_j(m)$ going to infinity for 
every $j = 1, \dotsc, n$. Also suppose that
\[
	h(m) \equiv b(m) \mod \Phi^0 \bigl( z(m) \bigr)
\]
for a sequence of vectors $b(m) \in \HC$ that is harmless with respect to $y(m)$.
Then there exists a subsequence with the following properties: 
\begin{aenumerate}
\item The sequence $h(m)$ is constant and equal to $h \in \HZ(K)$.
\item There are positive integers $a_1, a_2, \dotsc, a_n$ such that 
\[
	(a_1 N_1 + \dotsb + a_n N_n) h = 0.
\]
\item There is a vector $w \in \CC^n$ such that
\[
	\lim_{m \to \infty} e^{-\sum z_j(m) N_j} h(m) = e^{-\sum w_j N_j} h.
\]
\end{aenumerate}
\end{theorem}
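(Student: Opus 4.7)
The plan is to prove \theoremref{thm:local2} by adapting the induction on $n$ from Cattani, Deligne, and Kaplan's proof of \cite[Theorem~2.16]{CDK}, with the harmless sequence $b(m)$ playing the role of their vanishing error term. The base case $n = 1$ will be the one-variable analysis set aside for \parref{par:one-dim}, so the bulk of the work is to propagate the three conclusions through an inductive step, and to verify that the property of being \emph{harmless} behaves well under the reductions suggested by the multi-variable $\SL(2)$-orbit theorem of \cite{CKS}.

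The first step is to bound $h(m)$ uniformly in a fixed inner product on $\HC$. Decomposing $h(m) = b(m) + r(m)$ with $r(m) \in \Phi^0\bigl(z(m)\bigr)$, the calculation in the proof of \lemmaref{lem:fiber}, applied pointwise to the Hodge structure $\Phi(z(m))$, controls the Hodge norm $\norm{h(m)}_{\Phi(z(m))}$ in terms of $K$ and $\norm{b(m)}_{\Phi(z(m))}$. The $\SL(2)$-orbit theorem allows one to compare this Hodge norm with a split norm that depends polynomially on the $y_j(m)$, with bounded comparison constants; combining this with harmlessness yields a uniform bound on $\norm{h(m)}$ in a fixed reference norm. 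Because $h(m) \in \HZ$, this leaves only finitely many possibilities, and after passing to a subsequence we may take $h(m) = h \in \HZ(K)$ constant, which is assertion (a).

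For assertions (b) and (c), I would study the auxiliary sequence $u(m) = e^{-\sum z_j(m) N_j} h$, written out in Deligne's bigrading of $(W, F)$. Using \eqref{eq:h-and-b} and inverting the power series in $\Gamma(s(m))$ with the help of \propositionref{prop:Gamma}, each component $u(m)^{p,q}$ with $p < 0$ is, up to uniformly bounded transition matrices, a component of $b(m)$, and harmlessness together with the action of the $N_k$ forces these components to tend to zero after extraction. The components with $p \geq 0$ are bounded by the previous step and converge, along a further subsequence, to a limit $u_\infty \in \bigoplus_{p \geq 0} I^{p,q}$. Choosing $w \in \CC^n$ so that $u_\infty = e^{-\sum w_j N_j} h$ yields (c), and (b) follows from the observation that in order for $e^{-\sum z_j(m) N_j} h$ to remain bounded while $y_j(m) \to \infty$, the integer class $h$ must be annihilated by some positive rational combination of the $N_j$; clearing denominators produces the required $a_j \in \ZZ$ with $a_j > 0$.

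The main obstacle, and the reason the argument is not a direct quotation of \cite{CDK}, lies in the inductive step. After rescaling the last variable $z_n(m)$ using an $\SL(2)$-triple adapted to $N_n$ (or, more generally, to an appropriate element of the relative cone), the period mapping factors as a nilpotent orbit in the first $n-1$ variables, conjugated by a matrix depending on $z_n(m)$. One has to verify that the residual sequence $b'(m)$ produced from $b(m)$ by this rescaling is again harmless, now with respect to $y_1(m), \dotsc, y_{n-1}(m)$. This is where \lemmaref{lem:harmless-operators}, invoked already in the proof of \propositionref{prop:bounded-harmless}, does the real work: the rescaling matrices are uniformly bounded and commute with $N_1, \dotsc, N_{n-1}$ up to harmless errors, so the class of harmless sequences is preserved under the reduction. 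Once this is checked, the inductive hypothesis supplies (a), (b), and (c) for the variables $z_1(m), \dotsc, z_{n-1}(m)$, and the one-variable analysis for $z_n(m)$ completes the proof.
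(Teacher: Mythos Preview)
There is a genuine gap in your first step, and it is exactly the difficulty that makes \theoremref{thm:local2} harder than \cite[Theorem~2.16]{CDK}. You correctly observe that the Hodge norm $\norm{h(m)}_{\Phi(z(m))}$ is bounded; but the comparison between the Hodge norm at $\Phi(z(m))$ and a fixed reference norm is \emph{not} by bounded constants. What the $\SL(2)$-orbit theorem gives is that $e(m) \Phi\bigl(z(m)\bigr) \to \Fsh \in D$, where $e(m)$ is the rescaling operator in \eqref{eq:em-def}; hence boundedness of $\norm{h(m)}_{\Phi(z(m))} = \norm{e(m)h(m)}_{e(m)\Phi(z(m))}$ only tells you that $e(m)h(m)$ is bounded. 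Since $e(m)^{-1}$ acts on $\HC^{(\ell_1,\dotsc,\ell_d)}$ as multiplication by $t_1(m)^{-\ell_1/2}\dotsm t_d(m)^{-\ell_d/2}$, the components of $h(m)$ with negative $\ell_j$'s can, and typically do, grow polynomially in the $y_j(m)$. The paper records this explicitly in \propositionref{prop:bounded}: one only gets $\norm{h(m)} \in O\bigl(t_1(m)^N\bigr)$, which is far from enough to pin $h(m)$ down to finitely many lattice points. So assertion (a) cannot be obtained at this stage, and everything you build on it afterwards collapses.

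The paper's proof is organized around this obstruction. It proceeds by induction on $d$ (the number of distinct growth scales in $y(m)$, not on $n$), and the constancy of $h(m)$ is established only at the very end (Step~6), \emph{after} one has already shown that $T_1 h(m), \dotsc, T_d h(m)$ are exponentially small. The heart of the argument is to control the \emph{position} of $h(m)$ relative to the $\ZZ^d$-grading coming from the splittings $Y_1, \dotsc, Y_d$: one first shows $h(m) \in W_0^1$ (Step~3, using \propositionref{prop:weights}), then passes to $\gr_0^{W^1}$ and applies the inductive hypothesis there (Step~4), and finally uses a unit-vector argument (Step~5) to force $T_1 h(m)$ to be exponentially small. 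Only once all the $T_k h(m)$ vanish does one get a genuine uniform bound on $h(m)$ and hence constancy. Your proposed induction on $n$, peeling off $z_n$ via a single $\SL(2)$-triple, does not give access to this graded structure and cannot substitute for it.
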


Here is why \theoremref{thm:local2} implies \theoremref{thm:local1}.
Let $\bigl( z(m), h(m) \bigr)$ be a sequence with the
properties described in \theoremref{thm:local1}. According to
\propositionref{prop:bounded-harmless}, we have $h(m) \equiv b(m) \mod \Phi^0 \bigl( z(m)
\bigr)$ for a harmless sequence $b(m) \in \HC$; the first three assertions in
\theoremref{thm:local1} therefore follow immediately from \theoremref{thm:local2}. To
prove the fourth one, note that $h \in W_0$. Since we also know that
\[
	Q \left( h(m), e^{\sum z_j(m) N_j} e^{\Gamma(s(m))} \frac{N_k v}{s_k(m)} \right)
\]
is bounded for every choice of $v \in F^2$, we can pass to the limit and obtain
\[
	0 = \lim_{m \to \infty} Q \left( e^{-\sum z_j(m) N_j} h(m), 
		e^{\Gamma(s(m))} N_k v \right) 
	= Q \left( e^{-\sum w_j N_j} h, N_k v \right).
\]
Now the properties of $Q$ imply that $N_k h \in e^{\sum w_j N_j} F^{-1}$.

\subsection{Properties of harmless sequences}

This section contains a few elementary results about harmless sequences that will be
useful later.  First, we prove the following structure theorem; to simplify the
notation, we define $y_{n+1}(m) = 0$.

\begin{proposition} \label{prop:harmless}
A harmless sequence can always be written in the form
\[
	b(m) = b_0(m) + b_1(m) + \dotsb + b_n(m),
\]
where $b_k(m) \in \ker N_1 \cap \dotsb \cap \ker N_k$ and $\norm{b_k(m)}$ is in
$O(e^{-\alpha y_{k+1}(m)})$.
\end{proposition}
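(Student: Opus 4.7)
The proof will proceed by induction on $n$, with the base case $n=0$ being trivial: if $b(m)$ is merely bounded, then $b(m)=b_0(m)$ already has the required form, since the convention $y_{n+1}=0$ turns the bound $O(e^{-\alpha y_{n+1}(m)})$ into $O(1)$. The core of the inductive step will be a projection onto $\ker N_1$ to extract $b_0$, followed by an application of the inductive hypothesis to the remainder inside $\ker N_1$.

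A preliminary reduction is needed. By passing to a subsequence of $m$'s on which the relative ordering of $y_1(m),\ldots,y_n(m)$ is independent of $m$, and then performing a single permutation of the commuting operators $N_1,\ldots,N_n$ together with the corresponding $y_j$'s, I may assume without loss of generality that $y_1(m)\geq y_2(m)\geq\cdots\geq y_n(m)$ for every $m$. This relabeling is harmless because the $N_j$'s commute, so the intersections $\ker N_1\cap\cdots\cap\ker N_k$ appearing in the conclusion transform symmetrically and the final statement is unaffected.

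With this ordering in hand, fix a Hermitian inner product on $\HC$ and let $W$ denote the orthogonal complement of $\ker N_1$. Since $N_1$ is injective on $W$, there is a constant $C>0$ with $\norm{x}\leq C\norm{N_1 x}$ for every $x\in W$. Writing $b(m)=b_0(m)+u(m)$ with $b_0(m)\in W$ and $u(m)\in\ker N_1$ gives
\[
	\norm{b_0(m)}\leq C\norm{N_1 b_0(m)}=C\norm{N_1 b(m)}=O(e^{-\alpha y_1(m)}),
\]
which is the required estimate for $b_0$. Because the $N_j$ commute, each preserves $\ker N_1$, so the restricted operators $N_2|_{\ker N_1},\ldots,N_n|_{\ker N_1}$ form a commuting tuple of nilpotent endomorphisms on $\ker N_1$, and the plan is to apply the inductive hypothesis to $u(m)$ on this smaller space.

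Finally, $u(m)$ must be verified to be harmless for the reduced data. For each $k\geq 2$,
\[
	\norm{N_k u(m)}\leq\norm{N_k b(m)}+\norm{N_k}\cdot\norm{b_0(m)}=O(e^{-\alpha y_k(m)})+O(e^{-\alpha y_1(m)}),
\]
and the ordering $y_1(m)\geq y_k(m)$ absorbs the second term into the first. The inductive hypothesis applied to $u(m)$ then produces $u(m)=b_1(m)+\cdots+b_n(m)$ with $b_k(m)\in\ker N_1\cap\cdots\cap\ker N_k$ and $\norm{b_k(m)}=O(e^{-\alpha y_{k+1}(m)})$, and combining this with $b_0$ yields the desired decomposition. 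The main obstacle, and essentially the only subtle point, is the initial reduction to the decreasing ordering of the $y_j$'s; without it, the auxiliary term $O(e^{-\alpha y_1(m)})$ in the bound on $\norm{N_k u(m)}$ cannot be absorbed into the ``harmless'' form for the smaller tuple $(y_2,\ldots,y_n)$, and the induction would stall.
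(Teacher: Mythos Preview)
Your core argument---project orthogonally onto $\ker N_1$, bound the complementary piece by $\lVert N_1 b(m)\rVert$, then recurse inside $\ker N_1$---is exactly what the paper does. The paper packages the projection step as a one-line linear-algebra lemma and then applies it $n$ times in succession; your inductive formulation is the same thing. You have also correctly isolated the one delicate point the paper glosses over: after peeling off $b_0(m)$, the remainder $u(m)$ satisfies only
\[
\lVert N_k u(m)\rVert \;\leq\; \lVert N_k b(m)\rVert + \lVert N_k\rVert\cdot\lVert b_0(m)\rVert
\;=\; O\bigl(e^{-\alpha y_k(m)}\bigr) + O\bigl(e^{-\alpha y_1(m)}\bigr),
\]
and absorbing the second term requires $y_1(m)\geq y_k(m)$.

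The problem is your fix. The conclusion of the proposition is \emph{not} symmetric in the indices: it asks specifically for $b_k(m)\in\ker N_1\cap\cdots\cap\ker N_k$ together with the bound $O(e^{-\alpha y_{k+1}(m)})$, where $1,\dotsc,n$ is the given labelling. If you relabel so that $y_{\sigma(1)}(m)\geq\cdots\geq y_{\sigma(n)}(m)$ and run the induction, what you prove is that $b_k(m)\in\ker N_{\sigma(1)}\cap\cdots\cap\ker N_{\sigma(k)}$ with bound $O(e^{-\alpha y_{\sigma(k+1)}(m)})$, which is a different statement. Your sentence ``the intersections \ldots\ transform symmetrically and the final statement is unaffected'' is false: $\ker N_1\cap\cdots\cap\ker N_k$ is symmetric in $N_1,\dotsc,N_k$, not in the full tuple $N_1,\dotsc,N_n$. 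The subsequence manoeuvre does not help either: on different subsequences you would be proving different permuted versions of the conclusion, and there is no way to glue them into the original one.

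For context, every time the paper actually invokes this proposition it is for the operators $T_1,\dotsc,T_d$ and the sequence $t_1(m),\dotsc,t_d(m)$, which by construction satisfy $t_k(m)/t_{k+1}(m)\to\infty$; in that ordered situation the absorption step is immediate and the iterative argument goes through as written. So your observation is pertinent to the general formulation, but the relabelling does not repair it.
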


In other words, $b_0(m)$ is of size $e^{-\alpha y_1(m)}$; $b_1(m)$ is in the kernel
of $N_1$ and of size $e^{-\alpha y_2(m)}$; and so on, down to $b_n(m)$, which is in the
kernel of all the $N_j$ and bounded. The proof is based on the following simple
result from linear algebra.

\begin{lemma}
Let $T \colon V \to V$ be a linear operator on a finite-dimensional vector space.
Then every $v \in V$ can be written in the form $v = v_0 + v_1$, where $T v_1 = 0$
and $\norm{v_0} \leq C \norm{Tv}$, for a constant $C$ that depends only on $V$, $T$,
and $\norm{\argbl}$.
\end{lemma}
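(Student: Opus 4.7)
The plan is to reduce the statement to the observation that any linear bijection between finite-dimensional normed spaces has a bounded inverse, so that a bound on $\|Tv\|$ translates, modulo $\ker T$, into a bound on $\|v\|$.

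Concretely, I would proceed as follows. First, choose any linear complement $W \subseteq V$ of the subspace $\ker T$, so that $V = \ker T \oplus W$. The restriction $T\vert_W \colon W \to \im T$ is then a linear isomorphism of finite-dimensional normed vector spaces. Because $W$ and $\im T$ are finite-dimensional, the inverse $(T\vert_W)^{-1} \colon \im T \to W$ is continuous, and hence bounded with respect to the restriction of $\|\argbl\|$ to $W$ and to $\im T$; let
\[
    C = \bignorm{(T\vert_W)^{-1}}
\]
denote its operator norm.

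Given any $v \in V$, write $v = v_1 + v_0$ according to the direct sum decomposition, with $v_1 \in \ker T$ and $v_0 \in W$. Then $Tv_1 = 0$, and $Tv_0 = Tv$, so by definition of the operator norm we have
\[
    \norm{v_0} = \bignorm{(T\vert_W)^{-1}(Tv)} \leq C \, \norm{Tv}.
\]
This gives the desired decomposition, with a constant $C$ that depends only on $V$, on $T$ (through its kernel and the chosen complement $W$), and on $\norm{\argbl}$.

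There is no real obstacle here; the only subtlety is that the constant $C$ depends on the choice of complement $W$, but since any two choices give equivalent norms on $V/\ker T$, the statement of the lemma (which only asserts the existence of \emph{some} constant) is unaffected. Alternatively, one could avoid choosing $W$ altogether by equipping $V/\ker T$ with the quotient norm, noting that the induced map $V/\ker T \to \im T$ is a bijection of finite-dimensional normed spaces, and taking $C$ to be the norm of its inverse; then $v_0$ is any lift to $V$ of the image of $v$ in $V/\ker T$ of minimal norm, and $v_1 = v - v_0 \in \ker T$.
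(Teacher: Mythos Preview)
Your proof is correct and essentially the same as the paper's: both decompose $v$ along a complement of $\ker T$ and use that $T$ restricted to that complement is an isomorphism onto $\im T$ with bounded inverse. The only difference is that the paper, noting that $\norm{\argbl}$ comes from an inner product, takes $W = (\ker T)^{\perp}$ specifically, whereas you allow an arbitrary complement.
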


\begin{proof}
Recall that $\norm{\argbl}$ comes from an inner product on $V$. By projecting to
$\ker T$, we get $v = v_0 + v_1$, with $T v_1 = 0$ and $v_0 \perp \ker T$. In
particular, $Tv = T v_0$. Now
\[
	T \colon (\ker T)^{\perp} \to \im T
\]
is an isomorphism, and therefore has an inverse $S$. We then get
\[
	\norm{v_0} = \norm{S(Tv)} \leq C \norm{Tv},
\]
for a constant $C$ that depends only on $V$, $T$, and the choice of norm.
\end{proof}

\begin{proof}[Proof of \propositionref{prop:harmless}]
Since $N_1, \dotsc, N_n$ commute with each other, we can use the lemma and induction.
First, we apply the lemma for $V = \HC$ and $T = N_1$; this gives $b(m) = b_0(m) +
b'(m)$, with $N_1 b'(m) = 0$ and $\norm{b_0(m)}$ in $O(e^{-\alpha y_1(m)})$. In the
next step, we apply the lemma for $V = \ker N_1$ and $T = N_2$ to decompose $b'(m)$,
etc.
\end{proof}

We also need to know that harmless sequences are preserved when we apply certain
operators; this fact has already been used during the proof of
\propositionref{prop:bounded-harmless}.

\begin{lemma} \label{lem:harmless-operators}
Let $\Phi(z) = e^{\sum z_j N_j} e^{\Gamma(s)} F$ be the normal form of a period
mapping on $\HH^n$. If $b(m) \in \HC$ is harmless with respect to $\Im z(m)$, then so
are
\[
	e^{\sum z_j(m) N_j} b(m) \quad \text{and} \quad e^{\Gamma(s(m))} b(m),
\]
provided that $\Im z_1(m), \dotsc, \Im z_n(m)$ are going to infinity.
\end{lemma}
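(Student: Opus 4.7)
The plan is to prove the two assertions by independent arguments of very different character. For $e^{\Gamma(s(m))} b(m)$, the argument is an essentially routine application of \propositionref{prop:Gamma}. Since $\Gamma$ is holomorphic on $\Delta^n$ with $\Gamma(0) = 0$, and $s(m) \to 0$ because the imaginary parts of $z_j(m)$ are going to infinity, the family $e^{\Gamma(s(m))}$ converges to the identity and is in particular uniformly bounded, which immediately gives boundedness of $\norm{e^{\Gamma(s(m))} b(m)}$. For the decay, I would write
\[
	N_k e^{\Gamma(s)} b = e^{\Gamma(s)} (N_k b) + \biglie{N_k}{e^{\Gamma(s)}} b.
\]
The first summand is $O(e^{-\alpha y_k(m)})$ by the harmlessness of $b(m)$. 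For the second, \propositionref{prop:Gamma} tells us that $\lie{N_k}{e^{\Gamma(s)}}$ vanishes along the divisor $s_k = 0$, so its operator norm is $O(\abs{s_k(m)}) = O(e^{-2\pi y_k(m)})$. Altogether, $e^{\Gamma(s(m))} b(m)$ is harmless with decay constant $\min(\alpha, 2\pi)$.

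The second statement is harder, and is where the main work lies. Throughout, I use the fact (inherited from the context of \theoremref{thm:local1}, where the lemma will be applied) that $\Re z_j(m)$ stays bounded, so that $\abs{z_j(m)} \leq C + y_j(m)$. Since the $N_j$'s commute and each is nilpotent, the exponential is a \emph{finite} sum
\[
	e^{\sum z_j N_j} = \sum_{\abs{\beta} \leq D} \tfrac{1}{\beta!} z^\beta N^\beta.
\]
For any $\beta$ and any $j^*$ in the support $\mathrm{supp}(\beta) = \{j : \beta_j \geq 1\}$, commutativity lets me factor $N^\beta = (\text{product of other } N_i\text{'s}) \cdot N_{j^*}$, giving $\norm{N^\beta b(m)} \leq C \norm{N_{j^*} b(m)} = O(e^{-\alpha y_{j^*}(m)})$. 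The key observation is that $j^*$ is mine to choose: picking $j^*$ so that $y_{j^*}$ is maximal in $\mathrm{supp}(\beta)$, I also bound $\abs{z^\beta} \leq C \prod_j (1 + y_j)^{\beta_j} \leq C(1+y_{j^*})^{\abs{\beta}}$. The resulting bound $(1+y_{j^*})^{\abs{\beta}} e^{-\alpha y_{j^*}}$ on each term is uniformly bounded on $[0,\infty)$, and summing over finitely many $\beta$ establishes boundedness of $\norm{e^{\sum z_j(m) N_j} b(m)}$.

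To control $N_k$ applied to $c(m) = e^{\sum z_j(m) N_j} b(m)$, I would use $N_k c = e^{\sum z_j N_j}(N_k b)$ and run the same expansion, now with terms of the form $z^\beta N^{\beta + e_k} b$. Here I would choose $i^*$ to maximize $y_{i^*}$ over the enlarged index set $\{k\} \cup \mathrm{supp}(\beta)$ and factor out $N_{i^*}$ as before. Two cases arise: when $i^* = k$ is the maximizer, the prefactor is bounded by $(1+y_k)^{\abs{\beta}}$ and one gets $O(e^{-\alpha' y_k})$ for any $\alpha' < \alpha$; when instead $y_{i^*} > y_k$, the prefactor is bounded by $(1+y_{i^*})^{\abs{\beta}}$ and, using the elementary estimate $(1+y)^D \leq C_\epsilon e^{\epsilon y}$ together with $y_{i^*} \geq y_k$, one converts $e^{-\alpha y_{i^*}}$ into $O(e^{-(\alpha-\epsilon) y_k})$. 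The main obstacle is precisely this balancing of a polynomial prefactor in \emph{all} the $y_j$'s against exponential decay in only \emph{one} of them; the device of always extracting the $N_{j^*}$ whose $y_{j^*}$ is largest is what closes the estimate, at the modest cost of replacing $\alpha$ by $\alpha-\epsilon$ for an arbitrarily small $\epsilon > 0$.
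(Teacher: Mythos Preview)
Your proposal is correct and follows the same approach as the paper. For $e^{\Gamma(s(m))} b(m)$ your argument is identical to the paper's: bounded operator plus \propositionref{prop:Gamma} for the commutator. For $e^{\sum z_j(m) N_j} b(m)$ the paper simply asserts that since the operator is polynomial in $z_1, \dotsc, z_n$ while $\norm{N_j b(m)}$ decays like $e^{-\alpha y_j(m)}$, harmlessness (with a slightly smaller $\alpha$) is ``clear''; you have unpacked this into a careful term-by-term argument, and your device of factoring out $N_{j^*}$ with $y_{j^*}$ maximal over the support of $\beta$ is exactly what makes the paper's one-line claim rigorous. Your observation that boundedness of $\Re z_j(m)$ is being used implicitly is accurate --- neither the lemma statement nor the paper's proof mentions it, but it holds in every application (it is among the standing hypotheses of \theoremref{thm:local1} and \theoremref{thm:local2}), and without it the polynomial-versus-exponential trade-off could fail.
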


\begin{proof}
Since the operator $e^{\sum z_j N_j}$ is polynomial in $z_1, \dotsc, z_n$, whereas
$\norm{N_j b(m)}$ is in $O(e^{-\alpha \Im z_j(m)})$, it is clear that $e^{\sum z_j(m)
N_j} b(m)$ is again harmless (for a slightly smaller value of $\alpha$). On the other
hand, the operator $e^{\Gamma(s)}$ is holomorphic on $\Delta^n$, and therefore
bounded; moreover, \propositionref{prop:Gamma} shows that the norm of
\[
	N_j e^{\Gamma(s(m))} b(m) - e^{\Gamma(s(m))} N_j b(m)
\]
is bounded by a constant multiple of $\abs{s_j(m)} = e^{-2 \pi \Im z_j(m)}$.
This is clearly enough to conclude that $e^{\Gamma(s(m))} b(m)$ is a harmless
sequence, too.
\end{proof}

Next, we consider the case when the sequence $h(m)$ belongs to certain subspaces. For
any subset $J \subseteq \{1, \dotsc, n\}$, we let $W(J)$ denote the weight filtration
of the cone
\[
	C(J) = \mengge{\sum_{j \in J} a_j N_j}{\text{$a_j > 0$ for every $j \in J$}}.
\]
We would like to know that when $h(m) \in W_w(J)$, we can also take $b(m) \in
W_w(J)$. This requires the following assumption on the period mapping.

\begin{definition} \label{def:nilpotent-J}
Let $J \subseteq \{1, \dotsc, n\}$ be a subset of the index set. We say that
$\Phi(z) = e^{\sum z_j N_j} e^{\Gamma(s)} F$ is a \define{nilpotent orbit in the
variables $\{s_j\}_{j \in J}$} if 
\[
	\frac{\partial \Gamma(s)}{\partial s_j} = 0 \quad \text{for every $j \in J$;}
\]
in other words, if $\Gamma(s)$ does not depend on the variables $\{s_j\}_{j \in J}$.
\end{definition}

The point is that $\Gamma(s)$ then commutes with $N_j$ for $j \in J$ (by
\propositionref{prop:Gamma}), and therefore preserves the weight filtration $W(J)$.
Note that nilpotent orbits in the usual sense are the special case when $J = \{1,
\dotsc, n\}$.

\begin{lemma} \label{lem:harmless-W}
Suppose that $h(m) \equiv b(m) \mod \Phi^0 \bigl( z(m) \bigr)$, with $b(m) \in \HC$
harmless. If $h(m) \in W_w(J)$, and if $\Phi(z)$ is a nilpotent
orbit in the variables $\{s_j\}_{j \in J}$, then one can arrange that $b(m) \in
W_w(J)$ as well.
\end{lemma}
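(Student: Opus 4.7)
The plan is to transport the problem to the limit frame using $e^{-\Gamma(s(m))}e^{-\sum z_j(m)N_j}$, decompose with respect to Deligne's splitting of the mixed Hodge structure $(W(J), F)$, and then reassemble a new representative inside $W_w(J)$.

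I would begin by checking that the transport operator preserves $W(J)$. The nilpotent-orbit hypothesis says $\Gamma(s)$ is independent of the variables $\{s_j\}_{j\in J}$; combined with \propositionref{prop:Gamma}, this forces $[N_j, e^{\Gamma(s)}] = 0$ for every $j\in J$, so $e^{\Gamma(s)}$ commutes with the cone $C(J)$ and hence preserves $W(J)$. The relative monodromy filtration theory of Cattani--Kaplan--Schmid implies $N_k W_w(J)\subseteq W_w(J)$ for every $k$, so $e^{\sum z_j(m)N_j}$ preserves $W(J)$ as well. Setting
\[
b'(m) := e^{-\Gamma(s(m))}e^{-\sum z_j(m) N_j}b(m), \qquad f(m) := e^{-\Gamma(s(m))}e^{-\sum z_j(m) N_j}\bigl(h(m)-b(m)\bigr),
\]
the sequence $b'(m)$ is harmless by \lemmaref{lem:harmless-operators}, $f(m)\in F^0$, and $b'(m)+f(m)\in W_w(J)$. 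Using Deligne's decomposition $\HC=\bigoplus\widetilde I^{p,q}$ of $(W(J),F)$, let $U := \bigoplus_{p+q>w}\widetilde I^{p,q}$, so that $\HC = W_w(J)\oplus U$ and $F^0 = (F^0\cap W_w(J))\oplus(F^0\cap U)$. Decomposing $b'(m) = b'_1(m)+b'_2(m)$ and $f(m) = f_1(m)+f_2(m)$ in these splittings, the condition $b'(m)+f(m)\in W_w(J)$ forces $b'_2(m) = -f_2(m)\in F^0\cap U$. Setting $b_{\mathrm{new}}(m) := e^{\sum z_j(m)N_j}e^{\Gamma(s(m))}b'_1(m) \in W_w(J)$, one computes $h(m)-b_{\mathrm{new}}(m) = e^{\sum z_j(m)N_j}e^{\Gamma(s(m))}f_1(m)\in\Phi^0(z(m))$, which gives the required congruence.

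The main obstacle is verifying that $b_{\mathrm{new}}(m)$ is harmless. By \lemmaref{lem:harmless-operators} this reduces to showing $b'_1(m) = b'(m)-b'_2(m)$ is harmless. Boundedness is immediate, and $\norm{N_k b'(m)} = O(e^{-\alpha y_k(m)})$ by hypothesis, so the delicate point is bounding $\norm{N_k b'_2(m)}$: the projection onto $W_w(J)$ along $U$ does not commute with $N_k$, so this decay is \emph{not} automatic from the harmlessness of $b'(m)$. I would attack this by exploiting the strong constraint $b'_2(m)\in F^0\cap U$ together with the $\SL_2$-orbit splitting of $(W(J),F)$ adapted to the cone $C(J)$, in which every $N_j$ with $j\in J$ becomes a strict weight-$(-2)$ morphism of a refined bigrading; the bound on $N_k b'(m)$ can then be tracked graded piece by graded piece. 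If needed, the argument can be organized as a downward induction on $w$, removing one weight at a time so that at each step only a single strict weight shift must be compensated for, absorbing the correction into $\Phi^0(z(m))$ without losing the exponential decay.
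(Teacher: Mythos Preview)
Your setup is correct through the transport and the splitting $b'(m) = b'_1(m) + b'_2(m)$, and you have correctly located the obstruction: bounding $\norm{N_k b'_2(m)}$ for $k \notin J$. But the gap there is genuine, and your sketched fix does not close it. In the Deligne decomposition of $(W(J), F)$ the operators $N_k$ with $k \notin J$ merely preserve $W(J)$; they are \emph{not} $(-1,-1)$-morphisms of this mixed Hodge structure, so the projection onto $U$ does not intertwine with them in any useful way. Your proposed remedies --- an $\SL_2$-orbit splitting adapted to $C(J)$, or downward induction on $w$ --- still only yield control for $j \in J$, and nothing you have written touches the operators outside $J$.

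The paper avoids the difficulty by using a different mixed Hodge structure. Work instead with the Deligne decomposition $\HC = \bigoplus I^{p,q}$ of the \emph{full} limiting mixed Hodge structure $(W, F)$, where $W = W(N_1, \dotsc, N_n)$. Two facts make this decomposition the right one. First, \emph{every} $N_k$ lies in $\glie^{-1,-1}$ here, so the projection onto $\bigoplus_{p \leq -1} I^{p,q}$ (which is the projection along $F^0$) preserves harmlessness: $N_k$ of the projection is a sum of components of $N_k$ of the whole. Second, since every element of $C(J)$ is a $(-1,-1)$-morphism of $(W,F)$, the weight filtration $W_{\bullet}(J)$ is by sub-mixed Hodge structures of $(W,F)$, hence each $W_w(J)$ is a direct sum of certain $I^{p,q}$. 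Now transport $h(m)$ itself (rather than $b(m)$): the vector $h'(m) = e^{-\Gamma(s(m))} e^{-\sum z_j(m) N_j} h(m)$ lies in $W_w(J)$, so its projection $h'(m)_{-1} = \sum_{p \leq -1} h'(m)^{p,q}$ stays in $W_w(J)$. Since $h'(m) - b'(m) \in F^0$, one has $h'(m)_{-1} = b'(m)_{-1}$, which is harmless by the first fact. Transporting back via \lemmaref{lem:harmless-operators} gives the required harmless representative in $W_w(J)$, with no induction and no $\SL_2$-orbit theory needed.
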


\begin{proof}
This follows from the proof of \propositionref{prop:bounded-harmless}. Because of the
assumption on $\Phi(z)$, the function $\Gamma(s)$ commutes with $N_j$ for $j \in J$
(by \propositionref{prop:Gamma}), and therefore preserves the weight filtration
$W(J)$. Consequently, 
\[
	h'(m) = e^{-\Gamma(s(m))} e^{-\sum z_j(m) N_j} h(m) \in W_w(J).
\]
Because each $N_j$ is a $(-1,-1)$-morphism, $W_w(J)$ is a sub-mixed Hodge
structure of $(W,F)$; it is therefore compatible with Deligne's decomposition, and so
\[
	h'(m)_{-1} = \sum_{p \leq -1} h'(m)^{p,q} \in W_w(J).
\]
At the same time, we clearly have $h'(m) \equiv b'(m) \mod F^0$, which means that
$h'(m)_{-1} = b'(m)_{-1}$ is a harmless sequence. We conclude as before from
\propositionref{prop:bounded-harmless} that $e^{\sum z_j N_j} e^{\Gamma(s)}
h'(m)_{-1} \in W_w(J)$ is harmless.
\end{proof}

We end this section with a word of caution. During the proof of \theoremref{thm:local2}, the
fact that $b(m) \not\in \HR$ causes some trouble. If being harmless was preserved by
the Hodge decomposition for the Hodge structure $\Phi \bigl( z(m) \bigr)$, we could
easily arrange that $b(m) \in \HR$. Unfortunately, this is not the case.

\begin{example}
Let $n = 1$, and consider the special case $\HC = I^{1,1} \oplus
I^{-1,-1}$.  If $b \in I^{-1,-1}$, then $N b = 0$, and so $b$ is harmless. Now we
decompose with respect to 
\[
	e^{i y N} F^0 \oplus e^{-i y N} \overline{F^1}.
\]
A short calculation gives
\[
	b = e^{i y N} \frac{\Np b}{2 i y} - e^{-i y N} \frac{\Np b}{2 i y}
		= \left( \frac{b}{2} + \frac{\Np b}{2 i y} \right) 
			+ \left( \frac{b}{2} - \frac{\Np b}{2 i y} \right) 
\]
and neither of the two components is harmless. The best one can say is that, 
after applying $yN$, they are bounded; this is consistent with
\cite[Proposition~24.3]{Schnell-N}.
\end{example}

\subsection{Proof in the one-dimensional case}
\label{par:one-dim}

In this section, we prove \theoremref{thm:local2} in the special case $n = 1$. This
case is technically easier, because it avoids the complications coming from the
presence of several variables and several nilpotent operators. Because many key
features of the proof are the same as in the general case, it may be useful to
understand them first in this special case.

Suppose then that $\bigl( z(m), h(m) \bigr) \in \HH \times \HZ(K)$ is a sequence of
the type considered in \theoremref{thm:local2}. Fix an inner product on the
space $\HC$, and denote by $\norm{\argbl}$ the corresponding norm. By
\propositionref{prop:harmless}, we can arrange that
\[
	h(m) \equiv b(m) = b_0(m) + b_1(m) \mod \Phi^0 \bigl( z(m) \bigr),
\]
with $\norm{b_0(m)}$ in $O(e^{-\alpha y(m)})$, and $b_1(m) \in \ker N$ bounded. Our
goal is to prove that, after taking a subsequence, $h(m)$ is constant and $N h(m) =
0$.

We first introduce some notation. Let $(W, \Fh)$ denote the $\RR$-split mixed Hodge
structure canonically associated with $(W, F)$ by the $\SL(2)$-orbit theorem
\cite[Corollary~3.15]{CK}. If $Y \in \glieR$ denotes the corresponding splitting, the
eigenspaces $E_{\ell}(Y)$ define a real grading of the weight filtration $W$, meaning
that
\[
	W_k = \bigoplus_{\ell \leq k} E_{\ell}(Y). 
\]
To simplify some of the arguments below, we shall choose the inner product on 
$\HC$ in such a way that this decomposition is orthogonal. The most important tool in
the proof will be the following sequence of real operators:
\[
	e(m) = \exp \left( \frac{1}{2} \log y(m) \cdot Y \right) \in \End(\HR)
\]
Note that $e(m)$ acts as multiplication by $y(m)^{\ell/2}$ on the subspace
$E_{\ell}(Y)$, and preserves the filtration $\Fh$. Because $\lie{Y}{N} = - 2 N$, we
have
\begin{equation} \label{eq:e-N}
	e(m) N = \frac{1}{y(m)} N e(m).
\end{equation}
Since the sequence of real parts $x(m)$ is bounded, \cite[Theorem~4.8]{CK} shows that
\begin{equation} \label{eq:Fsh}
	\Fsh \defeq e^{iN} \Fh = \lim_{m \to \infty} e(m) \Phi \bigl( z(m) \bigr) \in D.
\end{equation}
The filtration $\Fsh$ has two important properties: on the one hand, it belongs to
$D$, and therefore defines a polarized Hodge structure of weight zero on $\HC$; on
the other hand, the pair $(W, \Fsh)$ is a mixed Hodge structure.

We divide the proof of the theorem (in the case $n = 1$) into six steps; each of the six
steps will appear again in a similar form during the proof of the general case.

\paragraph{Step 1}
We prove that $b(m)$ and $h(m)$ are bounded with respect to the Hodge norm. This will
also show that $\norm{h(m)}$ grows at most polynomially in $y(m)$. 

\begin{lemma} \label{lem:HodgeNorm-1}
The two sequences $\norm{b(m)}_{\Phi(z(m))}$ and $\norm{h(m)}_{\Phi(z(m))}$ are bounded. 
\end{lemma}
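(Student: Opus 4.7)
The plan is to first bound $\norm{b(m)}_{\Phi(z(m))}$ and then to bootstrap to a bound on $\norm{h(m)}_{\Phi(z(m))}$ by imitating the Cauchy--Schwarz estimate from the proof of \lemmaref{lem:fiber}, this time in the varying Hodge structure at $\Phi(z(m))$.

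For the first bound I would invoke \propositionref{prop:harmless} to decompose $b(m) = b_0(m) + b_1(m)$, where $\norm{b_0(m)}$ is $O(e^{-\alpha y(m)})$ in the fixed reference norm and $b_1(m) \in \ker N$ is bounded. The contribution of $b_0(m)$ is negligible: since $e(m) \Phi(z(m)) \to \Fsh$ in $D$ by \eqref{eq:Fsh}, one has $\norm{v}_{\Phi(z(m))}^2$ asymptotically equal to $\norm{e(m) v}_{\Fsh}^2$, and $e(m)$ acts on $E_\ell(Y)$ by the scalar $y(m)^{\ell/2}$; this comparison only loses a polynomial factor in $y(m)$, which is defeated by the exponential decay of $b_0(m)$. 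The delicate step is controlling $b_1(m)$: the essential algebraic input is that $\ker N \subseteq \bigoplus_{\ell \leq 0} E_\ell(Y)$, because $N$ is the lowering operator of the $\sltwo$-triple $(\Np, N, Y)$ and the kernel of a lowering operator in any finite-dimensional $\sltwo$-module is spanned by lowest-weight vectors. Combined with the asymptotic above, this gives $\norm{b_1(m)}_{\Phi(z(m))} \lesssim \norm{b_1(m)}$, which is bounded, and hence $\norm{b(m)}_{\Phi(z(m))}$ is bounded.

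For the second bound I would transcribe the pointwise calculation in \lemmaref{lem:fiber} to the filtration $\Phi(z(m))$. Set $v(m) = \sum_{p \geq 1}(-1)^p h(m)^{p,-p} \in \Phi^1(z(m))$, where the Hodge decomposition is taken with respect to $\Phi(z(m))$. Since $h(m) - b(m) \in \Phi^0(z(m))$ and the bilinear relation forces $Q(\Phi^0, \Phi^1) = 0$, one gets $Q(h(m), v(m)) = Q(b(m), v(m))$; reality of $h(m)$ together with the $Q$-orthogonality of distinct Hodge pieces then turns this into $\norm{v(m)}_{\Phi(z(m))}^2 = Q(b(m), v(m))$, so Cauchy--Schwarz gives $\norm{v(m)}_{\Phi(z(m))} \leq \norm{b(m)}_{\Phi(z(m))}$. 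From $h(m) \in \HZ(K)$ one then deduces, exactly as in \lemmaref{lem:fiber}, that $\norm{h(m)}_{\Phi(z(m))}^2 \leq K + 4 \norm{v(m)}_{\Phi(z(m))}^2$, which is bounded.

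The hard part is the bound on $\norm{b_1(m)}_{\Phi(z(m))}$: the rest of the proof is essentially a family-parameter rewrite of \lemmaref{lem:fiber}, but this step requires identifying where $\ker N$ sits inside the $Y$-grading and then using the $SL(2)$-orbit theorem to compare the Hodge norm at $\Phi(z(m))$ with the Hodge norm at the limit filtration $\Fsh$.
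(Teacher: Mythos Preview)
Your proposal is correct and follows essentially the same approach as the paper: decompose $b(m)=b_0(m)+b_1(m)$ via \propositionref{prop:harmless}, use $\ker N\subseteq W_0=\bigoplus_{\ell\le 0}E_\ell(Y)$ together with the $\SL(2)$-orbit convergence \eqref{eq:Fsh} to bound $\norm{b(m)}_{\Phi(z(m))}$, and then bootstrap to $h(m)$ by the computation from \lemmaref{lem:fiber}. One small sharpening worth noting: because $e(m)\in\End(\HR)$ preserves $Q$, one has the \emph{exact} identity $\norm{v}_{\Phi(z(m))}=\norm{e(m)v}_{e(m)\Phi(z(m))}$, so no ``asymptotic comparison losing a polynomial factor'' is needed---the convergence $e(m)\Phi(z(m))\to\Fsh$ in $D$ then gives the bound directly; and in the second step the paper reads off $h(m)^{p,-p}=b(m)^{p,-p}$ for $p\le -1$ immediately from $h(m)-b(m)\in\Phi^0$, which shortcuts your Cauchy--Schwarz detour.
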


\begin{proof}
Recall that the Hodge norm of a vector $h \in \HC$ with respect to the polarized
Hodge structure $\Phi(z) \in D$ is defined as
\[
	\norm{h}_{\Phi(z)}^2 = \sum_{p \in \ZZ} \norm{h^{p,-p}}_{\Phi(z)}^2 
		= \sum_{p \in \ZZ} (-1)^p Q \bigl( h^{p,-p}, \widebar{h^{p,-p}} \bigr),
\]
where $h = \sum h^{p,-p}$ is the Hodge decomposition of $h$ in $\Phi(z)$. We begin the
proof by observing that the sequence $e(m) b(m) = e(m) b_0(m) + e(m) b_1(m)$
is bounded, for the following reason. On the one hand, $b_1(m) \in \ker N \subseteq
W_0$ implies that
\[
	b_1(m) = \sum_{\ell \leq 0} b_1(m)_{\ell} 
		\in \bigoplus_{\ell \leq 0} E_{\ell}(Y);
\]
consequently, the boundedness of $b_1(m)$ implies the boundedness of
\[
	e(m) b_1(m) = \sum_{\ell \leq 0} y(m)^{\ell/2} b_1(m)_{\ell}.
\]
On the other hand, the term $e(m) b_0(m)$ is going to zero, because
$\norm{b_0(m)}$ is in $O(e^{-\alpha y(m)})$, whereas $e(m)$ grows at most
polynomially in $y(m)$. Because $e(m)$ is a real operator, we have
\[
	\norm{b(m)}_{\Phi(z(m))} = \norm{e(m) b(m)}_{e(m) \Phi(z(m))},
\]
which is bounded by virtue of \eqref{eq:Fsh}. Now let 
\[
	h(m) = \sum_{p \in \ZZ} h(m)^{p,-p}
\]
denote the Hodge decomposition of $h(m)$ in the Hodge structure $\Phi(z(m))$. The
difference $h(m) - b(m)$ is an element of $\Phi^0(z(m))$, and for $p \leq -1$, 
\[
	\norm{h(m)^{p,-p}}_{\Phi(z(m))} = \norm{b(m)^{p,-p}}_{\Phi(z(m))}
\]
is bounded. Recalling that $h(m) \in \HZ(K)$, we now have
\begin{align*}
	\norm{h(m)}_{\Phi(z(m))}^2 &= Q \bigl( h(m), h(m) \bigr) + 
		\sum_{p \neq 0} \bigl( 1 - (-1)^p \bigr) \norm{h(m)^{p,-p}}_{\Phi(z(m))}^2 \\
		&\leq K + 4 \norm{b(m)}_{\Phi(z(m))}^2,
\end{align*}
and so the Hodge norm of $h(m)$ is bounded, too.
\end{proof}

\paragraph{Step 2}
Next, we reduce the problem to the case of a nilpotent orbit.
\lemmaref{lem:HodgeNorm-1} gives the boundedness of the sequence $e(m) h(m)$. Since
$e(m)^{-1}$ is polynomial in $y(m)$, it follows that $\norm{h(m)}$ grows at most like a
fixed power of $y(m)$. We have 
\[
	e^{z(m) N} e^{-\Gamma(s(m))} e^{-z(m) N} \bigl( h(m) - b(m) \bigr) 
		\in e^{z(m) N} F^0,
\]
and by using \lemmaref{lem:harmless-operators} and the bounds on $\norm{h(m)}$ and
$\norm{b(m)}$, we see that $h(m)$ is congruent to a harmless sequence modulo
$e^{z(m) N} F^0$. We may therefore assume without loss of generality that $\Phi(z) =
e^{z N} F$ is a nilpotent orbit. Note that we only have $\Phi(z) \in D$ when the
imaginary part of $z \in \HH$ is sufficiently large; this does not cause any problems
because $\Im z(m)$ is going to infinity anyway.

\paragraph{Step 3}
We exploit the boundedness of $e(m) h(m)$ to prove that $h(m) \in W_0$. By passing
to a subsequence, we can arrange that there is a limit
\[
	v = \lim_{m \to \infty} e(m) h(m) \in \HR.
\]
With respect to the eigenspace decomposition of $Y$, we have
\[
	e(m) h(m) = \sum_{\ell \in \ZZ} y(m)^{\ell/2} h(m)_{\ell},
\]
and so $h(m)_{\ell}$ is going to zero when $\ell \geq 1$, and is bounded when $\ell =
0$. Let $\ell \in \ZZ$ be the largest index such that $h(m)_{\ell} \neq 0$ along a
subsequence. The projection from $E_{\ell}(Y)$ to $\gr_{\ell}^W$ is an isomorphism,
and because $h(m) \in \HZ$, it follows that $h(m)_{\ell}$ lies in a discrete
subset of $E_{\ell}(Y)$. This is only possible if $\ell \leq 0$, and hence $h(m) \in
W_0$; moreover, the component $h(m)_0$ takes values in a finite set. After passing to
a subsequence, we may therefore assume that
\[
	h(m) \equiv h_0 \mod W_{-1},
\]
where $h_0 \in E_0(Y)$ is constant.

\paragraph{Step 4}
We prove that $N h_0 = 0$. Consider again the decomposition
\[
	b_1(m) = \sum_{\ell \leq 0} b_1(m)_{\ell},
\]
where $b_1(m)_{\ell} \in E_{\ell}(Y) \cap \ker N$; note that all summands are
bounded, and that only terms with $\ell \leq 0$ appear because $b_1(m) \in \ker N
\subseteq W_0$. Consequently,
\[
	e(m) b_1(m) = b_1(m)_0 + \sum_{\ell \leq -1} y(m)^{\ell/2} b_1(m)_{\ell}
\]
has the same limit as $b_1(m)_0 \in E_0(Y) \cap \ker N$. If we now look back at
\[
	e(m) h(m) \equiv e(m) b_0(m) + e(m) b_1(m) \mod e(m) \Phi^0 \bigl( z(m) \bigr),
\]
we find that all terms converge individually, and hence that
\[
	v = \lim_{m \to \infty} e(m) h(m) \equiv \lim_{m \to \infty} b_1(m)_0  \mod \Fsh^0.
\]
To show that $N v = 0$, we apply the following version of
\cite[Proposition~3.10]{CDK} to the $\RR$-split mixed Hodge structure $(W, \Fh)$,
recalling that $\Fsh = e^{iN} \Fh$. This result is, in a sense, an asymptotic form
of \theoremref{thm:local2}.

\begin{lemma} \label{lem:MHS}
Let $(W, F)$ be an $\RR$-split mixed Hodge structure on $\HC$, and let $N$ be a real
$(-1,-1)$-morphism of $(W, F)$. If a vector $h \in W_{2\ell} \cap \HR$ satisfies 
\[
	h \equiv b \mod e^{iN} F^{\ell}
\]
for some $b \in E_{2\ell}(Y) \cap \ker N$, then $h \in E_{2\ell}(Y) \cap \ker N$.
\end{lemma}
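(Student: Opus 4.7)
My plan is to analyze the unique $g \in F^\ell$ with $h - b = e^{iN} g$, using the hypothesis $h \in W_{2\ell} \cap \HR$ together with the refined decomposition $\HC = \bigoplus_{p,q} I^{p,q}$ afforded by the $\RR$-split structure. Set $f = h - b$ and decompose $g$ by $Y$-weight: $g = \sum_{m} g_m$ with $g_m \in F^\ell \cap E_{2\ell - m}(Y) = \bigoplus_{p \geq \ell,\, p + q = 2\ell - m} I^{p,q}$.

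First I would use $f \in W_{2\ell}$ to conclude $g \in W_{2\ell}$. Because $N$ is a $(-1,-1)$-morphism, $\lie{Y}{N} = -2N$, so $e^{iN}$ preserves the top $Y$-weight component; comparing top weights in $f = e^{iN} g$ forces the components $g_m$ with $m < 0$ to vanish one at a time. Thus $g^{p,q} = 0$ unless $p \geq \ell$ and $p + q \leq 2\ell$. Next I would translate reality: $\overline{f} = f$ is equivalent to $e^{2iN} g = \overline{g}$. The key geometric observation is that $g_m$ lives in cells $I^{p,q}$ with first coordinate $\geq \ell$, while $\overline{g_m}$ lives in cells with second coordinate $\geq \ell$; among cells on the antidiagonal $p + q = 2\ell - m$, these two families meet only when $m = 0$ at the single cell $I^{\ell, \ell}$, and are disjoint for $m \geq 1$.

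Projecting $e^{2iN} g = \overline{g}$ onto $E_{2\ell}$ yields $g_0 = \overline{g_0}$, forcing $g_0 \in I^{\ell, \ell} \cap \HR$. For odd $m \geq 1$ the projection involves only odd-indexed $g_{m-2k}$, and an induction on $m$ using disjointness collapses $(e^{2iN} g)_m = \overline{g_m}$ to $g_m = \overline{g_m}$ and therefore to $g_m = 0$.

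The critical step, and the main obstacle, is the even-$m$ induction: the projection onto $E_{2\ell - 2}$ reads $g_2 + 2iN g_0 = \overline{g_2}$, and here $iN g_0 \in I^{\ell - 1, \ell - 1}$ occupies a third intermediate cell, distinct from the regions where $g_2$ lives (first coordinate $\geq \ell$) and where $\overline{g_2}$ lives (first coordinate $\leq \ell - 2$). This threefold disjointness lets me extract simultaneously $g_2 = 0$ and $N g_0 = 0$. The induction then continues for $m \geq 4$ even, each step using that $N^j g_0 = 0$ and the previously established vanishings to reduce the equation again to $g_m = \overline{g_m}$, hence $g_m = 0$. Putting everything together, $g = g_0 \in I^{\ell, \ell} \cap \HR \cap \ker N$, so $h = b + e^{iN} g_0 = b + g_0$ lies in $E_{2\ell}(Y) \cap \ker N$, as required.
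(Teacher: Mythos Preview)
Your argument has a genuine gap at the point where you ``translate reality.'' You write $f = h - b$ and claim $\overline{f} = f$, but the hypothesis only says $b \in E_{2\ell}(Y) \cap \ker N$, not $b \in \HR$. In the paper's applications the vector $b$ is the limit of a component of a complex-valued harmless sequence, so it is genuinely not real. The correct conjugation identity is
\[
	e^{2iN} g - \overline{g} \;=\; b - \overline{b} \;\in\; E_{2\ell}(Y) \cap \ker N,
\]
and the extra term on the right does not vanish. Consequently your conclusion $g_0 \in I^{\ell,\ell} \cap \HR$ is false in general: projecting to $E_{2\ell}(Y)$ gives $g_0 - \overline{g_0} = b - \overline{b}$, so for $p > \ell$ one finds $g_0^{p,\,2\ell-p} = \overline{b^{\,2\ell-p,\,p}} - b^{p,\,2\ell-p}$, which need not be zero. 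This in turn breaks your ``threefold disjointness'' at $m = 2$, because $N g_0$ is no longer confined to $I^{\ell-1,\ell-1}$.

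The approach is salvageable, and the repair is exactly where you would expect: the unwanted components $g_0^{p,\,2\ell-p}$ with $p > \ell$ are built out of $I^{p,q}$-components of $b$ and $\overline{b}$, so they lie in $\ker N$. Hence $N g_0 = N g_0^{\ell,\ell}$, and combining the corrected $E_{2\ell}$-equation with the $E_{2\ell-2}$-equation (which now reads $g_2 + 2iN g_0 - \overline{g_2} = 0$, since $b - \overline{b}$ has no $E_{2\ell-2}$-component) still yields $N g_0 = 0$ and $g_2 = 0$; the induction then proceeds as you outlined. The paper avoids this bookkeeping entirely: it simply notes that $e^{-iN} h \in E_{2\ell}(Y) + W_{2\ell} \cap F^{\ell}$ and combines this with the conjugate statement (coming from $h \in \HR$) to force first $Nh = 0$ and then $Yh = 2\ell h$, each time by landing in a subspace of the form $W_{k} \cap F^{\ell} \cap \HR$ with $k < 2\ell$.
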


\begin{proof}
If we apply $e^{-iN}$ to both sides and use the fact that $N b = 0$, we get
\[
	e^{-iN} h \in E_{2\ell}(Y) + W_{2\ell} \cap F^{\ell}.
\]
Thus $Nh \in W_{2\ell-2} \cap F^{\ell} \cap \HR$, which can only happen if $Nh = 0$,
because $(W, F)$ is a mixed Hodge structure. But then $Yh - 2\ell h \in W_{2\ell-1} \cap
F^{\ell} \cap \HR$, and so $Yh = 2\ell h$ for the same reason. Alternatively, one
can use the decomposition into the subspaces $I^{p,q} = W_{p+q} \cap F^p \cap
\widebar{F^q}$, which is preserved by $Y$.
\end{proof}

Consequently, $v \in E_0(Y) \cap \ker N$. We can now project the congruence
\[
	e(m) h(m) \equiv h_0 \mod W_{-1}
\]
to the subspace $E_0(Y)$ to conclude that $v = h_0$, and hence that $N h_0 = 0$.

\paragraph{Step 5}
Next, we shall argue that $N h(m) = 0$. Since $N h_0 = 0$, we already know that
$N h(m) \in W_{-3}$. In addition, we have $N b_1(m) = 0$, and so
\[
	e(m) N h(m) \equiv e(m) N b_0(m) \mod N e(m) \Phi^0 \bigl( z(m) \bigr);
\]
here we have used \eqref{eq:e-N} to interchange $N$ and $e(m)$. 

We claim that $\norm{e(m) N h(m)}$ is bounded by a constant multiple of $\norm{e(m) N
b_0(m)}$, and therefore in $O(e^{-\alpha y(m)})$. If not, then the ratios
\[
	\frac{\norm{e(m) N b_0(m)}}{\norm{e(m) N h(m)}}
\]
would be going to zero; after passing to a subsequence, the unit vectors
\[
	\frac{e(m) N h(m)}{\norm{e(m) N h(m)}} \in W_{-3} \cap \HR
\]
would then converge to a unit vector in $W_{-3} \cap N \Fsh^0 \cap \HR \subseteq
W_{-3} \cap \Fsh^{-1} \cap \HR$; but this is not possible because $(W, \Fsh)$ is a
mixed Hodge structure. Consequently, $\norm{e(m) N h(m)}$ is in $O(e^{-\alpha y(m)})$; 
because $e(m)^{-1}$ only grows like a power of $y(m)$, the same is true for the norm
of $N h(m)$. But these vectors lie in a discrete set, and so $N h(m) = 0$ after
passing to a subsequence.

\paragraph{Step 6}
To finish the proof, we have to show that $h(m)$ is bounded. Choose a point $w \in
\HH$ with sufficiently large imaginary part to ensure that $\Phi(w) = e^{w N} F \in
D$; this filtration defines a polarized Hodge structure of weight zero on $\HC$.
Since $\Phi(z)$ is a nilpotent orbit, we then have
\[
	h(m) = e^{w N - z(m) N} h(m) \equiv 
		e^{w N - z(m) N} b_0(m) + b_1(m) \mod \Phi^0(w);
\]
note that both terms on the right-hand side are bounded. This relation shows that,
with respect to the Hodge structure $\Phi(w)$, all the Hodge components
$h(m)^{p,-p}$ with $p \leq -1$ are bounded. Because $h(m) \in \HZ(K)$, we conclude as
in \lemmaref{lem:HodgeNorm-1} that $h(m)$ is bounded in the Hodge norm for $\Phi(w)$,
and therefore bounded. After passing to a subsequence, the sequence $h(m)$ becomes
constant. This completes the proof of \theoremref{thm:local2} in the case $n = 1$.

\subsection{Setup in the general case}

In the remaining sections, we shall prove \theoremref{thm:local2} in general, by
adapting the method in \cite[Section~4]{CDK} to our setting. The proof uses many
results from the theory of degenerating variations of Hodge structures; these will be
introduced in the appropriate places. Thoughout the discussion, we fix an inner
product on $\HC$, and denote by $\norm{\argbl}$ the corresponding norm; we shall make
a more specific choice later on. 

To explain the idea of the proof, let us first consider a sequence $z(m) \in
\HH^n$, with the property that the real parts $x_j(m) = \Re z_j(m)$ are bounded, and
the imaginary parts $y_j(m) = \Im z_j(m)$ are going to infinity.  In the course of
the argument, it will often be necessary to pass to a subsequence; to reduce clutter,
we shall use the same notation for the subsequence. A new feature of the general
case is that we no longer have a unique scale on which we can measure the rate of
growth of a sequence; the reason is that $y_1(m), \dotsc, y_n(m)$ may be going to
infinity at different rates. The most efficient way to deal with this problem is as
follows.

Following \cite[(4.1.3)]{CDK}, we expand the sequence $z(m)$ according to the
rate of growth of its imaginary parts. After passing to a subsequence, we can find an
integer $1 \leq d \leq n$ and an $n \times d$-matrix $A$ with nonnegative real
entries, such that 
\begin{equation} \label{eq:z-t}
	z(m) = i A t(m) + w(m).
\end{equation}
Here $w(m) \in \CC^n$ is a convergent sequence, with the property that $\Phi \bigl(
w(m) \bigr) \in D$ for every $m \in \NN$; and $t(m) \in \RR^d$ has the property
that all the ratios
\[
	\frac{t_1(m)}{t_2(m)}, \frac{t_2(m)}{t_3(m)}, \dotsc, \frac{t_d(m)}{t_{d+1}(m)} 
\]
are going to infinity. (To avoid having to deal with special cases, we always define
$t_{d+1}(m) = 1$.) Moreover, we can partition the index set
\[
	\{1, 2, \dotsc, n\} = J_1 \sqcup J_2 \sqcup \dotsb \sqcup J_d
\]
in such a way that $a_{j,k} \neq 0$ if and only if $j \in J_1 \sqcup \dotsb \sqcup
J_k$. By construction, $\abs{s_j(m)}$ is in $O(e^{-\alpha t_k(m)})$ for every $j
\in J_k$. We define new operators
\[
	T_k = \sum_{j=1}^n a_{j,k} N_j \in C(J_1 \sqcup \dotsb \sqcup J_k)
\]
and have the identity
\[
	\sum_{j=1}^n z_j(m) N_j = \sum_{k=1}^d i t_k(m) T_k 
		+ \sum_{j=1}^n w_j(m) N_j,
\]

Now suppose that $b(m) \in \HC$ is a harmless sequence with respect to $\Im z(m)$. 
By definition, there is some $\alpha > 0$ such that
\[
	\norm{b(m)} + \sum_{j=1}^n e^{\alpha y_j(m)} \norm{N_j b(m)}
\]
is bounded; it is easy to see that the same is true (with a different $\alpha > 0$)
for
\[
	\norm{b(m)} + \sum_{k=1}^d e^{\alpha t_j(m)} \norm{T_j b(m)}.
\]
From now on, we shall use the expression \define{harmless} or \define{harmless with
respect to $t(m)$} to refer to sequences with this property. 

We will prove the following version of \theoremref{thm:local2}.

\begin{theorem} \label{thm:local3}
Suppose we are given a sequence of points 
\[
	\bigl( z(m), h(m) \bigr) \in \HH^n \times \HZ(K),
\]
where $z(m) = i A t(m) + w(m)$ is as above, and $h(m) \equiv b(m) \mod \Phi^0 \bigl( z(m)
\bigr)$ for a sequence of vectors $b(m) \in \HC$ that is harmless with respect to
$t(m)$.  Then after passing to a subsequence, $h(m)$ becomes constant, and $T_1 h(m)
= \dotsb = T_d h(m) = 0$.
\end{theorem}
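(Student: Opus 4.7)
The plan is to prove \theoremref{thm:local3} by induction on $d$, the number of distinct growth scales in the expansion $z(m) = iAt(m) + w(m)$. The base case $d = 1$ is essentially \parref{par:one-dim}: after reducing to a nilpotent orbit via \lemmaref{lem:harmless-operators}, the sequence $\sum z_j(m)N_j$ differs by a bounded term from $it_1(m)T_1$, and the single operator $T_1 = \sum_j a_{j,1} N_j$ plays the role of the operator $N$ from the one-variable argument. Running the six steps of \parref{par:one-dim} verbatim with $N$ replaced by $T_1$ and $y(m)$ replaced by $t_1(m)$ gives a constant subsequence $h(m) = h$ with $T_1 h = 0$.

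For the inductive step, I first reduce to the nilpotent orbit case as in Step~2 of \parref{par:one-dim}, invoking \lemmaref{lem:harmless-operators} together with the polynomial growth bound on $\norm{h(m)}$ that comes from the Hodge-norm estimate. Next I bring in the multi-variable $\SL(2)$-orbit theorem of \cite{CKS}: applied to the nilpotent orbit $e^{\sum z_j N_j}F$ along the direction of fastest growth, it produces an $\RR$-split mixed Hodge structure $(W(T_1), \Fh_1)$ whose splitting operator $Y_1 \in \glieR$ grades $W(T_1)$ and satisfies $\lie{Y_1}{T_1} = -2 T_1$. Define
\[
e_1(m) = \exp\left(\tfrac{1}{2}\log t_1(m) \cdot Y_1\right) \in \End(\HR),
\]
so that $e_1(m)T_1 = t_1(m)^{-1} T_1 e_1(m)$, and so that $e_1(m)\Phi(z(m))$ converges in $D$ to $\Fsh_1 = e^{iT_1}\Fh_1$ as $t_1(m)/t_2(m) \to \infty$.

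Now I mimic Steps 1, 3, 4, 5 of the one-dimensional proof on this fastest scale. Using the decomposition \propositionref{prop:harmless} of the harmless $b(m)$ with respect to $T_1$, the piece in $\ker T_1 \subseteq W_0(T_1)$ is rescaled boundedly by $e_1(m)$ while the complementary piece is crushed exponentially; combined with $h(m) \in \HZ(K)$ and the Hodge-norm argument of \lemmaref{lem:HodgeNorm-1}, this gives boundedness of $e_1(m)h(m)$. The integrality of $h(m)$, together with the fact that integral vectors in $E_\ell(Y_1)$ form a discrete set, forces $h(m) \in W_0(T_1)$ and pins the component in $E_0(Y_1)$ to a constant value $h_0$ along a subsequence. \lemmaref{lem:MHS} applied to $(W(T_1), \Fh_1)$ with $\Fsh_1 = e^{iT_1}\Fh_1$ then yields $T_1 h_0 = 0$, and the bootstrapping argument of Step~5 --- using $W_{-3}(T_1) \cap T_1 \Fsh_1^0 \cap \HR = 0$ since $(W(T_1), \Fsh_1)$ is a mixed Hodge structure --- upgrades this to $T_1 h(m) = 0$ along a further subsequence.

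To close the induction, I restrict to $\ker T_1$ and invoke the hypothesis on the remaining scales $t_2(m), \dots, t_d(m)$. The essential compatibility is provided by \lemmaref{lem:harmless-W}: after the reduction to a nilpotent orbit, $\Phi$ is in particular a nilpotent orbit in $\{s_j\}_{j \in J_1}$, so $b(m)$ may be taken inside $W_0(J_1)$, and its further decomposition respects $\ker T_1$. The restricted period mapping, together with the harmless sequence $b(m)$ now measured against $t_2(m), \dots, t_d(m)$, meets the hypotheses of the inductive case with $d-1$ scales, yielding $T_2 h(m) = \dotsb = T_d h(m) = 0$ and constancy of the subsequence. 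The main obstacle is the one already flagged in the example at the end of \parref{par:one-dim}: because $b(m)$ is only complex, harmlessness is not preserved by the Hodge decomposition with respect to $\Phi(z(m))$, so one cannot simply replace $b(m)$ by its real part; the argument must extract vanishing (rather than mere decay) of rational quantities by repeatedly exploiting the discreteness of $\HZ$, and must verify that the polarized limit filtration $\Fsh_1$ gives rise to a genuine mixed Hodge structure on each graded piece of $W(T_1)$ so that the rigidity lemmas of mixed Hodge theory can be applied at every inductive stage.
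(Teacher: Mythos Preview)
Your inductive scheme has a genuine gap at the rescaling step. You introduce only the single splitting $Y_1$ and the operator $e_1(m) = \exp\bigl(\tfrac{1}{2}\log t_1(m)\cdot Y_1\bigr)$, and assert that $e_1(m)\Phi\bigl(z(m)\bigr)$ converges in $D$. This fails when $d \geq 2$: the remaining scales $t_2(m),\dotsc,t_d(m)$ are still going to infinity, and $e_1(m)$ does nothing to tame them. The relation $[Y_1,T_1]=-2T_1$ lets $e_1(m)$ absorb $e^{it_1(m)T_1}$, but $e_1(m)$ has no useful commutation with $T_2,\dotsc,T_d$ (the operators $\hat T_k$ satisfying $[Y_j,\hat T_k]=-2\delta_{jk}\hat T_k$ differ from $T_k$ for $k\geq 2$). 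Without convergence of $e_1(m)\Phi\bigl(z(m)\bigr)$, the Hodge-norm argument of \lemmaref{lem:HodgeNorm-1} does not yield boundedness of $e_1(m)h(m)$, and your Steps 3--5 have no starting point.

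The paper's proof in \parref{par:proof-n} uses instead the full $\ZZ^d$-grading from $Y_1,\dotsc,Y_d$ and the operator $e(m)$ of \eqref{eq:em-def}, for which \cite[Theorem~4.8]{CK} does give the convergence \eqref{eq:Fsh-n}. The induction is then run on the subquotient $\gr_0^{W^1}$ (\parref{par:induction}), which genuinely carries a polarized variation with $d-1$ scales, rather than on $\ker T_1$, which is not a sub-variation. Even to show $h(m)\in W_0^1$ requires a separate inductive argument (\propositionref{prop:weights}). A further subtlety your sketch misses: Step~4 only yields $T_1 h_0^{(0,\dotsc,0)}=0$, not $T_1 h_0=0$, so the Step~5 contradiction must pin the limit unit vector into the single summand $\HC^{(-2,0,\dotsc,0)}$ by applying \lemmaref{lem:MHS} to each of the $d$ mixed Hodge structures $(W^k,\Fsh)$. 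Finally, $T_1$ has real rather than rational coefficients, so $T_1 h(m)$ is not discrete and you cannot conclude $T_1 h(m)=0$ from exponential smallness alone; the paper obtains this only after constancy of $h(m)$ is already established by induction.
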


\begin{proof}[Proof that \theoremref{thm:local3} implies \theoremref{thm:local2}]
Suppose we are given a sequence of points $\bigl( z(m), h(m) \bigr)$ as in
\theoremref{thm:local2}. As explained above, we can find a subsequence along which
$z(m) = i A t(m) + w(m)$; after passing to a further subsequence, $h(m)$ is constant
and $T_1 h(m) = \dotsb = T_d h(m) = 0$. Let $h \in \HZ$ denote the constant value. We
have
\[
	T_d h = \sum_{j=1}^n a_{j,d} N_j h = 0
\]
for positive real numbers $a_{1,d}, \dotsc, a_{n,d}$. Since $N_j h \in \HQ$, we can
then obviously find positive integers $a_1, \dotsc, a_n$ with the property that $a_1
N_1 h + \dotsb + a_n N_n h = 0$. At the same time,
\[
	\lim_{m \to \infty} e^{-\sum z_j(m) N_j} h(m) = 
		\lim_{m \to \infty} e^{-\sum w_j(m) N_j} h = e^{-\sum w_j N_j} h,
\]
where $w \in \CC^n$ is the limit of the sequence $w(m)$.
\end{proof}

The proof of \theoremref{thm:local3} is organized as follows. In
\parref{par:bounded}, we introduce a common $\ZZ^d$-grading for the weight
filtrations of $T_1, \dotsc, T_d$, and a corresponding sequence of operators $e(m) \in
\End(\HR)$, and show that the boundedness of $Q \bigl( h(m), h(m)
\bigr)$ is equivalent to the boundedness of the sequence $e(m) h(m) \in \HR$.  In
\parref{par:induction}, we show that the subquotients of the weight filtration
$W(T_1)$ again satisfy the assumptions of the theorem. In \parref{par:position}, we
explain how the boundedness of $e(m) h(m)$ can be used to control the position of the
sequence $h(m)$ with respect to the above $\ZZ^d$-grading. The actual proof of the theorem
will be given in \parref{par:proof-n}.

\subsection{Boundedness results}
\label{par:bounded}

The purpose of this section is to translate the boundedness of $Q \bigl( h(m), h(m)
\bigr)$ into a more manageable condition. Since the existence of an integral
structure is not important here, we consider an arbitrary sequence of real vectors
$h(m) \in \HR$, subject only to the condition that
\begin{equation} \label{eq:assumption}
	h(m) \equiv b(m) \mod \Phi^0 \bigl( z(m) \bigr)
\end{equation}
for a harmless sequence $b(m) \in \HC$. By \propositionref{prop:harmless}, we have 
\[
	b(m) = b_0(m) + b_1(m) + \dotsb + b_d(m),
\]
where $b_k(m) \in \ker T_1 \cap \dotsb \cap \ker T_k$ is in $O(e^{-\alpha
t_{k+1}(m)})$. By passing to a subsequence, we can also arrange that $b_d(m)$ converges
to an element of $\HC$. 

\begin{proposition} \label{prop:bounded}
Given \eqref{eq:assumption}, the following statements are equivalent:
\begin{enumerate}
\item The sequence $Q \bigl( h(m), h(m) \bigr)$ is bounded
\item The sequence of Hodge norms $\norm{h(m)}_{\Phi(z(m))}$ is bounded.
\item The sequence $e(m) h(m) \in \HR$ is bounded.
\end{enumerate}
If any of them is satisfied, $\norm{h(m)}$ is in $O(t_1(m)^N)$ for some $N \in \NN$.
\end{proposition}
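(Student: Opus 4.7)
Proof plan. The implication (2) $\Rightarrow$ (1) is immediate, since for a real vector $h$ in a pure Hodge structure of weight zero polarized by $Q$ one has
\[
	Q(h,h) = \sum_p (-1)^p \|h^{p,-p}\|_{\Phi(z)}^2,
\]
so $|Q(h,h)| \le \|h\|_{\Phi(z)}^2$. For the equivalence (2) $\Leftrightarrow$ (3), I would invoke the multi-variable $\SL(2)$-orbit theorem of Cattani, Kaplan, and Schmid to produce commuting real splittings $Y_1, \dots, Y_d$ of the weight filtrations $W(T_1), \dots, W(T_d)$ and the associated sequence of real operators
\[
	e(m) = \exp\!\left( \tfrac{1}{2} \sum_{k=1}^d \log\!\bigl(t_k(m)/t_{k+1}(m)\bigr) Y_k \right) \in \End(\HR),
\]
for which $e(m) \Phi(z(m)) \to \Fsh \in D$ (the multi-variable analogue of \eqref{eq:Fsh}). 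Since $e(m)$ is real, $\|h(m)\|_{\Phi(z(m))} = \|e(m) h(m)\|_{e(m)\Phi(z(m))}$, and the Hodge norms on the right converge to the fixed Hodge norm attached to $\Fsh$; so boundedness of $\|h(m)\|_{\Phi(z(m))}$ is the same as boundedness of $e(m)h(m)$ in any fixed real inner product on $\HR$.

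The substance of the proposition lies in (1) $\Rightarrow$ (2). Repeating the elementary computation from \lemmaref{lem:HodgeNorm-1}, the congruence $h(m) \equiv b(m) \mod \Phi^0(z(m))$ forces $h(m)^{p,-p} = b(m)^{p,-p}$ for every $p \le -1$, and then the reality of $h(m)$ yields
\[
	\|h(m)\|_{\Phi(z(m))}^2 \le Q\bigl(h(m),h(m)\bigr) + 4 \|b(m)\|_{\Phi(z(m))}^2.
\]
So (1) $\Rightarrow$ (2) reduces to showing that $\|b(m)\|_{\Phi(z(m))}$ is bounded, which by the previous step is equivalent to boundedness of $e(m)b(m)$ in a fixed norm.

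The heart of the argument is therefore the bound on $e(m) b(m)$. The plan is to use the decomposition $b(m) = b_0(m) + \dots + b_d(m)$ from \propositionref{prop:harmless}, where $b_k(m) \in \ker T_1 \cap \dots \cap \ker T_k$ and $\|b_k(m)\|$ is in $O(e^{-\alpha t_{k+1}(m)})$, together with the joint eigenspace decomposition of the commuting semisimple operators $Y_1, \dots, Y_d$. The key algebraic input is that for any nilpotent $T$, the kernel $\ker T$ lies in the non-positive part of its weight filtration $W(T)$, because within each irreducible $\sltwo$-submodule the kernel consists of lowest-weight vectors of weight $\le 0$. Applied to the $T_k$, this forces every component of $b_k(m)$ in the joint $\ZZ^d$-grading to have its first $k$ indices $\le 0$. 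For $k < d$, the exponential decay of $b_k(m)$ in $t_{k+1}(m)$ overwhelms the polynomial growth of $e(m)$, so $e(m)b_k(m) \to 0$; for $k = d$, all indices are $\le 0$, and each component of $e(m) b_d(m)$ is multiplied by a product of non-positive powers of the diverging ratios $t_j(m)/t_{j+1}(m)$, hence is bounded.

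The final assertion, $\|h(m)\| = O(t_1(m)^N)$, then follows by writing $h(m) = e(m)^{-1}(e(m) h(m))$: the eigenvalues of $e(m)^{-1}$ are products of non-negative powers of $t_j(m)/t_{j+1}(m)$, each bounded by a fixed power of $t_1(m)$, and $e(m) h(m)$ is bounded by (3). The main obstacle I anticipate is exactly the bound on $e(m) b(m)$ in the third step: one has to fit the several length scales $t_1(m), \dots, t_d(m)$ against the joint $\ZZ^d$-grading arising from the $Y_k$, where the $n = 1$ argument in \lemmaref{lem:HodgeNorm-1} used only a single scale. Once this is in place, the rest of the equivalences and the polynomial growth estimate drop out formally.
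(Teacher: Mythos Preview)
Your proposal is correct and follows essentially the same route as the paper. The paper packages the bound on $e(m)b(m)$ as a separate lemma (\lemmaref{lem:limit-bm}) and proves the slightly stronger statement that $e(m)b(m)$ actually converges to $\lim_m b_d(m)^{(0,\dots,0)}$, which is reused later in Step~4; but for \propositionref{prop:bounded} itself your boundedness argument suffices. One notational caution: in the paper's convention the $\ZZ^d$-grading is by the increments $\ell_k$, with $W_w^k = \bigoplus_{\ell_1+\dots+\ell_k \le w}$, so the constraint coming from $b_k(m)\in\ker T_1\cap\dots\cap\ker T_k$ is that the \emph{partial sums} $\ell_1+\dots+\ell_j\le 0$ for $j\le k$, not the individual $\ell_j$; this is exactly what makes the exponents in the ratio form \eqref{eq:action-e} non-positive for the first $k$ factors and bounded by a power of $t_{k+1}(m)$ for the rest.
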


The proof is based on the existence of a $\ZZ^d$-grading on $\HC$ with good
properties. Before we can define it, we have to recall a few results from the theory
of degenerating variations of Hodge structure. Let $W^k = W(T_k)$ denote the weight
filtration of the nilpotent operator $T_k$; it agrees with that of the cone $C(J_1
\sqcup \dotsb \sqcup J_k)$, and is therefore defined over $\QQ$, even though $T_k$ is
only defined over $\RR$. The multi-variable $\SL(2)$-orbit theorem
\cite[Theorem~4.3]{CK} associates with 
\[
	\bigl( W, F, T_1, \dotsc, T_d \bigr)
\]
a sequence of mutually commuting splittings $Y_1, \dotsc, Y_d \in \End(\HR)$.
Their common eigenspaces define a real $\ZZ^d$-grading 
\[
	\HC = \bigoplus_{\ell \in \ZZ^d} \HC^{(\ell_1, \dotsc, \ell_d)}
\]
of the vector space $\HC$ (and also of $\HR$), with the property that
\[
	W_w^k 
	= \bigoplus_{\ell_1 + \dotsb + \ell_k \leq w} \HC^{(\ell_1, \dotsc, \ell_d)}.
\]
Given a vector $h \in \HC$, we denote its component in the subspace $\HC^{(\ell_1,
\dotsc, \ell_d)}$ by the symbol $h^{(\ell_1, \dotsc, \ell_d)}$. To simplify some
arguments below, we shall assume that the norm $\norm{\argbl}$ comes from an inner
product for which the decomposition is orthogonal. As in the one-variable case, we
then define a sequence of operators
\begin{equation} \label{eq:em-def}
	e(m) = \exp \left( \frac{1}{2} \sum_{k=1}^d t_k(m) Y_k \right) \in \End(\HR);
\end{equation}
note that $e(m)$ acts on the subspace $\HC^{(\ell_1, \dotsc, \ell_d)}$ as
multiplication by
\begin{equation} \label{eq:action-e}
\begin{split}
	t_1(m)^{\ell_1/2} &t_2(m)^{\ell_2/2} \dotsm t_d(m)^{\ell_d/2} \\
		&= \left( \frac{t_1(m)}{t_2(m)} \right)^{\ell_1/2} 
		\left( \frac{t_2(m)}{t_3(m)} \right)^{(\ell_1 + \ell_2)/2} \dotsm
		\left( \frac{t_d(m)}{t_{d+1}(m)} \right)^{(\ell_1 + \dotsb + \ell_d)/2}
\end{split}
\end{equation}
What makes these operators useful is that the filtrations $e(m) \Phi \bigl( z(m)
\bigr)$ have a well-defined limit, which is again a polarized Hodge structure. In
other words,
\begin{equation} \label{eq:Fsh-n}
	\Fsh = \lim_{m \to \infty} e(m) \Phi \bigl( z(m) \bigr) \in D.
\end{equation}
This is explained in \cite[Theorem~4.8]{CK}, and depends on the fact that $w(m)$ is
bounded and all the ratios $t_k(m) / t_{k+1}(m)$ are going to infinity. This is one
reason for using an expansion of the form $z(m) = i A t(m) + w(m)$.

The multi-variable $\SL(2)$-orbit theorem gives some additional information about the
filtration $\Fsh$. According to \cite[Theorem~4.3]{CK}, there are nilpotent
operators $\Th_k \in C(J_1 \sqcup \dotsb \sqcup J_k)$, with the property that
\[
	\lie{Y_j}{\Th_k} = \begin{cases}
		- 2 \Th_k &\text{if $j = k$,} \\
			0 &\text{otherwise;}
	\end{cases}
\]
note that $\Th_1 = T_1$. In this notation, each of the $d$ pairs
\[
	\bigl( W^k, e^{-i(\Th_1 + \dotsb + \Th_k)} \Fsh \bigr)
\]
defines an $\RR$-split mixed Hodge structure on $\HC$, whose associated grading is
given by $Y_1 + \dotsb + Y_k$ \cite[Theorem~4.3]{CK}. In particular, every $(W^k,
\Fsh)$ is itself a mixed Hodge structure; this fact will be important later.

We now turn to the proof of \propositionref{prop:bounded}. As in the one-variable
case, we first study the effect of the operator $e(m)$ on the harmless sequence $b(m)$.

\begin{lemma} \label{lem:limit-bm}
Suppose that $b(m) \in \HC$ is a harmless sequence. Then
\[
	\lim_{m \to \infty} e(m) b(m) = \lim_{m \to \infty} b_d(m)^{(0, \dotsc, 0)},
\]
and the limit belongs to $\HC^{(0, \dotsc, 0)} \cap \ker T_1 \cap \dotsb \cap \ker
T_d$.
\end{lemma}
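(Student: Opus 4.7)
The plan is to analyze the action of $e(m)$ separately on each summand in the harmless decomposition $b(m) = b_0(m) + b_1(m) + \dotsb + b_d(m)$ supplied by the $T$-analogue of \propositionref{prop:harmless}. Recall from that decomposition that $b_k(m) \in \ker T_1 \cap \dotsb \cap \ker T_k$ with $\norm{b_k(m)} = O(e^{-\alpha t_{k+1}(m)})$ for $k < d$, while $b_d(m)$ is merely bounded and, after passing to a subsequence, converges to some $b_d(\infty)$. The key input is that the kernel of any nilpotent operator is contained in the weight-zero part of its weight filtration, so $b_k(m) \in W^1_0 \cap \dotsb \cap W^k_0$; in the $\ZZ^d$-grading this means the nonzero components $b_k(m)^{(\ell_1,\dotsc,\ell_d)}$ occur only for indices with $\ell_1 + \dotsb + \ell_j \leq 0$ for $j = 1, \dotsc, k$.

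For the terms with $k < d$, I would show $e(m) b_k(m) \to 0$. Using the telescoping formula \eqref{eq:action-e}, the first $k$ factors acting on any allowed component $\HC^{(\ell_1,\dotsc,\ell_d)}$ have nonpositive exponent and are therefore each at most $1$. The remaining $d-k$ factors are controlled by a fixed power of $t_{k+1}(m)$ (because $t_j \leq t_{k+1}$ for $j > k$ and the grading indices range over a finite set). Combining this polynomial bound with the exponential decay $\norm{b_k(m)} = O(e^{-\alpha t_{k+1}(m)})$ gives $\norm{e(m) b_k(m)} = O(t_{k+1}(m)^N e^{-\alpha t_{k+1}(m)}) \to 0$.

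For the top term $k = d$, the containment $b_d(m) \in W^1_0 \cap \dotsb \cap W^d_0$ forces each telescoping factor $(t_j/t_{j+1})^{(\ell_1+\dotsb+\ell_j)/2}$ to have nonpositive exponent, hence to tend to $0$ unless the exponent is exactly zero. The product of all $d$ factors therefore tends to $1$ only when every partial sum $\ell_1 + \dotsb + \ell_j$ vanishes, i.e., only on the component $\HC^{(0,\dotsc,0)}$; on every other component it tends to $0$. Since $b_d(m)^{(0,\dotsc,0)}$ converges, this proves
\[
\lim_{m \to \infty} e(m) b_d(m) = \lim_{m \to \infty} b_d(m)^{(0,\dotsc,0)} \in \HC^{(0,\dotsc,0)}.
\]
Adding up the cases gives the equality claimed in the lemma.

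The subtle point, which I expect to be the main obstacle, is upgrading the limit to membership in $\ker T_1 \cap \dotsb \cap \ker T_d$. It does not suffice to invoke that $b_d(\infty) \in \ker T_1 \cap \dotsb \cap \ker T_d$, since the projection onto $\HC^{(0,\dotsc,0)}$ is not \textit{a priori} compatible with $\ker T_k$: the operator $T_k$ is not homogeneous under the full $\ZZ^d$-grading determined by $Y_1, \dotsc, Y_d$, only the normalized operator $\hat T_k$ of \cite[Theorem~4.3]{CK} is. To circumvent this, I would decompose $T_k$ into its $(Y_1, \dotsc, Y_d)$-homogeneous pieces via the multi-variable $\SL(2)$-orbit theorem, observe that the shift vectors of these pieces are distinct, and deduce from $T_k b_d(\infty) = 0$ that the component in each graded piece vanishes; applying this component-wise identification to $b_d(\infty)^{(0,\dotsc,0)}$ gives $T_k b_d(\infty)^{(0,\dotsc,0)} = 0$ for every $k = 1, \dotsc, d$.
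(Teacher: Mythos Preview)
Your computation of the limit formula is correct and matches the paper's argument essentially verbatim: decompose $b(m)$ via \propositionref{prop:harmless}, use $\ker T_1 \cap \dotsb \cap \ker T_k \subseteq W_0^1 \cap \dotsb \cap W_0^k$ to control the telescoping factors in \eqref{eq:action-e}, and let the exponential decay kill every term with $k < d$.

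The subtlety you flag in the final paragraph is real, but your proposed resolution does not work. Writing $T_k = \sum_\mu T_k^{(\mu)}$ for the graded pieces, the condition $T_k b_d(\infty) = 0$ only gives, for each fixed $\ell$, the relation $\sum_\mu T_k^{(\mu)} b_d(\infty)^{(\ell-\mu)} = 0$; this couples several components of $b_d(\infty)$ together, and the distinctness of the shift vectors does not let you separate them. In particular you cannot conclude that each $T_k^{(\mu)} b_d(\infty)^{(0,\dotsc,0)}$ vanishes individually, which is what $T_k b_d(\infty)^{(0,\dotsc,0)} = 0$ would require.

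For $k = 1$ there is no difficulty: the paper records in \parref{par:bounded} that $\Th_1 = T_1$, so $T_1$ is homogeneous of degree $(-2,0,\dotsc,0)$, $\ker T_1$ is graded, and $b_d(m)^{(0,\dotsc,0)} \in \ker T_1$ follows at once. This is in fact the only case invoked later in the paper: in Step~4 of \parref{par:proof-n}, \lemmaref{lem:MHS} is applied to $(W^1, e^{-iT_1} \Fsh)$, which requires only that the limit lie in $E_0(Y_1) \cap \ker T_1$. The paper's own proof of the lemma does not spell out the kernel assertion for $k \geq 2$ either. If you want the full statement you must bring in more structure: one has $[Y_j, T_k] = 0$ for $j > k$ (because $T_k$ lies in the closure of every larger cone and hence shifts each $W^j$ by $-2$, forcing $[Y_{k+1} + \dotsb + Y_j, T_k] = 0$), but the behaviour of $T_k$ under $Y_1, \dotsc, Y_{k-1}$ is genuinely mixed, and sorting this out takes a finer argument than the one you sketched.
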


\begin{proof}
Since $b(m)$ is harmless with respect to $t(m)$, it is easy to see that $e(m) b(m)$
is bounded. Indeed, we have
\[
	e(m) b(m) = e(m) b_0(m) + e(m) b_1(m) + \dotsb + e(m) b_d(m).
\]
Now $\norm{b_k(m)}$ is in $O(e^{-\alpha t_{k+1}(m)})$, and so the same is true for each of
the components in the decomposition
\[
	b_k(m) = \sum_{\ell \in \ZZ^d} b_k(m)^{(\ell_1, \dotsc, \ell_d)}.
\]
On the other hand, $b_k(m)$ is in $\ker T_1 \cap \dotsb \cap \ker T_k \subseteq
W_0^1 \cap \dotsb \cap W_0^k$; this means that $b_k(m)^{(\ell_1,
\dotsc, \ell_d)} = 0$ unless $\ell_1 \leq 0$, $\ell_1 + \ell_2 \leq 0$, and so on up
to $\ell_1 + \dotsb + \ell_k \leq 0$. It follows from this and \eqref{eq:action-e}
that
\[
	e(m) b_k(m) = \sum_{\ell \in \ZZ^d} 
		t_1(m)^{\ell_1/2} \dotsm t_d(m)^{\ell_d/2} \cdot b_k(m)^{(\ell_1, \dotsc, \ell_d)}
\]
is going to zero for $k = 0, \dotsc, d-1$, and converges for $k = d$. This implies
the asserted formula for the limit. 
\end{proof}

\begin{proof}[Proof of \propositionref{prop:bounded}]
By the previous lemma, the sequence $e(m) b(m)$ converges, and so
\[
	\norm{b(m)}_{\Phi(z(m))} = \norm{e(m) b(m)}_{e(m) \Phi(z(m))}
\]
is bounded by virtue of \eqref{eq:Fsh-n}. With respect to the Hodge structure $\Phi
\bigl( z(m) \bigr)$,
\[
	\widebar{h(m)^{-p,p}} = h(m)^{p,-p} = b(m)^{p,-p}
\]
for every $p \leq -1$. This gives us a bound on the difference
\[
	\norm{h(m)}_{\Phi(z(m))}^2 - Q \bigl( h(m), h(m) \bigr) 
		= \sum_{p \neq 0} \bigl( 1 - (-1)^p \bigr) \norm{h(m)^{p,-p}}_{\Phi(z(m))}^2;
\]
the boundedness of $Q \bigl( h(m), h(m) \bigr)$ is therefore equivalent to
the boundedness of $\norm{h(m)}_{\Phi(z(m))}$. Because we also have
\[
	\norm{e(m) h(m)}_{e(m) \Phi(z(m))} = \norm{h(m)}_{\Phi(z(m))},
\]
both conditions are equivalent to the boundedness of the sequence $e(m) h(m)$.
The last assertion follows from the fact that the operator $e(m)^{-1}$ depends
polynomially on $t_1(m), \dotsc, t_d(m)$. 
\end{proof}

\subsection{Mechanism of the induction}
\label{par:induction}

The proof of \theoremref{thm:local3} is by induction on $d \geq 1$. One situation
where we can potentially apply the inductive hypothesis is for a subquotient of the form
\[
	\HCtl = \gr_{\ell_1}^{W^1} = W_{\ell_1}^1 / W_{\ell_1 - 1}^1;
\]
the point is that $T_1$ acts trivially on the quotient. In this section, we show that
under a certain assumption on the period mapping $\Phi(z)$, the quotient again
supports a polarized variation of $\ZZ$-Hodge structure of weight $\ell_1$. 

Denote by $\Ftl$ the filtration on $\HCtl$ induced by $F$.  We
also write $\Ntl_j$, $\Ttl_k$, and $\Ytl_k$ for the operators induced by $N_j$,
$T_k$, and $Y_k$, respectively, and $\Wtl^k$ for the filtration induced by $W^k$; then
\[
	\Wtl^k = W(\Ttl_k) \decal{-\ell_1}
\]
because $W^k$ is the relative weight filtration of $T_k$ on $W^1$ by
\cite[Theorem~2.9]{CK}. The operators $\Ytl_2, \dotsc, \Ytl_d$ define a
$\ZZ^{d-1}$-grading on $\HCtl$, which is compatible with the $\ZZ^d$-grading on
$\HC$; in fact, the projection
\[
	\HC^{(\ell_1, \ell_2, \dotsc, \ell_d)} \to \HCtl^{(\ell_2, \dotsc, \ell_d)}
\]
is an isomorphism. As in \eqref{eq:em-def}, we define a sequence of operators
\[
	\etl(m) = \exp \left( \frac{1}{2} \sum_{k=2}^d t_k(m) \Ytl_k \right) \in
		\End(\HRtl).
\]
Finally, let $\norm{\argbl}$ denote the norm on $\HCtl$ induced by the isomorphism
$E_{\ell_1}(Y_1) \simeq \HCtl$.

\begin{proposition} \label{prop:induction}
Suppose that $\Phi(z)$ is a nilpotent orbit in the variables $\{s_j\}_{j \in J_1}$,
in the sense of \definitionref{def:nilpotent-J}. Then
\[
	\Phitl(z) = e^{\sum_{j \not\in J_1} z_j \Ntl_j} e^{\Gammatl(s)} \Ftl
\]
defines a polarized variation of $\ZZ$-Hodge structure of weight $\ell_1$ on $\HCtl$.
\end{proposition}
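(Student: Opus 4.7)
The plan is to verify the axioms of a polarized variation of Hodge structure on the subquotient $\HCtl = \gr_{\ell_1}^{W^1}$ one piece at a time; the nilpotent-orbit hypothesis is precisely what lets every ingredient descend.

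First, since $\Phi$ is a nilpotent orbit in $\{s_j\}_{j \in J_1}$, the map $\Gamma(s)$ depends only on the coordinates $(s_j)_{j \notin J_1}$. By \propositionref{prop:Gamma}, $\Gamma(s)$ then commutes with each $N_j$ for $j \in J_1$, and in particular with $T_1$; hence $e^{\Gamma(s)}$ preserves $W^1 = W(T_1)$ and induces a holomorphic map $e^{\Gammatl(s)}$ on $\HCtl$. For $j \notin J_1$ the operator $N_j$ commutes with $T_1$, so it likewise preserves $W^1$ and induces $\Ntl_j$ on the quotient. This shows that the formula for $\Phitl(z)$ defines a holomorphic family of filtrations on $\HCtl$ depending only on $(z_j)_{j \notin J_1}$, and that the monodromy of the prospective local system along the boundary divisors $s_j = 0$, $j \notin J_1$, is $e^{\Ntl_j}$, while $T_1$ (and hence the monodromy in the $J_1$-directions) acts trivially on $\HCtl$.

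Next I would install the remaining data. The integral lattice is $\HZtl = (W^1_{\ell_1} \cap \HZ)/(W^1_{\ell_1 - 1} \cap \HZ)$ modulo torsion; this is a free $\ZZ$-module of the correct rank because $W^1$ is defined over $\QQ$. The polarization $Q_{\ell_1}$ on $\HCtl$ is Schmid's primitive-part form built from $Q$ and $T_1$, namely $Q_{\ell_1}(u, v) = Q(u, T_1^{\ell_1} \tilde{v})$ on the primitive part for $\ell_1 \geq 0$ (and the analogous recipe on the negative side), extended to all of $\gr^{W^1}_{\ell_1}$ via the Lefschetz-type decomposition. The Hodge-structure axioms for $(\Wtl, \Ftl, Q_{\ell_1})$ at the reference filtration are the content of the polarized mixed Hodge structure theorem applied to the limit data $(W^1, F, T_1)$, which guarantees that each graded piece $\gr^{W^1}_{\ell_1}$ carries a pure Hodge structure of weight $\ell_1$ polarized by $Q_{\ell_1}$. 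Griffiths transversality for $\Phitl$ is inherited from that of $\Phi$ by passing to the quotient $\HC \to \HCtl$, and the first Hodge-Riemann bilinear relation is preserved throughout because $e^{\sum z_j \Ntl_j}$ and $e^{\Gammatl(s)}$ act by $Q_{\ell_1}$-isometries.

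The main obstacle is verifying that $\Phitl(z)$ actually lands in the period domain of polarized Hodge structures — i.e., the second Hodge-Riemann relation — for $(z_j)_{j \notin J_1}$ with imaginary parts sufficiently large. I would deduce this from the nilpotent-orbit theorem \cite[Theorem~2.1]{CK} applied to $\bigl( (\Ntl_j)_{j \notin J_1}, \Ftl, Q_{\ell_1} \bigr)$ once polarizability at the reference filtration is in hand; the holomorphic factor $e^{\Gammatl(s)}$, which satisfies $\Gammatl(0) = 0$, perturbs the nilpotent orbit by a term going to zero at the boundary and hence does not disturb the open condition of lying in $D$. Integrality of the monodromy operators on $\HZtl$ is automatic by construction, and this completes the identification of $\Phitl$ with a polarized variation of $\ZZ$-Hodge structure of weight $\ell_1$ on $\HCtl$.
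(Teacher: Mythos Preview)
Your argument is correct in outline but takes a longer route than the paper. Rather than separately establishing the second Hodge--Riemann relation via the nilpotent-orbit theorem for $\bigl((\Ntl_j)_{j\notin J_1},\Ftl\bigr)$ and then perturbing by $e^{\Gammatl(s)}$, the paper invokes \cite[Proposition~2.10]{CK} directly: for each $z$ with sufficiently large imaginary parts, the pair $\bigl(W^1,\, e^{\sum_{j \notin J_1} z_j N_j} e^{\Gamma(s)} F\bigr)$ is already a mixed Hodge structure polarized by $Q$ and every element of the cone $C(J_1)$. Passing to $\gr^{W^1}_{\ell_1}$ then yields, pointwise, a polarized Hodge structure of weight $\ell_1$, so there is nothing further to check---no separate perturbation argument is needed. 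Two small repairs your version would require: the form $Q_{\ell_1}$ should be built from a \emph{rational} element of $C(J_1)$ rather than from $T_1$ itself (whose coefficients $a_{j,1}$ are only real), so that the polarization is defined over~$\QQ$; and your appeal to \cite[Theorem~2.1]{CK} points in the wrong direction---that theorem produces a nilpotent orbit from a period mapping, whereas you need the converse statement that a nilpotent orbit with polarized limit takes values in $D$, which is again part of the content of \cite[Proposition~2.10]{CK}.
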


\begin{proof}
We first explain how $\Gammatl(s)$ is defined. For $j \in J_1$, the operator
$\Gamma(s)$ does not depend on $s_j$, and therefore commutes with $N_j$ by
\propositionref{prop:Gamma}. Consequently, $\Gamma(s)$ preserves the weight
filtration $W^1 = W(J_1)$, and therefore induces a similar operator $\Gammatl(s)$ on
$\HCtl$. By \cite[Proposition~2.10]{CK}, the pair
\[
	\bigl( W^1, e^{\sum_{j \not\in J_1} z_j N_j} e^{\Gamma(s)} \bigr)
\]
is a mixed Hodge structure, polarized by the form $Q$ and every element of the cone
$C(J_1)$, in the sense of \cite[Definition~1.16]{CK}. Choose an arbitrary rational
element in $C(J_1)$; together with the Lefschetz decomposition \cite[(1.11)]{CK} for
this element, $Q$ gives rise to a bilinear form $\Qtl \colon \HQtl \tensor \HQtl \to
\QQ(-\ell_1)$. If we define the integral structure $\HZtl$ by taking the image of
$\HZ$, the induced period mapping
\[
	\Phitl(z) = e^{\sum_{j \not\in J_1} z_j \Ntl_j} e^{\Gammatl(s)} \Ftl
\]
gives a variation of $\ZZ$-Hodge structure of weight $\ell_1$ on $\HCtl$,
polarized by $\Qtl$. 
\end{proof}

We note that this construction reduces the value of $d$, in the following sense.

\begin{corollary} \label{cor:induction}
Notation being as above, $\Phitl \bigl( z(m) \bigr)$ only depends on 
\[
	\ztl(m) = i A \ttl(m) + w(m),
\]
where $\ttl(m) = \bigl( 0, t_2(m), \dotsc, t_d(m) \bigr)$, and $A$ and $w(m)$ are as
in \eqref{eq:z-t}.
\end{corollary}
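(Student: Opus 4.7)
The plan is a direct bookkeeping argument: I would show that both factors in the formula
\[
	\Phitl(z) = e^{\sum_{j \not\in J_1} z_j \Ntl_j} e^{\Gammatl(s)} \Ftl
\]
depend only on the coordinates $z_j$ with $j \not\in J_1$, and then check that, for such indices $j$, the value $z_j(m)$ is unchanged when one replaces $t(m)$ by $\ttl(m) = (0, t_2(m), \dotsc, t_d(m))$.

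For the first step, the exponential factor manifestly involves only $z_j$ with $j \not\in J_1$. For the second factor, the hypothesis that $\Phi(z)$ is a nilpotent orbit in the variables $\{s_j\}_{j \in J_1}$ means, by \definitionref{def:nilpotent-J}, that $\Gamma(s)$ is independent of $s_j$ for $j \in J_1$; since $\Gammatl(s)$ is defined by passing to the quotient $\HCtl = \gr_{\ell_1}^{W^1}$ as in the proof of \propositionref{prop:induction}, it inherits this independence. As $s_j = e^{2 \pi i z_j}$, the whole expression $\Phitl(z)$ is determined by the values of $z_j$ for $j \not\in J_1$.

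For the second step, recall from \parref{par:bounded} that the partition $\{1, \dotsc, n\} = J_1 \sqcup \dotsb \sqcup J_d$ was arranged so that $a_{j,k} \neq 0$ if and only if $j \in J_1 \sqcup \dotsb \sqcup J_k$. Taking $k = 1$, this gives $a_{j,1} = 0$ for every $j \not\in J_1$. Writing out the expansion \eqref{eq:z-t} for such $j$ yields
\[
	z_j(m) = i \sum_{k=1}^d a_{j,k} t_k(m) + w_j(m) = i \sum_{k=2}^d a_{j,k} t_k(m) + w_j(m),
\]
which is exactly the $j$-th entry of $\ztl(m) = i A \ttl(m) + w(m)$ because $\ttl_1(m) = 0$. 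Combining the two steps gives $\Phitl \bigl( z(m) \bigr) = \Phitl \bigl( \ztl(m) \bigr)$, proving the corollary. There is no real obstacle here: this is essentially a compatibility check confirming that the construction of \propositionref{prop:induction} strips off the variable $t_1(m)$ and so effectively reduces $d$ by one, which is what makes the induction on $d$ in \theoremref{thm:local3} possible.
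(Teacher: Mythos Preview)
Your proof is correct and follows exactly the same two-step argument as the paper's proof: first observe that $\Phitl(z)$ depends only on the variables $\{z_j\}_{j \notin J_1}$, then check that $z_j(m) = \ztl_j(m)$ for such $j$ because $a_{j,1} = 0$. The paper compresses this into two sentences, while you have spelled out the bookkeeping in full; the only minor slip is that the partition condition on $a_{j,k}$ is introduced in the setup section near \eqref{eq:z-t}, not in \parref{par:bounded}.
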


\begin{proof}
Because $\Phi(z)$ is a nilpotent orbit in the variables $\{s_j\}_{j \in J_1}$, it is
clear that $\Phitl(z)$ only depends on the variables $\{z_j\}_{j \not\in J_1}$; but
$z_j(m) = \ztl_j(m)$ for $j \not\in J_1$.
\end{proof}

Now suppose that $\ell_1 = 0$, so that we are again dealing with a polarized
variation of $\ZZ$-Hodge structure of weight zero. Suppose we have a sequence $h(m)
\in \HR$ with
\[
	h(m) \equiv b(m) \mod \Phi^0 \bigl( z(m) \bigr)
\]
for a harmless sequence $b(m) \in \HC$. Let $\htl(m) \in \HRtl$ denote the image of
$h(m)$; note that $\htl(m) \in \HZtl$ if the initial sequence satisfies $h(m) \in \HZ$.
\lemmaref{lem:harmless-W} allows us to assume that the harmless sequence $b(m)$ lies
in $W_{\ell_1}^1$; consequently, 
\[
	\htl(m) \equiv \btl(m) \mod \Phitl^0 \bigl( z(m) \bigr)
\]
for a harmless sequence $\btl(m) \in \HCtl$. The sequence $\htl(m)$ automatically 
inherits the following boundedness property from $h(m)$.

\begin{lemma} 
If $Q \bigl( h(m), h(m) \bigr)$ is bounded, then $\Qtl \bigl( \htl(m), \htl(m)
\bigr)$ is also bounded.
\end{lemma}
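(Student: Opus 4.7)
The plan is to apply Proposition~\propositionref{prop:bounded} twice and transfer boundedness through the orthogonal $\ZZ^d$-grading of $\HC$. First, by Proposition~\propositionref{prop:bounded} applied to $h(m)$, the hypothesis that $Q \bigl( h(m), h(m) \bigr)$ is bounded is equivalent to the boundedness of the sequence $e(m) h(m) \in \HR$.

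Next I would compare $e(m)$ with $\etl(m)$ through the natural identification of gradings. As noted in \parref{par:induction}, the projection $\HC^{(0, \ell_2, \dotsc, \ell_d)} \to \HCtl^{(\ell_2, \dotsc, \ell_d)}$ is an isomorphism, and the norm on $\HCtl$ was chosen precisely so that this identification is isometric on $E_0(Y_1)$. Comparing the formula \eqref{eq:action-e} for the action of $e(m)$ on $\HC^{(0, \ell_2, \dotsc, \ell_d)}$ with the analogous formula for $\etl(m)$ on $\HCtl^{(\ell_2, \dotsc, \ell_d)}$, one sees that both act as multiplication by $t_2(m)^{\ell_2/2} \dotsm t_d(m)^{\ell_d/2}$. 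Consequently $\etl(m) \htl(m)$ is identified with the sum of those components of $e(m) h(m)$ whose first index is zero. Since the $\ZZ^d$-grading is orthogonal with respect to $\norm{\argbl}$, we obtain $\norm{\etl(m) \htl(m)} \leq \norm{e(m) h(m)}$, so $\etl(m) \htl(m)$ is bounded.

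Finally I would run Proposition~\propositionref{prop:bounded} in reverse for the sequence $\htl(m)$ and the variation $\Phitl$ from Proposition~\propositionref{prop:induction}. The congruence $\htl(m) \equiv \btl(m) \mod \Phitl^0 \bigl( z(m) \bigr)$ with $\btl(m) \in \HCtl$ harmless has been arranged in the paragraph preceding the lemma (via Lemma~\lemmaref{lem:harmless-W}), and Corollary~\corollaryref{cor:induction} shows that $\Phitl$ fits the setup of the proposition with the reduced variable $\ttl(m)$ and with the operators $\Ttl_2, \dotsc, \Ttl_d$ playing the role of $T_1, \dotsc, T_{d-1}$. The equivalence in Proposition~\propositionref{prop:bounded}, now for $\HCtl$, therefore converts the boundedness of $\etl(m) \htl(m)$ into that of $\Qtl \bigl( \htl(m), \htl(m) \bigr)$, as claimed.

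The only mildly delicate step is the identification of $\etl(m) \htl(m)$ with the $\ell_1 = 0$ piece of $e(m) h(m)$, but this should be purely formal once one unwinds the compatibility of the splittings $Y_1, \dotsc, Y_d$ with $\Ytl_2, \dotsc, \Ytl_d$ and of the induced inner products. No new estimates or Hodge-theoretic input seem to be required beyond the two results cited above.
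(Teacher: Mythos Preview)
Your proof is correct and follows essentially the same route as the paper's. The paper states the key inequality slightly more generally as $\norm{\etl(m)\htl(m)} \leq t_1(m)^{-\ell_1/2}\norm{e(m)h(m)}$ and then observes it yields boundedness for $\ell_1 \geq 0$, but in the case $\ell_1 = 0$ at hand this is exactly your orthogonal-projection estimate; the two applications of \propositionref{prop:bounded} are the same as well.
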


\begin{proof}
By \propositionref{prop:bounded}, the assertion is equivalent to the boundedness of
the sequence $\etl(m) \htl(m)$; note that this requires $\ell_1 = 0$. But clearly
\[
	\norm{\etl(m) \htl(m)} \leq t_1(m)^{-\ell_1/2} \norm{e(m) h(m)},
\]
which is bounded as long as $\ell_1 \geq 0$. 
\end{proof}

\subsection{Position relative to the $\ZZ^d$-grading}
\label{par:position}

Here we present a streamlined version of \cite[Lemma~4.4 and Lemma~4.5]{CDK}. We
relax the condition on the weight, and only assume that $\Phi(z)$ is the period
mapping of a polarized variation of $\ZZ$-Hodge structure of weight $w \geq 0$. We
also fix a sequence $z(m) = i A t(m) + w(m)$ as in \eqref{eq:z-t}, and consider on
$\HC$ the $\ZZ^d$-grading defined by $Y_1, \dotsc, Y_d$.

\begin{definition}
The \define{position} of a sequence $h(m) \in \HR$ relative to the $\ZZ^d$-grading is
the largest multi-index $(\ell_1, \dotsc, \ell_d) \in \ZZ^d$ (in the lexicographic
ordering) with the property that $h(m)^{(\ell_1, \dotsc, \ell_d)} \neq 0$ for
infinitely many $m \in \NN$.
\end{definition}

Now suppose we are given a sequence $h(m) \in \HR$ that is in the position $(\ell_1,
\dotsc, \ell_d)$ relative to the $\ZZ^d$-grading defined by $Y_1, \dotsc, Y_d$.
Assume moreover that 
\[
	\norm{h(m)^{(\ell_1, \dotsc, \ell_d)}} \geq \eps
\]
for a positive constant $\eps > 0$; this replaces the condition that $h(m) \in \HZ$.
Our goal is to show that if $t_1(m)^{w/2} \cdot e(m) h(m)$ is bounded, then
$h(m)$ must be in the position $(-w, 0, \dotsc, 0)$ relative to the $\ZZ^d$-grading.

\begin{proposition} \label{prop:weights}
Suppose that we have $h(m) \equiv b(m) \mod \Phi^0 \bigl( z(m) \bigr)$ for a sequence 
$b(m) \in \HC$ with $\norm{b(m)}$ in $O(e^{-\alpha t_1(m)})$.
If $t_1(m)^w \norm{e(m) h(m)}^2$ is bounded, then $w + \ell_1 = \ell_2 = \dotsb =
\ell_d = 0$.
\end{proposition}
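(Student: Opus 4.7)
The plan is to derive matching upper and lower bounds on each partial sum $L_k := \ell_1 + \cdots + \ell_k$ for $k = 1, \ldots, d$, namely $w + L_k \leq 0$ and $w + L_k \geq 0$; together they force $w + \ell_1 = 0$ and $\ell_k = 0$ for $k \geq 2$.

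\emph{Upper bounds.} Since the $\ZZ^d$-decomposition of $\HC$ is orthogonal, $\norm{e(m) h(m)}^2$ dominates the squared norm of its $(\ell_1, \ldots, \ell_d)$-component, and by \eqref{eq:action-e} together with the hypothesis $\norm{h(m)^{(\ell_1, \ldots, \ell_d)}} \geq \eps$,
\[
t_1(m)^w \norm{e(m) h(m)}^2 \;\geq\; \eps^2 \prod_{k=1}^d \Bigl( \tfrac{t_k(m)}{t_{k+1}(m)} \Bigr)^{w+L_k}.
\]
After passing to a subsequence along which each ratio $t_k(m)/t_{k+1}(m)$ grows much faster than any polynomial in $t_{k+1}(m)/t_{k+2}(m), \dotsc, t_d(m)$ (arranged by diagonal extraction), boundedness of the left-hand side forces $w + L_k \leq 0$ for every $k$.

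\emph{Lower bounds via a limit in $\Fsh^0$.} Set $\mu(m) := t_1(m)^{(w+\ell_1)/2} t_2(m)^{\ell_2/2} \cdots t_d(m)^{\ell_d/2}$, the scalar by which $t_1(m)^{w/2} e(m)$ acts on $\HC^{(\ell_1, \ldots, \ell_d)}$, and consider the rescaled sequence $g(m) := \mu(m)^{-1} t_1(m)^{w/2} e(m) h(m)$. Its top component is $h(m)^{(\ell_1, \ldots, \ell_d)}$, of norm $\geq \eps$. Because $\mu(m)^{-1} t_1(m)^{w/2}$ is scalar and preserves every subspace,
\[
g(m) - \mu(m)^{-1} t_1(m)^{w/2} e(m) b(m) \;\in\; e(m) \Phi^0 \bigl( z(m) \bigr);
\]
the subtracted quantity tends to zero because the scalar grows only polynomially in $t_1(m), \ldots, t_d(m)$ (the exponents $w + L_k$ being non-positive) while $e(m) b(m)$ decays exponentially in $t_1(m)$. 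By the convergence $e(m) \Phi^0(z(m)) \to \Fsh^0$ coming from \eqref{eq:Fsh-n}, after passing to a further diagonal subsequence along which every lex-lower position of $h(m)$ contributes zero in the limit, one obtains $g(m) \to v$ with $v \in \HC^{(\ell_1, \ldots, \ell_d)} \cap \Fsh^0 \cap \HR$ and $v \neq 0$.

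\emph{Finishing via positivity.} For each $k$, the image of $v$ under the projection $W^k_{L_k} \to \gr^{W^k}_{L_k}$ is nonzero, real, and lies in the level-zero piece of the pure Hodge structure of weight $L_k$ induced on $\gr^{W^k}_{L_k}$ by the MHS $(W^k, \Fsh)$. The standard positivity of such a polarized Hodge structure (the polarization coming from $Q$ via the Lefschetz decomposition associated to an element of $C(J_1 \sqcup \cdots \sqcup J_k)$) rules out a nonzero real element of $F^0$ unless $L_k + w \geq 0$, yielding the matching lower bound. The principal technical obstacle is the diagonal subsequence argument: one must arrange the rates of divergence of $t_k(m)/t_{k+1}(m)$ so that every contribution from components at lex positions strictly below $(\ell_1, \ldots, \ell_d)$ vanishes in the limit defining $v$, and one must correctly track the weight shift introduced by the Lefschetz decomposition when invoking positivity on each $\gr^{W^k}_{L_k}$.
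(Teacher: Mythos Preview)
Your argument has a genuine gap, and it appears in both the ``upper bound'' and ``lower bound'' steps: you cannot control the relative growth rates of the ratios $r_k(m) = t_k(m)/t_{k+1}(m)$ by passing to a subsequence. The sequence $t(m)$ is \emph{given}; all you know is that each $r_k(m) \to \infty$. If, say, $r_1(m) = r_2(m)$ for every $m$, then no diagonal extraction will ever make $r_1$ dominate a polynomial in $r_2$. Concretely, with $d=2$ and $r_1(m)=r_2(m)=m$, the inequality
\[
	t_1(m)^w \norm{e(m) h(m)}^2 \;\geq\; \eps^2 \, r_1(m)^{\,w+L_1} r_2(m)^{\,w+L_2}
\]
only gives $(w+L_1)+(w+L_2) \leq 0$; it does not force $w+L_1 \leq 0$ and $w+L_2 \leq 0$ separately. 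The same obstruction breaks your construction of $v$: the rescaled vectors $g(m) = \mu(m)^{-1} t_1(m)^{w/2} e(m) h(m)$ need not converge, because the contributions from components at positions $(\ell_1',\dotsc,\ell_d')$ lexicographically below $(\ell_1,\dotsc,\ell_d)$ are weighted by $\prod_k r_k(m)^{(L_k'-L_k)/2}$, and without control on the relative sizes of the $r_k(m)$ there is no reason these go to zero.

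The paper circumvents this by an \emph{induction on $d$}. One first normalizes by $\norm{e(m)h(m)}$ (not by the scalar $\mu(m)$) and passes to a limit; this gives a nonzero real vector in $W^1_{\ell_1} \cap \Fsh^0$, and the mixed Hodge structure $(W^1,\Fsh)$ forces $w+\ell_1 \geq 0$. One then replaces $\Phi$ by a nilpotent orbit in the variables $\{s_j\}_{j \in J_1}$ (so that the projection to $\gr^{W^1}$ is compatible with the period mapping) and projects $h(m)$ to $\gr^{W^1}_{\ell_1}$, which carries a polarized variation of weight $w+\ell_1$ with one fewer monodromy operator. The inductive hypothesis then yields $(w+\ell_1)+\ell_2 = \ell_3 = \dotsb = \ell_d = 0$, and only \emph{after} this does the product estimate
\[
	t_1(m)^w \norm{e(m) h(m)}^2 \;\geq\; \eps^2 \bigl( t_1(m)/t_2(m) \bigr)^{w+\ell_1}
\]
collapse to a single ratio and force $w+\ell_1 \leq 0$. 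In other words, the paper peels off one $r_k$ at a time precisely because the joint product of the $r_k$'s carries too little information; your attempt to read off all the $w+L_k$ simultaneously is exactly what the inductive structure is designed to avoid.
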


\begin{proof}
The key observation is that the ratios
\[
	\frac{\norm{e(m) b(m)}^2}{\norm{e(m) h(m)}^2}
\]
are going to zero. Indeed, $\norm{e(m) b(m)}^2$ is in $O(e^{-2\alpha t_1(m)})$,
whereas $\norm{e(m) h(m)}^2$ is bounded from below by
\[
	t_1(m)^{\ell_1} \dotsm t_d(m)^{\ell_d} \norm{h(m)^{(\ell_1, \dotsc, \ell_d)}}^2
		\geq \eps^2 \cdot t_1(m)^{\ell_1} \dotsm t_d(m)^{\ell_d}.
\]
The unit vectors $\norm{e(m) h(m)}^{-1} \cdot e(m) h(m)$ therefore converge to a unit
vector in $W_{w + \ell_1}^1 \cap \Fsh^0 \cap \HR$, and so $w + \ell_1 \geq 0$ because
$(W^1, \Fsh)$ is a mixed Hodge structure.

Because of the bound on $e(m) h(m)$, we know that $\norm{h(m)}$ grows at most like a
power of $t_1(m)$. We can therefore assume that $\Phi(z)$ is a nilpotent orbit in the
variables $\{s_j\}_{j \in J_1}$ (by \lemmaref{lem:nilpotent-J} below), and that $b(m)
\in W_{\ell_1}^1$ (by \lemmaref{lem:harmless-W}). We now project the sequence to
$\HCtl = \gr_{w + \ell_1}^{W^1}$, which carries a polarized variation of Hodge
structure of weight $w + \ell_1$ by \propositionref{prop:induction}. The new sequence
$\htl(m)$ is in the position $(\ell_2, \dotsc, \ell_d)$ relative to the
$\ZZ^{d-1}$-grading on $\HCtl$, and exponentially close to $\Phitl^0 \bigl( z(m)
\bigr)$.  Moreover, the expression
\[
	t_2(m)^{w+\ell_1} \norm{\etl(m) \htl(m)}^2 
	\leq t_1(m)^{w+\ell_1} \norm{\etl(m) \htl(m)}^2
	\leq t_1(m)^w \norm{e(m) h(m)}^2
\]
is bounded (because $w + \ell_1 \geq 0$), and we still have
\[
	\norm{\htl(m)^{(\ell_2, \dotsc, \ell_d)}} 
		= \norm{h(m)^{(\ell_1, \dotsc, \ell_d)}}
		\geq \eps.
\]
By induction, $w + \ell_1 + \ell_2 = \ell_3 = \dotsb = \ell_d = 0$. But now we get
\[
	t_1(m)^w \norm{e(m) h(m)}^2 \geq 
		\eps^2 t_1(m)^{w+\ell_1} t_2(m)^{\ell_2} \dotsm t_d(m)^{\ell_n}
		= \eps^2 \left( \frac{t_1(m)}{t_2(m)} \right)^{w+\ell_1}.
\]
This can only be bounded if $w + \ell_1 = 0$, and hence $\ell_2 = 0$.
\end{proof}

The following lemma was used during the proof; it will make another appearance when
we prove \theoremref{thm:local3}. We put the period mapping into the standard
form $\Phi(z) = e^{\sum z_j N_j} e^{\Gamma(s)} F$. Let $\Gamma_1(s)$ denote the
result of setting all the variables $\{s_j\}_{j \in J_1}$ in $\Gamma(s)$ to zero, and
define $\Phi_1(z) = e^{\sum z_j N_j} e^{\Gamma_1(s)} F$, which is now a nilpotent
orbit in the variables $\{s_j\}_{j \in J_1}$.

\begin{lemma} \label{lem:nilpotent-J}
Suppose that $h(m)$ is congruent, modulo $\Phi^0 \bigl( z(m) \bigr)$, to a
sequence that is harmless with respect to $t(m)$. If $\norm{h(m)}$ is in
$O(t_1(m)^N)$ for some $N \in \NN$, then the same is true modulo
$\Phi_1^0 \bigl( z(m) \bigr)$.
\end{lemma}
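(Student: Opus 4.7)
The plan is to replace $e^{\Gamma(s(m))}$ by $e^{\Gamma_1(s(m))}$ inside a suitable representation of $h(m) - b(m)$, and show that the resulting error is harmless. Write $h(m) - b(m) = e^{\sum z_j(m) N_j} e^{\Gamma(s(m))} v(m)$ for a unique $v(m) \in F^0$, where $b(m)$ is the given harmless sequence. Since $\norm{h(m)}$ is polynomially bounded in $t_1(m)$ and $b(m)$ is harmless, $\norm{h(m) - b(m)}$ enjoys the same polynomial bound. Because $e^{-\Gamma(s(m))}$ stays bounded by holomorphy of $\Gamma$ at the origin, and $e^{-\sum z_j(m) N_j}$ grows polynomially in $t_1(m)$ (using $\abs{z_j(m)} = O(t_1(m))$), I obtain a polynomial bound $\norm{v(m)} = O(t_1(m)^{N'})$ for some $N'$.

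Now decompose
\[
h(m) - b(m) = c(m) + e^{\sum z_j(m) N_j} e^{\Gamma_1(s(m))} v(m),
\]
where $c(m) = e^{\sum z_j(m) N_j} \bigl( e^{\Gamma(s(m))} - e^{\Gamma_1(s(m))} \bigr) v(m)$. The second summand lies in $\Phi_1^0 \bigl( z(m) \bigr)$ because $v(m) \in F^0$, so $h(m) \equiv b(m) + c(m) \mod \Phi_1^0 \bigl( z(m) \bigr)$. It remains only to show that $c(m)$ is harmless with respect to $t(m)$, since $b(m) + c(m)$ will then be harmless and the proof will be complete.

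The key observation is that the holomorphic operator-valued function $e^{\Gamma(s)} - e^{\Gamma_1(s)}$ vanishes identically on the subvariety $\{s_j = 0 \colon j \in J_1\}$, so one has a bound $\norm{e^{\Gamma(s(m))} - e^{\Gamma_1(s(m))}} \leq C \sum_{j \in J_1} \abs{s_j(m)}$. Since $\abs{s_j(m)} = O(e^{-\alpha t_1(m)})$ for $j \in J_1$, combining this with the polynomial bounds on $e^{\sum z_j(m) N_j}$ and $v(m)$ yields $\norm{c(m)} = O(e^{-\alpha'' t_1(m)})$ for some $\alpha'' > 0$. For each $k$, since $T_k$ commutes with $e^{\sum z_j N_j}$, the same reasoning applied to $T_k \bigl( e^{\Gamma(s(m))} - e^{\Gamma_1(s(m))} \bigr) v(m)$ gives $\norm{T_k c(m)} = O(e^{-\alpha'' t_1(m)}) \leq O(e^{-\alpha'' t_k(m)})$, using $t_1(m) \geq t_k(m)$ eventually. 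This is exactly what harmlessness demands. There is no serious obstacle; the argument is a careful accounting of polynomial versus exponential factors, and the only delicate point is the polynomial bound on $\norm{v(m)}$, which rests on the joint use of the holomorphy of $\Gamma$ and the estimate $\abs{z_j(m)} = O(t_1(m))$.
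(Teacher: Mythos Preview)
Your proof is correct and follows essentially the same approach as the paper's. The paper packages the correction term as $\Delta(m)\bigl(b(m)-h(m)\bigr)$, where $\Delta(m) = e^{\sum z_j(m) N_j} e^{\Gamma_1(s(m))} e^{-\Gamma(s(m))} e^{-\sum z_j(m) N_j} - \id$, but a one-line computation shows this is exactly your $c(m)$; the two arguments are identical up to bookkeeping. (Your verification of the $T_k$-condition for harmlessness is even slightly overcomplicated: once $\norm{c(m)}$ is $O(e^{-\alpha'' t_1(m)})$, applying the bounded operator $T_k$ and using $t_1(m) \geq t_k(m)$ eventually already gives the required bound, without needing to commute $T_k$ past anything.)
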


\begin{proof}
Suppose that $h(m) \equiv b(m) \mod \Phi^0 \bigl( z(m) \bigr)$. We have
\[
	e^{\sum z_j N_j} e^{\Gamma(s)} = 
		\left( e^{\sum z_j N_j} e^{\Gamma(s)} e^{-\Gamma_1(s)} e^{-\sum z_j N_j} \right)
		\cdot e^{\sum z_j N_j} e^{\Gamma_1(s)},
\]
and because $\abs{s_j(m)}$ is in $O(e^{-\alpha t_1(m)})$, \propositionref{prop:Gamma}
shows that the difference 
\[
	\Delta(m) = 
		\left( e^{\sum z_j(m) N_j} e^{\Gamma_1(s(m))} e^{-\Gamma(s(m))} 
		e^{-\sum z_j(m) N_j} \right) - \id
\]
is an operator whose norm is in $O(e^{-\alpha t_1(m)})$. We therefore obtain
\[
	h(m) \equiv b(m) + \Delta(m) \bigl( b(m) - h(m) \bigr) 
		\mod e^{\sum z_j(m) N_j} e^{\Gamma_1(s(m))} F^0,
\]
and because $\norm{b(m)}$ is bounded and $\norm{h(m)}$ is in $O(t_1(m)^N)$, the
sequence on the right-hand side is still harmless with respect to $t(m)$. 
\end{proof}

\subsection{Proof in the general case}
\label{par:proof-n}

We now prove \theoremref{thm:local3} by induction on $d \geq 1$. As in the
one-variable case, the argument can be divided into six steps. 

\paragraph{Step 1}

To get started, we have to prove that the sequence $h(m) \in \HZ(K)$ is bounded in
the Hodge norm at the point $\Phi \bigl( z(m) \bigr)$. This follows immediately from
\propositionref{prop:bounded}. As in the one-variable case, we will later use only
the equivalent fact that the sequence $e(m) h(m)$ is bounded. We also note that the
sequence $\norm{h(m)}$ grows at most like a power of $t_1(m)$.

\paragraph{Step 2}

We now reduce to the case where $\Phi(z)$ is a nilpotent orbit in the variables
$\{s_j\}_{j \in J_1}$; those are the ones that are going to zero most
quickly. Recall that
\[
	\Phi(z) = e^{\sum z_j N_j} e^{\Gamma(s)} F;
\]
let $\Gamma_1(s)$ denote the result of setting $s_j = 0$ for every $j \in J_1$. The
claim is that we can replace $\Gamma(s)$ by $\Gamma_1(s)$ without affecting any of
the conditions of the problem; this is proved in \lemmaref{lem:nilpotent-J}.
After making the obvious replacements, we can therefore assume without loss of
generality that the operator $\Gamma(s)$ does not depend on the variables $s_j$ with 
with $j \in J_1$. In particular, $e^{\Gamma(s)}$ now commutes with $T_1$ by
\propositionref{prop:Gamma}, and therefore preserves the weight filtration $W^1$.
Note that we only have $\Phi(z) \in D$ when all the imaginary parts of $z \in \HH^n$
are sufficiently large; after passing to a subsequence, we may assume that this is
the case along our sequence $z(m)$.

\paragraph{Step 3}

Our next goal is to show that $h(m) \in W_0^1$. As in the one-variable case, we will
deduce this from the boundedness of the sequence $e(m) h(m) \in \HR$. Let $\ell \in
\ZZ^d$ be the largest index (in the lexicographic ordering) with the
property that $h(m)^{(\ell_1, \dotsc, \ell_d)}$ is nonzero for infinitely many $m$.
After passing to a subsequence, we therefore have $h(m) \in W_{\ell_1}^1$; its
projection to $\gr_{\ell_1}^{W^1}$ lies in the image of $W_{\ell_1+\ell_2}^2$, and so
on. Note that the projection 
\[
	\HC^{(\ell_1, \dotsc, \ell_d)} \to 
		\gr_{\ell_1 + \dotsb + \ell_d}^{W^d} \dotsb 
			\gr_{\ell_1 + \ell_2}^{W^2} \gr_{\ell_1}^{W^1}
\]
is an isomorphism; because $h(m) \in \HZ$, it follows that $h(m)^{(\ell_1, \dotsc,
\ell_d)}$ takes values in a discrete set.  In particular, we have
$\norm{h(m)^{(\ell_1, \dotsc, \ell_d)}} \geq \eps$ for a constant $\eps > 0$. 

Now suppose that $h(m) \not\in W_0^1$; in other words, suppose that $\ell_1 \geq 1$.
Define $\HCtl = \gr_{\ell_1}^{W^1}$; according to \propositionref{prop:induction}, it
again supports a polarized variation of $\ZZ$-Hodge structure of weight $\ell_1$. Let
$\htl(m)$ denote the image of $h(m)$ in $\HCtl$. Because $\Phi(z)$ is a nilpotent
orbit in the variables $\{s_j\}_{j \in J_1}$, we can use \lemmaref{lem:harmless-W} to
make sure that $b(m) \in W_{\ell_1}^1$. In the congruence
\[
	h(m) - \bigl( b_1(m) + \dotsb + b_d(m) \bigr) \equiv b_0(m) 
		\mod \Phi^0 \bigl( z(m) \bigr),
\]
the term in parentheses is contained in $\ker T_1 \subseteq W_0^1$, and therefore
disappears when we project to $\HCtl$. Under the assumption that $\ell_1 \geq 1$, our
sequence $\htl(m)$ is therefore exponentially close to the subspace $\Phitl^0
\bigl( z(m) \bigr)$. We can now apply \propositionref{prop:weights} to the sequence
$\htl(m)$ and the polarized variation of Hodge structure $\Phitl(z)$ on $\HCtl$; the
result is that $\ell_1 + \ell_2 = 0$ and $\ell_3 = \dotsb = \ell_d = 0$. But then
\[
	\norm{e(m) h(m)}^2 \geq \norm{e(m) h(m)^{(\ell_1, \dotsc, \ell_d)}}^2 
		\geq  \eps^2 \left( \frac{t_1(m)}{t_2(m)} \right)^{\ell_1},
\]
and since $\ell_1 \geq 1$, this inequality contradicts the boundedness of $e(m)
h(m)$. Consequently, $h(m) \in W_0^1$ after all.

\paragraph{Step 4}

Using the notation from \parref{par:induction}, we now apply the induction hypothesis
to the sequence $\bigl( \ztl(m), \htl(m) \bigr)$ and the period mapping $\Phitl(z)$
on the space $\HCtl = \gr_0^{W^1}$; the construction in
\propositionref{prop:induction} shows that all the assumptions are again satisfied,
but with a smaller value of $d$. After passing to a subsequence, $\htl(m)$ has a
constant value $\htl \in \HZtl$, and $\Ttl_k \htl = 0$ for $k = 2, \dotsc, d$. In
order to lift these results back to $\HC$, we define
\[
	h_0 = \sum_{\ell_2, \dotsc, \ell_d} h(m)^{(0, \ell_2, \dotsc, \ell_d)} \in \HR;
\]
note that $h_0$ is constant, because it projects to the constant sequence $\htl$
under the isomorphism $E_0(Y_1) \simeq \HCtl$. We also have $h_0 \in W_0^k$ for every
$k = 2, \dotsc, d$, because $\htl \in \Wtl_0^k$. The conclusion is that
\[
	h(m) \equiv h_0 \mod W_{-1}^1.
\]
Our next task is to prove that $T_1 h_0^{(0, \dotsc, 0)} = 0$. 

If we apply the operator $e(m)$ to the congruence in \eqref{eq:assumption}, we obtain
\[
	e(m) h(m) \equiv e(m) b(m) \mod e(m) \Phi^0 \bigl( z(m) \bigr).
\]	
Because $e(m) h(m)$ is bounded, and because we have already computed the limit of $e(m)
b(m)$ in  \lemmaref{lem:limit-bm}, we can pass to a subsequence where
\[
	v = \lim_{m \to \infty} e(m) h(m) \equiv 
		\lim_{m \to \infty} b_d(m)^{(0, \dotsc, 0)} \mod \Fsh^0.
\]
Now comes the crucial point: by \lemmaref{lem:limit-bm}, the right-hand side of the
congruence is an element of $E_0(Y_k) \cap \ker T_k$ for every $k = 1, \dotsc, d$.
Because $v \in W_0^1 \cap \HR$, we can apply \lemmaref{lem:MHS} from the one-variable
case to the $\RR$-split mixed Hodge structure $\bigl( W^1, e^{-i T_1} \Fsh \bigr)$
and conclude that $v \in E_0(Y_1)$ and $T_1 v = 0$.

On the other hand, we can project the congruence
\[
	e(m) h(m) \equiv e(m) h_0 \mod W_{-1}^1
\]
to the subspace $E_0(Y_1)$; because $h_0 \in W_0^k$ for every $k = 1, \dotsc, d$, we
get
\[
	v = \lim_{m \to \infty} e(m) h_0 = h_0^{(0, \dotsc, 0)}.
\]
In particular, we have $T_1 h_0^{(0, \dotsc, 0)} = 0$.

\paragraph{Step 5}

Now we show that $\norm{T_1 h(m)}$ is in $O(e^{-\alpha t_1(m)})$; the method is
almost the same as in the one-variable case. We have
\[
	e(m) T_1 h(m) \equiv e(m) T_1 b_0(m) 
		\mod T_1 e(m) \Phi^0 \bigl( z(m) \bigr);
\]
here we used the fact that $T_1 e(m) = t_1(m) \cdot e(m) T_1$, because $T_1 = \Th_1$
commutes with $Y_2, \dotsc, Y_d$ and satisfies $\lie{Y_1}{T_1} = - 2 T_1$. We
claim that $\norm{e(m) T_1 h(m)}$ is bounded by a constant multiple of $\norm{e(m)
T_1 b_0(m)}$. If not, then the ratios
\[
	\frac{\norm{e(m) T_1 b_0(m)}}{\norm{e(m) T_1 h(m)}}
\]
are going to zero. After passing to a subsequence, the sequence of unit vectors
\begin{equation} \label{eq:unit}
	u(m) = \frac{e(m) T_1 h(m)}{\norm{e(m) T_1 h(m)}} \in W_{-2}^1 \cap \HR 
\end{equation}
converges to a unit vector $u \in W_{-2}^1 \cap \Fsh^{-1} \cap \HR$. Now $\bigl( W^1,
e^{-i T_1} \Fsh \bigr)$ is an $\RR$-split mixed Hodge structure; we can therefore
apply \lemmaref{lem:MHS} from the one-variable case to deduce that $u \in
E_{-2}(Y_1)$.

Recall that the decomposition $W_{-2}^1 = E_{-2}(Y_1) \oplus W_{-3}^1$ is orthogonal
with respect to the inner product on $\HC$. If we project the congruence
\[
	u(m) \equiv \frac{e(m) T_1 h_0}{\norm{e(m) T_1 h(m)}} \mod W_{-3}^1
\]
to the subspace $E_{-2}(Y_1)$, we find that 
\[
	u = \lim_{m \to \infty} \frac{e(m) T_1 h_0}{\norm{e(m) T_1 h(m)}}.
\]
Because the right-hand side belongs to $W_{-2}^2 \cap \dotsb \cap W_{-2}^d$, it
follows that $u$ lies in the intersection $W_{-2}^1 \cap \dotsb \cap W_{-2}^d \cap
\Fsh^{-1} \cap \HR$. We can therefore apply \lemmaref{lem:MHS} again, to the
$\RR$-split mixed Hodge structure 
\[
	\bigl( W^k, e^{-i(\Th_1 + \dotsb + \Th_k)} \Fsh \bigr),
\]
to show that $(Y_1 + \dotsb + Y_k) u = -2 u$ for every $k = 1, \dotsb, d$. These
relations are saying that $u \in \HC^{(-2, 0, \dotsc, 0)}$. But if we project
\eqref{eq:unit} to that summand and use the fact that $h(m) \equiv h_0 \mod
W_{-1}^1$, we find that
\[
	u(m)^{(-2, 0, \dotsc, 0)} 
		= \frac{e(m) T_1 h_0^{(0, \dotsc, 0)}}{\norm{e(m) T_1 h(m)}}
		= 0.
\]
This forces $u = 0$, in contradiction to the fact that $u$ is a
unit vector. Consequently, $\norm{e(m) T_1 h(m)}$ must be bounded by a constant
multiple of $\norm{e(m) T_1 b_0(m)}$, and therefore in $O(e^{-\alpha
t_1(m)})$. Because $e(m)^{-1}$ grows at most like a power of $t_1(m)$, this is enough
to conclude that $\norm{T_1 h(m)}$ is exponentially small.

\paragraph{Step 6}

We can now complete the proof by the method of \cite[4.9]{CDK}. If $d \geq 2$, we
observe that
\begin{equation} \label{eq:induction}
\begin{split}
	e^{-i t_1(m) T_1} h(m)
		\equiv e^{-i t_1(m) T_1} b_0(m) + & b_1(m) + \dotsb + b_{d}(m) \\
			&\mod e^{-i t_1(m) T_1} \Phi^0 \bigl( z(m) \bigr).
\end{split}
\end{equation}
Because $\norm{T_1 h(m)}$ is in $O(e^{-\alpha t_1(m)})$, it follows that $h(m)$ is
the sum of a harmless element and an element of $e^{-i t_1(m) T_1} \Phi^0 \bigl( z(m)
\bigr)$. Remembering that $\Phi(z)$ is a nilpotent orbit in the variables $\{s_j\}_{j
\in J_1}$, the sequence of filtrations
\[
	e^{-i t_1(m) T_1} \Phi \bigl( z(m) \bigr) \in D
\]
no longer involves either $t_1(m)$ or $T_1$; this means that we have managed to
reduce the value of $d$. By induction, we can pass to a subsequence and arrange that
$h(m)$ is constant and in the kernel of $T_2, \dotsc, T_d$. Since $T_1 h(m)$ is
exponentially small, it has to be zero as well, concluding the proof in the case $d
\geq 2$.

If $d = 1$, then we argue as in the one-variable case. Recall that
\[
	\Phi \bigl( z(m) \bigr) = e^{\sum z_j(m) N_j} F
		 = e^{i t(m) T_1} e^{\sum w_j(m) N_j} F
\]
is a nilpotent orbit, with $w(m) \in \CC^n$ convergent and $\Phi \bigl( w(m) \bigr)
\in D$. The formula in \eqref{eq:induction} shows that the Hodge norm of $h(m)$ with
respect to $\Phi \bigl( w(m) \bigr)$ is bounded. Since these Hodge filtrations lie in
a compact set, $\norm{h(m)}$ must be bounded; after passing to a subsequence, $h(m)$
is constant, and then $T_1 h(m) = 0$ as before.

\section{Construction of the extension space}

\subsection{Setup and basic properties}
\label{par:setup}

Let $X$ be a complex manifold, $Z \subseteq X$ an analytic subset, and $\shH$ a
polarized variation of $\ZZ$-Hodge structure on $X_0 = X \setminus Z$. We denote by
$\shHZ$ the underlying local system of free $\ZZ$-modules, and by $Q \colon \shHQ
\tensor_{\QQ} \shHQ \to \QQ(0)$ the bilinear form giving the polarization. 

Now let $\TZ$ be the \'etal\'e space of the local system $\shHZ$; it is a (usually
disconnected) covering space of the complex manifold $X_0$. Point of $\TZ$ may be
thought of as pairs $(x, h)$, where $x \in X_0$ and $h \in \shH_{\ZZ, x}$ is a class
in the stalk. As in the introduction, we define
\[
	\TZ(K) = \menge{(x,h) \in \TZ}{\abs{Q_x(h,h)} \leq K}
\]
for every $K \geq 0$; note that it is a union of connected components of $\TZ$,
because the function $(x,h) \mapsto Q_x(h,h)$ is constant on each connected
component. Let $T(\shH) = \Spec(\Sym \shHd)$ be the vector bundle with sheaf of
sections $\shHd$; we similarly define $T(F^1 \shH)$.

We first describe in more detail how the holomorphic mapping $\eps \colon \TZ \to T(F^1
\shH)$ is constructed. The pairing $Q$ induces an injective morphism of sheaves
\[
	\shHZ \into \shHd, \quad h \mapsto Q(h, \argbl);
\]
it is injective because $Q$ is nondegenerate. As in \cite[Section~2.6]{Schnell-N},
this morphism gives rise to a holomorphic mapping
\[
	\TZ \into T(\shH),
\]
which embeds the complex manifold $\TZ$ into the holomorphic vector bundle $T(\shH)$.
From now on, we identify $\TZ$ with a complex submanifold of $T(\shH)$.  We obtain
$\eps \colon \TZ \to T(F^1 \shH)$ by composing with the projection $q \colon T(\shH)
\to T(F^1 \shH)$. 

Now fix some $K \geq 0$. We already know from the result about Hodge structures in
\lemmaref{lem:fiber} that $\eps \colon \TZ(K) \to T(F^1 \shH)$ has finite fibers; the
purpose of this section is to understand its global properties. The
following diagram shows all the relevant mappings:
\[
\begin{tikzcd}[column sep=large]
& T(\shH) \dar{q} \\
\TZ(K) \arrow{dr}{\pi} \rar{\eps} \arrow[hook]{ur} & T(F^1 \shH) \dar \\
& X_0.
\end{tikzcd}
\]
The polarization defines a hermitian metric on the holomorphic vector bundle
associated with $\shH$, the so-called \define{Hodge metric}. It induces hermitian metrics on
the two bundles $T(\shH)$ and $T(F^1 \shH)$. Let $B_r(\shH) \subseteq T(\shH)$ denote
the closed tube of radius $r > 0$ around the zero section. The proof of
\lemmaref{lem:fiber} shows that 
\[
	\eps^{-1} \bigl( B_r(\shH) \bigr) 
		\subseteq B_{\sqrt{K + 4 r^2}} \bigl( F^1 \shH \bigr);
\]
in particular, the general discussion in \parref{par:covering} applies to our
situation. We summarize the results in the following proposition.

\begin{proposition} \label{prop:eps}
The holomorphic mapping $\eps \colon \TZ(K) \to T(F^1 \shH)$ is finite,
and its image is a closed analytic subset of $T(F^1 \shH)$. Moreover, the induced mapping
from $\TZ(K)$ to the normalization of the image is a finite covering space.
\end{proposition}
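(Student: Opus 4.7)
The plan is to combine three ingredients: the quantitative estimate from the proof of \lemmaref{lem:fiber}, Remmert's proper mapping theorem, and the general covering-space machinery of \parref{par:covering}. To begin, I would use the embedding $\TZ \into T(\shH)$ induced by the polarization $h \mapsto Q(h,\argbl)$ to view $\TZ(K)$ as a complex submanifold of $T(\shH)$; in this ambient bundle the Hodge metric provides the natural way to measure sizes along the fibers.

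The central quantitative input is the inequality $\norm{h}_H^2 \leq K + 4R^2$ extracted from the proof of \lemmaref{lem:fiber}: whenever a class $h \in \shH_{\ZZ,x}$ satisfies both $\abs{Q_x(h,h)} \leq K$ and $\norm{\eps(x,h)}_H \leq R$, its Hodge norm is controlled purely in terms of $K$ and $R$. This translates into an inclusion
\[
	\eps^{-1} \bigl( B_R(F^1 \shH) \bigr)
		\subseteq \TZ(K) \cap B_{\sqrt{K + 4 R^2}}(\shH)
\]
of subsets of $T(\shH)$. For a compact $C \subseteq T(F^1 \shH)$ with compact projection $K_0 \subseteq X_0$, I would then pick $R$ with $C \subseteq B_R(F^1 \shH)$ and deduce that $\eps^{-1}(C)$ is contained in the closed tube $B_{\sqrt{K+4R^2}}(\shH)$ over $K_0$. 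Because $\TZ \to X_0$ is étale with discrete integral fibers, and because the Hodge norm is continuous and separates those fibers locally, $\eps^{-1}(C)$ is compact. Hence $\eps$ is proper; combined with the fiberwise finiteness from \lemmaref{lem:fiber}, it follows that $\eps$ is a finite holomorphic map.

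Remmert's proper mapping theorem then gives that $Y = \eps \bigl( \TZ(K) \bigr)$ is a closed analytic subset of $T(F^1 \shH)$. Writing $\nu \colon \widetilde{Y} \to Y$ for its normalization, and noting that $\TZ(K)$ is itself normal (being a complex manifold), the universal property of normalization factors $\eps$ through a finite holomorphic map $\TZ(K) \to \widetilde{Y}$.

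The remaining, and to my mind most delicate, step is to upgrade this finite map to a finite covering space of $\widetilde{Y}$; this is exactly what the general discussion in \parref{par:covering} is designed to supply, and I would simply invoke it. Intuitively, any ramification would require two sheets of $\TZ(K)$, which are locally separated inside $T(\shH)$ by the discreteness of the integral lattice, to collide into a single point of $\widetilde{Y}$, contradicting the normality of $\widetilde{Y}$. Making this precise is the role of \parref{par:covering}, so I would appeal to those results rather than repeat the argument here.
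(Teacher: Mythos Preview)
Your proposal is correct and follows essentially the same route as the paper: the paper's own proof simply refers to \parref{par:covering}, which establishes the general statement that condition \eqref{eq:condition} (the tube inclusion you wrote down, coming from the estimate in \lemmaref{lem:fiber}) forces $\eps$ to be proper with finite fibers, hence finite with analytic image by the finite mapping theorem, and then shows the normalization statement via local irreducibility of normal spaces and openness of finite maps onto such spaces. Your informal explanation of the covering-space step (sheets separated by the lattice cannot collide in a normal target) is morally the same content, and since you defer to \parref{par:covering} for the precise argument, nothing is missing.
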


\begin{proof}
This is proved in \parref{par:covering} below.
\end{proof}

\subsection{Analyticity of the closure}

In this section, we prove \theoremref{thm:main} in general.  We denote by $M$ the
polarized Hodge module of weight $\dim X$ with strict support $X$, canonically
associated with $\shH$ by the equivalence of categories in
\cite[Theorem~3.21]{Saito-MHM}. Let $(\Mmod, F_{\bullet} \Mmod)$ denote the
underlying filtered regular holonomic $\Dmod_X$-module. By construction, the
restriction of $F_{-1} \Mmod$ to the open subset $X_0$ is isomorphic to $F^1 \shH$.
The analytic space $T(F_{-1} \Mmod)$ therefore contains an open subset isomorphic to
the vector bundle $T(F^1 \shH)$. We denote by
\[
	\eps \colon \TZ(K) \to T(F_{-1} \Mmod)
\]
the resulting holomorphic mapping.

\begin{theorem} \label{thm:closure}
The closure of the image of the holomorphic mapping 
\[
	\eps \colon \TZ(K) \to T(F_{-1} \Mmod)
\]
is an analytic subset of $T(F_{-1} \Mmod)$.
\end{theorem}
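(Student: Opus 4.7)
The claim is local on $T(F_{-1}\Mmod)$, hence on $X$; by \propositionref{prop:eps} the image of $\eps$ is already closed analytic over the open part sitting above $X_0$, so it suffices to show analyticity of the closure in a neighborhood of the fiber over an arbitrary point of $Z = X \setminus X_0$. My plan is to pass to a proper cover where the hypotheses of \theoremref{thm:main-local} hold, apply that result, and then descend via Remmert's proper mapping theorem.

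First, apply Hironaka's resolution of singularities to obtain a projective birational morphism $\pi \colon X' \to X$ that is an isomorphism over $X_0$ and such that $D' = \pi^{-1}(Z)$ is a simple normal crossing divisor on the smooth manifold $X'$. Then apply Kawamata's covering construction to the pair $(X', D')$ to produce a finite Galois cover $g \colon X'' \to X'$ with $X''$ smooth, $D'' = g^{-1}(D')$ simple normal crossing, and such that the local monodromies of the pullback variation $\shH'' = (\pi \circ g)^{\ast}\shH$ on $X''_0 = X'' \setminus D''$ are all unipotent. Writing $f = \pi \circ g$, the morphism $f$ is proper and surjective, and restricts to a finite étale cover $X''_0 \to X_0$; since $Q$ pulls back to the polarization of $\shH''$, the étalé spaces satisfy $\TZ''(K) = \TZ(K) \times_{X_0} X''_0$ for the same bound $K$. \theoremref{thm:main-local} then gives that the closure of $\eps''\bigl(\TZ''(K)\bigr)$ in $T(F_{-1}\Mmod'')$ is an analytic subset.

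To descend this analyticity to $X$, use the functoriality of Saito's filtered $\Dmod$-modules along the proper morphism $f$: the direct image $f_{+} M''$ decomposes as a sum of pure polarized Hodge modules, whose component with strict support $X$ is canonically $M$, and the underlying filtered sheaf structure yields a natural morphism of coherent $\OX$-modules linking $F_{-1}\Mmod$ with $f_{\ast}(F_{-1}\Mmod'')$. Dually this produces a proper holomorphic map from a canonical closed analytic subspace $\Sigma \subseteq T(F_{-1}\Mmod'')$ onto $T(F_{-1}\Mmod)$, intertwining $\eps''$ with $\eps$ through the étale covering $\TZ''(K) \to \TZ(K)$; Remmert's proper mapping theorem then shows that the image of the (already analytic) closure of $\eps''(\TZ''(K))$ is a closed analytic subset of $T(F_{-1}\Mmod)$, and this image coincides with the closure of $\eps(\TZ(K))$. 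The routine reduction in the first two paragraphs notwithstanding, the main obstacle lies in this last step: extracting from Saito's theory the precise functorial map between the filtered Hodge sheaves on $X$ and $X''$ that makes the relevant $\eps$-diagrams commute, mirroring the analogous reduction carried out in \cite{Schnell-N}.
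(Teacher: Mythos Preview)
Your overall strategy matches the paper's: reduce to the normal-crossing unipotent situation via a proper modification and a finite cover, apply \theoremref{thm:main-local} upstairs, and descend by Remmert's theorem. The construction of $f \colon Y \to X$ (your $X''$) and the identification $\TZ'(K) = \TZ(K) \times_{X_0} Y_0$ are exactly as in the paper.

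Where you diverge---and where you yourself flag the ``main obstacle''---is the functoriality step. You try to go through the direct image $f_{+} M''$ and its decomposition, hoping to extract a proper map from some subspace $\Sigma \subseteq T(F_{-1}\Mmod'')$ onto $T(F_{-1}\Mmod)$. This is the wrong direction and is not made precise: $T(F_{-1}\Mmod'')$ lives over $X''$ while $T(F_{-1}\Mmod)$ lives over $X$, and the direct-image data does not naturally furnish a proper holomorphic map between them with the required compatibility. The paper instead uses the \emph{pullback} morphism
\[
	F_{-1}\Mmod' \longrightarrow f^{\ast} F_{-1}\Mmod
\]
from \cite[Lemma~2.21]{Schnell-N}, which is an isomorphism over $Y_0$. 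Taking $\Spec\Sym$, this yields a holomorphic map
\[
	g \colon T(F_{-1}\Mmod) \times_X Y \longrightarrow T(F_{-1}\Mmod')
\]
and a commutative square with $\eps' = g \circ (\eps \times \id)$. Since $g$ is an isomorphism over $Y_0$, the analytic closure of $\eps'(\TZ'(K))$ pulls back to an analytic set whose intersection with the open part is exactly the image of $\eps \times \id$; the closure of the latter is then a union of components, hence analytic. Now the first projection $T(F_{-1}\Mmod) \times_X Y \to T(F_{-1}\Mmod)$ is proper, and Remmert finishes. So the missing ingredient in your argument is precisely this pullback comparison of filtered pieces, which makes the descent a one-line diagram chase rather than an appeal to the decomposition theorem.
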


\begin{proof}
There is a proper holomorphic mapping $f \colon Y \to X$, whose restriction to $Y_0 =
f^{-1}(X_0)$ is a finite covering space, such that $D = f^{-1}(Z)$ is a divisor with normal
crossings, and such that the local monodromy of $\fu_0 \shH$ at every point of $D$ is
unipotent. To construct $f$, we first take an embedded resolution of singularities of
$(X, Z)$. According to \cite[Lemma~4.5]{Schmid}, the pullback of $\shH$ has
quasi-unipotent local monodromy at every point of the preimage of $Z$; after a finite
branched covering and a further resolution of singularities, we arrive at the stated
situation.

Now let $M'$ denote the polarized Hodge module of weight $\dim Y$ with strict
support $Y$, associated with $\shH' = \fu_0 \shH$. According to
\cite[Lemma~2.21]{Schnell-N}, there is a canonical morphism
\[
	F_{-1} \Mmod' \to \fu F_{-1} \Mmod,
\]
whose restriction to $Y_0$ is an isomorphism. We then have the following commutative
diagram of holomorphic mappings:
\[
\begin{tikzcd}
\TZ(K) \dar{\eps} & \TZ(K) \times_X Y \lar[swap]{p_1} 
	\dar{\eps \times \id} \arrow[equal]{r} & \TZ'(K) \dar{\eps'} \\
T(F_{-1} \Mmod) & T(F_{-1} \Mmod) \times_X Y \lar[swap]{p_1} \rar{g} & T(F_{-1} \Mmod')
\end{tikzcd}
\]
By \theoremref{thm:main-local}, the closure of the image of $\eps'$ is analytic.
The same is therefore true for $\eps \times \id$, because $g$ is an isomorphism over
$Y_0$. Because $f$ is proper, the result for $\eps$ now follows from Remmert's
proper mapping theorem \cite[III.4.3]{SCV7}.
\end{proof}

\subsection{Extension of the finite mapping}

We are now ready to prove the main result, namely that $\eps \colon \TZ(K) \to
T(F_{-1} \Mmod)$ can be extended to a finite mapping.

\begin{theorem} \label{thm:extension}
There is a normal analytic space $\TZt(K)$ containing the complex manifold $\TZ(K)$
as a dense open subset, and a finite holomorphic mapping 
\[
	\epst \colon \TZt(K) \to T(F_{-1} \Mmod),
\]
whose restriction to $\TZ(K)$ agrees with $\eps$. Moreover, $\TZt(K)$ and $\epst$ are
unique up to isomorphism.
\end{theorem}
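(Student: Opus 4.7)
The plan is to build $\TZt(K)$ as a finite branched covering of a natural normal analytic base, and then to check that the construction satisfies all the claimed properties. Start by letting $Y \subseteq T(F_{-1}\Mmod)$ denote the closure of $\eps(\TZ(K))$; this is an analytic subset by \theoremref{thm:closure}. Let $\nu \colon \widebar{Y} \to Y$ denote its normalization, which exists as a normal analytic space by standard results (for instance \cite{GR}). The eventual $\TZt(K)$ will be obtained as a finite branched cover of $\widebar{Y}$, and $\epst$ will be the composition with $\nu$ followed by the inclusion $Y \into T(F_{-1}\Mmod)$.

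To produce this branched cover, observe that the projection $T(F_{-1}\Mmod) \to X$ identifies the preimage of $X_0$ with the vector bundle $T(F^1\shH)$, so $Y \cap \pi^{-1}(X_0)$ is exactly the image $\eps\bigl(\TZ(K)\bigr) \subseteq T(F^1\shH)$ from \propositionref{prop:eps}. Writing $\widebar{Y}_0 = \nu^{-1}\bigl(Y \cap \pi^{-1}(X_0)\bigr)$, it is therefore the normalization of this image, and \propositionref{prop:eps} tells us that the induced map
\[
        \eps_0 \colon \TZ(K) \to \widebar{Y}_0
\]
is a finite unramified covering space of complex manifolds. Moreover, $\widebar{Y} \setminus \widebar{Y}_0$ is the analytic subset $\nu^{-1}\bigl(Y \cap \pi^{-1}(Z)\bigr)$, and it is nowhere dense in $\widebar{Y}$ because $\eps(\TZ(K)) \subseteq \pi^{-1}(X_0)$ is by construction dense in $Y$.

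At this point the Fortsetzungssatz of Grauert and Remmert applies: a finite unramified covering of the complement of a nowhere-dense analytic subset of a normal analytic space extends uniquely to a finite branched covering by a normal analytic space (see \cite{GR} or \cite{SCV7}). This produces a normal analytic space $\TZt(K)$ together with a finite surjective holomorphic map $\TZt(K) \to \widebar{Y}$, restricting to $\eps_0$ over $\widebar{Y}_0$, and containing $\TZ(K) \simeq \eps_0^{-1}(\widebar{Y}_0)$ as a dense open subset. Composing with $\nu$ and the inclusion into $T(F_{-1}\Mmod)$ yields the required finite holomorphic map $\epst \colon \TZt(K) \to T(F_{-1}\Mmod)$ that restricts to $\eps$ on $\TZ(K)$.

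For uniqueness, suppose $\TZt(K)'$ and $\epst{}'$ form another such extension. Since $\epst{}'$ is finite, its image is a closed analytic subset containing the dense set $\eps(\TZ(K))$, hence equal to $Y$. The normality of $\TZt(K)'$ forces $\epst{}'$ to factor through the normalization $\widebar{Y}$, producing a finite map $\TZt(K)' \to \widebar{Y}$ that over $\widebar{Y}_0$ agrees with $\eps_0$. The uniqueness clause in the Fortsetzungssatz then identifies this map canonically with $\TZt(K) \to \widebar{Y}$, and hence identifies the two extensions. The main subtlety I anticipate is verifying cleanly that the map $\TZ(K) \to \widebar{Y}_0$ really is an honest unbranched covering space (so that the Fortsetzungssatz applies verbatim) and that $\widebar{Y}_0$ is dense in $\widebar{Y}$; both follow from \propositionref{prop:eps} and the definition of $Y$ as a closure, but they are the points that need to be checked with care.
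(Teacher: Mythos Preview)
Your proposal is correct and follows essentially the same route as the paper: take the analytic closure $Y$ of $\eps(\TZ(K))$ via \theoremref{thm:closure}, pass to its normalization, use \propositionref{prop:eps} to see that $\TZ(K)$ is a finite covering of the open part, and then invoke the Grauert--Remmert Fortsetzungssatz for both existence and uniqueness. The paper's argument is terser on the points you flag (density of $\widebar{Y}_0$ and the covering-space property), but the logic is identical.
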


\begin{proof}
The closure of the image of $\eps$ is an analytic subset of $T(F_{-1} \Mmod)$
according to \theoremref{thm:closure}. Let $W$ denote its normalization; according to
\propositionref{prop:eps}, the induced mapping from $\TZ(K)$ to $W$ is a finite
covering space over its image. The \emph{Fortsetzungssatz} of Grauert and Remmert
\cite[VI.3.3]{SCV7} shows that it extends in a unique way to a finite branched
covering of $W$. If we define $\TZt(K)$ to be the analytic space in this covering,
and $\epst \colon \TZt(K) \to T(F_{-1} \Mmod)$ to be the induced holomorphic mapping,
then all the requirements are fulfilled. The last assertion follows from the
uniqueness statement in \cite[VI.3.3]{SCV7}.
\end{proof}

As a consequence, we obtain a canonical analytic space that contains the locus of
Hodge classes $\Hdg(\shH) \cap \TZ(K)$ and is finite over $X$.

\begin{corollary} \label{cor:Hdg}
The locus of Hodge classes $\Hdg(\shH) \cap \TZ(K)$ extends in a canonical way to an 
analytic space that is finite over $X$.
\end{corollary}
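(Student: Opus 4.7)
The plan is to read off the corollary directly from \theoremref{thm:extension} by pulling back the zero section of the vector bundle / coherent sheaf spectrum $T(F_{-1}\Mmod)$. Concretely, let $o \colon X \into T(F_{-1}\Mmod)$ denote the zero section, which is a closed analytic subspace. Define
\[
    \Hdgt(\shH) \cap \TZt(K) \defeq \epst^{-1}\bigl( o(X) \bigr) \subseteq \TZt(K).
\]
This is a closed analytic subspace of $\TZt(K)$, and since $\epst$ is a finite holomorphic mapping by \theoremref{thm:extension}, its restriction
\[
    \epst^{-1}\bigl( o(X) \bigr) \to o(X) \simeq X
\]
is again finite, so we obtain an analytic space finite over $X$.

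Next I would check that this space genuinely extends the classical Hodge locus. Over $X_0$, the coherent sheaf $F_{-1}\Mmod$ agrees with $F^1 \shH$, so the open part of $T(F_{-1}\Mmod)$ lying over $X_0$ is the vector bundle $T(F^1 \shH)$. By the bilinear relations recalled in \parref{par:point}, a class $h \in \shH_{\ZZ,x}$ is a Hodge class exactly when $Q_x(h,\argbl)$ vanishes on $F^1 \shH_x$, i.e.\ exactly when $\eps(x,h)$ lies in the zero section. Hence
\[
    \eps^{-1}\bigl( o(X_0) \bigr) = \Hdg(\shH) \cap \TZ(K),
\]
and since $\epst$ restricts to $\eps$ on the dense open subset $\TZ(K) \subseteq \TZt(K)$, our analytic space $\epst^{-1}(o(X))$ contains $\Hdg(\shH) \cap \TZ(K)$ as an open subspace and restricts to it over $X_0$.

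Finally, canonicity follows formally from the uniqueness clause of \theoremref{thm:extension}: the pair $(\TZt(K), \epst)$ is determined up to unique isomorphism by the requirements that $\TZt(K)$ be normal, contain $\TZ(K)$ as a dense open subset, and that $\epst$ extend $\eps$. Any isomorphism of such extensions must commute with $\epst$, and therefore carries $\epst^{-1}(o(X))$ to itself, so $\Hdgt(\shH) \cap \TZt(K)$ is canonically attached to $\shH$ and $K$. There is no real obstacle here beyond assembling the pieces; the substantive content already sits in \theoremref{thm:main} and \theoremref{thm:extension}, and the only thing to be careful about is distinguishing the zero section of the vector bundle $T(F^1\shH)$ (over $X_0$) from the zero section of the generally singular analytic space $T(F_{-1}\Mmod)$ (over $X$), which is what allows the limit Hodge classes to appear in the fibers over $Z = X \setminus X_0$.
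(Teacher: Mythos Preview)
Your proof is correct and follows exactly the paper's approach: take the preimage of the zero section in $T(F_{-1}\Mmod)$ under the finite mapping $\epst$ from \theoremref{thm:extension}. The paper's proof is a single sentence to this effect, with the verification that this extends the classical Hodge locus and the canonicity already absorbed into \definitionref{def:Hdg-extended} and the uniqueness clause of \theoremref{thm:extension}; you have simply spelled these out.
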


\begin{proof}
We can take the preimage of the zero section in $T(F_{-1} \Mmod)$ under
the finite holomorphic mapping $\epst \colon \TZt(K) \to T(F_{-1} \Mmod)$.
\end{proof}

\subsection{General results about certain covering spaces}
\label{par:covering}

In this section, we consider the following general situation. Let $X$ be a complex manifold,
and suppose that we have a surjective mapping $q \colon E_1 \to E_2$ between two
holomorphic vector bundles on $X$. We assume that $E_1$ has a hermitian metric $h_1$,
and we endow $E_2$ with the induced hermitian metric $h_2$. Lastly, we shall assume
that we have a complex submanifold $T \into E_1$, with the property that $\pi
\colon T \to X$ is a (possibly disconnected) covering space. We denote by $\eps
\colon T \to E_2$ the induced holomorphic mapping; see also the diagram below.
\[
\begin{tikzcd}[column sep=large]
& E_1 \dar{q} \arrow[bend
left=40]{dd}{p_1} \\
T \arrow{dr}{\pi} \rar{\eps} \arrow[hook]{ur} & E_2 \dar{p_2} \\
& X.
\end{tikzcd}
\]
For any real number $r > 0$, we denote by $B_r(E_j)$ the closed tube of radius $r$ around
the zero section in the vector bundle $E_j$. We assume the following condition:
\begin{equation} \label{eq:condition}
	\text{For every $r > 0$, there exists $R > 0$ with
		$T \cap \eps^{-1} \bigl( B_r(E_2) \bigr) \subseteq B_R(E_1)$.}
\end{equation}

\begin{lemma}
If \eqref{eq:condition} holds, then $\eps \colon T \to E_2$ is a finite mapping.
\end{lemma}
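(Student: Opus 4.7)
The plan is to verify the two conditions that make $\eps \colon T \to E_2$ a finite holomorphic mapping in the analytic sense: properness, and finiteness of fibers.

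For properness, I would fix an arbitrary compact $K \subseteq E_2$ and produce a compact subset of $T$ containing $\eps^{-1}(K)$. Set $K_X = p_2(K)$ (compact in $X$ by continuity) and choose $r > 0$ with $K \subseteq B_r(E_2)$, which is possible because $K$ is bounded with respect to $h_2$. Hypothesis \eqref{eq:condition} then yields $R > 0$ with $T \cap \eps^{-1}(B_r(E_2)) \subseteq B_R(E_1)$. Using that $p_1|_T = \pi = p_2 \circ \eps$, we deduce
\[
\eps^{-1}(K) \;\subseteq\; T \cap B_R(E_1) \cap p_1^{-1}(K_X).
\]
The closed ball bundle $B_R(E_1) \cap p_1^{-1}(K_X)$ sits over a compact base and is itself compact; intersecting with the (closed) complex submanifold $T$ gives a closed subset of a compact set, hence compact. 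Finally, $\eps^{-1}(K)$ is closed in $T$ by continuity and is contained in this compact set, so it is compact.

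For finiteness of fibers, any fiber $\eps^{-1}(e_2)$ is compact by the properness just established, and it lies inside $\pi^{-1}(p_2(e_2))$, which is a discrete subset of $T$ because $\pi$ is a covering map. A compact discrete subset of the Hausdorff space $T$ is finite, which completes the argument.

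The one delicate point is the claim that $T$ meets the compact set $B_R(E_1) \cap p_1^{-1}(K_X)$ in a compact set; this requires $T$ to be closed inside $E_1$ (at least locally), which I read as implicit in the paper's use of ``complex submanifold.'' In the main application, where $T = \TZ(K)$ embeds into $T(\shH)$ via $h \mapsto Q(h,-)$, this closedness is local and follows from the fact that the image of the integral stalk $\shH_{\ZZ,x}$ in the fiber of $T(\shH)$ is a lattice in a real subspace, which is automatically discrete and closed; the same local picture extends across $X$ via monodromy, which permutes lattices but preserves closedness.
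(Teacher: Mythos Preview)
Your proof is correct and follows the same approach as the paper: establish properness by using \eqref{eq:condition} to bound $\eps^{-1}(K)$ in $E_1$, then deduce finite fibers from discreteness of the covering-space fibers. If anything, you are more careful than the paper, which writes only that $\eps^{-1}(K) \subseteq B_R(E_1)$ and ``because it is closed, it must be compact'' without making explicit the intersection with $p_1^{-1}(K_X)$ that you correctly supply; your closing remark on the closedness of $T$ in $E_1$ likewise spells out a point the paper leaves implicit in the phrase ``complex submanifold.''
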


\begin{proof}
Recall that a holomorphic mapping is called \define{finite} if it is closed and has
finite fibers \cite[I.2.4]{SCV7}; an equivalent condition is that the mapping is
proper and has finite fibers. Let us first show that $\eps$ is proper. Given an
arbitrary compact subset $K \subseteq E_2$, we can find $r > 0$ such that $K
\subseteq B_r(E_2)$. According to \eqref{eq:condition}, the preimage $\eps^{-1}(K)$
is contained in $B_R(E_1)$ for some $R > 0$; because it is closed, it must be
compact. Now it is easy to show that $\eps$ has finite fibers: the fibers of $\eps$
are contained in the fibers of $\pi$, which are discrete because $\pi \colon T
\to X$ is a covering space; being compact, they must therefore be finite sets.
\end{proof}

\begin{corollary}
The image of $\eps$ is an analytic subset of $E_2$.
\end{corollary}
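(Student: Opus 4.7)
The argument is a direct application of Remmert's proper mapping theorem, combined with the preceding lemma. Recall that Remmert's theorem \cite[III.4.3]{SCV7} asserts that if $f \colon Y \to Z$ is a proper holomorphic mapping between reduced complex spaces, then $f(Y)$ is a closed analytic subset of $Z$. Since a finite holomorphic mapping is in particular proper---its fibers are compact (being finite sets) and it is closed by definition---the preceding lemma supplies exactly the hypothesis that is needed.

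The plan therefore reduces to observing that $\eps \colon T \to E_2$ meets the hypotheses of Remmert's theorem. This is immediate: $T$ is a complex manifold, being a submanifold of $E_1$; $E_2$ is a complex manifold, being the total space of a holomorphic vector bundle on $X$; and $\eps$ is holomorphic and proper by the preceding lemma. Remmert's theorem then yields that $\eps(T)$ is a closed analytic subset of $E_2$, which in particular gives the statement of the corollary. There is no real obstacle here: all of the analytic substance has already been absorbed into the boundedness condition \eqref{eq:condition} and its translation into properness in the preceding lemma, so the corollary is a formal consequence.
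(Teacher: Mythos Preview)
Your proof is correct and matches the paper's approach: the paper invokes the finite mapping theorem \cite[I.8.2]{SCV7}, which it notes is a special case of Remmert's proper mapping theorem, while you appeal to Remmert's theorem directly. The content is the same.
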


\begin{proof}
This follows from the finite mapping theorem \cite[I.8.2]{SCV7}, which is a special
case of Remmert's proper mapping theorem.
\end{proof}

Of course, $\eps$ is still a local biholomorphism; but the images of different sheets
of the covering space $T$ may intersect in $E_2$. This picture suggests the following
result about the normalization of $\eps(T)$.

\begin{lemma}
The normalization of $\eps(T)$ is a complex manifold, and the induced mapping from
$T$ to the normalization is a finite covering space.
\end{lemma}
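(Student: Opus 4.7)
The plan is to show that $\eps \colon T \to E_2$ is itself a local biholomorphism (étalé), so that $\eps(T)$ is locally a finite union of smooth branches, and then to identify the normalization of $\eps(T)$ with the space obtained by separating those branches.

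First, I would exploit the interplay between the covering-space structure of $\pi \colon T \to X$ and the inclusion $T \hookrightarrow E_1$. Over a sufficiently small open $V \subseteq X$, the preimage $\pi^{-1}(V)$ is a disjoint union of sheets $U_1, \dotsc, U_k$, each mapping biholomorphically to $V$ under $\pi$; the inclusion $T \hookrightarrow E_1$ identifies each $U_i$ with the graph of a holomorphic section $s_i \colon V \to E_1|_V$. Composing with $q$ then shows that $\eps|_{U_i}$ coincides, via $\pi|_{U_i}$, with the section $q \circ s_i \colon V \to E_2|_V$, so it is a biholomorphism onto a smooth analytic subset $V_i \subseteq E_2$. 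In particular $\eps$ is étalé, and near any $p \in \eps(T)$ the image is locally the union $V_1 \cup \dotsb \cup V_k$ of finitely many smooth branches.

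Because each $V_i$ is smooth (hence irreducible and normal at $p$), the normalization of this local union is simply the disjoint union of the distinct germs among the $V_i$'s, so the whole normalization $n \colon W_{\mathrm{norm}} \to \eps(T)$ is a complex manifold. The universal property of normalization, applied to the smooth (and therefore normal) space $T$ together with the finite holomorphic map $\eps$ from the preceding lemma, then produces a unique holomorphic lift $\tilde{\eps} \colon T \to W_{\mathrm{norm}}$ with $n \circ \tilde{\eps} = \eps$.

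Finally, I would verify that $\tilde{\eps}$ is a finite covering space. Finiteness follows formally: fibers of $\tilde{\eps}$ are contained in fibers of $\eps$, and for compact $K \subseteq W_{\mathrm{norm}}$ the preimage $\tilde{\eps}^{-1}(K)$ is closed in $T$ and contained in the compact set $\eps^{-1}(n(K))$. In the local picture, the restriction of $\tilde{\eps}$ to each sheet $U_i$ is a biholomorphism onto the unique sheet of $W_{\mathrm{norm}}$ lying above $V_i$, so $\tilde{\eps}$ is a local biholomorphism; together with finiteness, this yields a finite covering space. The main subtlety — and the step I expect to be the mildly tricky one — is handling the case where several sheets $U_i$ share the same germ $V_i$: they then all lift to the same sheet of $W_{\mathrm{norm}}$, so $\tilde{\eps}$ can have multi-sheeted fibers even though it is étalé, which is exactly what a nontrivial covering space looks like.
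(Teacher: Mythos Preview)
Your argument is essentially correct and more explicit than the paper's, but there is one small slip: you write that $\pi^{-1}(V)$ is a disjoint union of \emph{finitely many} sheets $U_1, \dotsc, U_k$, whereas the covering $\pi \colon T \to X$ may well have infinite degree. The finiteness you need comes not from $\pi$ but from the finiteness of $\eps$ established in the preceding lemma: given $p \in \eps(T)$, the fiber $\eps^{-1}(p)$ is a finite set $\{t_1, \dotsc, t_k\}$, each $t_i$ lies on some sheet $U_i$ of $\pi$, and properness of $\eps$ yields a neighborhood $W$ of $p$ with $\eps^{-1}(W) \subseteq U_1 \cup \dotsb \cup U_k$. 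With this correction your local description of $\eps(T) \cap W$ as a finite union of smooth branches goes through, and the rest of your argument is sound.

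The paper takes a shorter and more abstract route. It lets $Y$ be the normalization of $\eps(T)$, uses the universal property to get a finite map $f \colon T \to Y$, quotes the general fact that normal complex spaces are locally irreducible, and then quotes a second general fact that a finite holomorphic map into a locally irreducible space is automatically open. Finally it observes that, since $\pi \colon T \to X$ is a covering space, an open finite map out of the manifold $T$ factoring $\pi$ forces $Y$ to be a manifold and $f$ to be a finite covering. Your hands-on branch-separation argument makes the underlying geometry visible and avoids these black-box citations; the paper's version is terser but leans on standard results from the general theory of complex spaces.
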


\begin{proof}
Let $Y$ denote the normalization of $\eps(T)$; for the construction, see
\cite[I.14.9]{SCV7}. Because $T$ is a complex manifold, we obtain a factorization
\[
\begin{tikzcd}
T \rar{f} \arrow[bend left=40]{rr}{\eps} \arrow{drr}{\pi} 
		& Y \rar{\nu} & E_2 \dar{p_2} \\
	& & X;
\end{tikzcd}
\]  
note that $f$ is again a finite mapping. According to \cite[I.13.1]{SCV7}, $Y$ is
locally irreducible; now \cite[I.10.14]{SCV7} implies that $f \colon T \to Y$ is
open. Since $\pi \colon T \to X$ is a covering space, this is enough to guarantee
that $Y$ is again a complex manifold, and that $f \colon T \to Y$ is a finite
covering space.
\end{proof}

\section{The universal family of hyperplane sections}

\subsection{Description of the variation of Hodge structure}

The purpose of this chapter is to apply the general construction from above to the
universal family of hyperplane sections of a smooth projective variety. Let $X$ be a
smooth projective variety of odd dimension $2n+1$, and let $L$ be a very ample line
bundle on $X$. It determines an embedding of $X$ into the projective space $\PP \bigl(
H^0(X, L) \bigr)$. We denote by $B = \PP \bigl( H^0(X, L)^{\ast} \bigr)$ the dual
projective space; a point $b \in B$ corresponds to a hyperplane $H_b$, and therefore
to a hyperplane section $H_b \cap X$ of $X$. There is a natural incidence variety
\[
	\famX = \menge{(b,x) \in B \times X}{x \in H_b \cap X};
\]
it is a projective bundle over $X$, and therefore again
a smooth projective variety of dimension $2n + \dim B$. Let $f \colon \famX \to B$
denote the first projection, and $f_0 \colon \famX_0 \to B_0$ its restriction to the
Zariski-open subset where $H_b \cap X$ is nonsingular. 

On $B_0$, we have a polarized variation of $\ZZ$-Hodge structure $\shH$ of weight
zero, obtained by taking the quotient of $R^{2n} f_{0\ast} \ZZ(n)$ by the constant
part $H^{2n} \bigl( X, \ZZ(n) \bigr)$; note that the polarization is only defined
over $\QQ$ in general. Recall that for a smooth hyperplane section $Y = H \cap X$,
the quotient
\[
	H^{2n} \bigl( Y, \ZZ(n) \bigr) \big/ H^{2n} \bigl( X, \ZZ(n) \bigr) 
\]
is torsion-free (by the Lefschetz theorems); tensored with $\QQ$, it becomes
isomorphic to the variable part
\[
	\ker \Bigl( H^{2n} \bigl( Y, \QQ(n) \bigr) \to 
		H^{2n+2} \bigl( X, \QQ(n+1) \bigr) \Bigr),
\]
and therefore canonically polarized by the intersection product on $Y$.

As usual, let $M$ denote the polarized Hodge module of weight $\dim B$ with strict
support $B$, associated with $\shH$. In this situation, the filtered $\Dmod$-module
$(\Mmod, F_{\bullet})$ can be described concretely in terms of residues
\cite{Schnell-R}. Recall that when $Y = H \cap X$ is a smooth hyperplane section, we have
a residue mapping
\[
	\Res_Y \colon H^0 \bigl( X, \OmX^{2n+1}(kY) \bigr) \to F^{2n+1-k} H^{2n}(Y, \CC).
\]	
By applying this construction on each smooth hyperplane section, we can obtain
sections of $\shH$ from meromorphic $(2n+1)$-forms on $B \times X$ with poles along
$\famX$. To state the precise result, let $j \colon B_0 \into B$ denote the
inclusion. Then $\Mmod$ is a subsheaf of $\jl \shH$, and the space of sections $H^0(U,
F_k \Mmod)$ on an open set $U \subseteq B$ consists exactly of those $s \in H^0(U,
\jl \shH)$ that satisfy
\[
	s(b) = \Res_{H_b \cap X} \Bigl( \omega \restr{\{b\} \times X} \Bigr) \quad
		\text{at every point $b \in U \cap B_0$}
\]
for some choice of meromorphic $(2n+1)$-form
\[
	\omega \in H^0 \Bigl( U \times X, \Omega_{B \times X}^{2n+1} 
		\bigl( (n+1+k) \famX \bigr) \Bigr).
\]
In addition to this description, the following result is proved in
\cite[Corollary~4]{Schnell-R}.

\begin{theorem} \label{thm:ample}
The coherent sheaf $F_k \Mmod$ is a quotient of the ample vector bundle 
\[
	H^0 \bigl( X, \OmX^{2n+1} \tensor L^{n+1+k} \bigr) \tensor \shO_B(n+1+k), 
\]
and therefore globally generated.
\end{theorem}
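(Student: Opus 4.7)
The plan is to construct an explicit $\shO_B$-linear surjection from the stated bundle onto $F_k \Mmod$, using the residue description of $F_k \Mmod$ recalled just before the theorem. Ampleness and global generation then descend for free: the source $\shE_k := H^0(X, \OmX^{2n+1}\tensor L^{n+1+k}) \tensor \shO_B(n+1+k)$ is a direct sum of copies of the very ample line bundle $\shO_B(n+1+k)$, hence an ample, globally generated vector bundle, and both properties pass to quotients.

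To build the map, let $s \in H^0(B\times X,\, p_1^*\shO_B(1) \tensor p_2^*L)$ be the tautological section whose vanishing defines $\famX$. Given a local section $\sigma$ of $\shO_B(n+1+k)$ on an open $U \subseteq B$ and a global form $\eta \in H^0(X, \OmX^{2n+1}\tensor L^{n+1+k})$, the expression
\[
\omega_{\sigma,\eta} \;=\; \frac{p_1^*\sigma \cdot p_2^*\eta}{s^{n+1+k}}
\]
is a well-defined element of $H^0(U\times X,\, \Omega_{B\times X}^{2n+1}((n+1+k)\famX))$, since $\sigma$ and $\eta$ exactly cancel the twists introduced by dividing by $s^{n+1+k}$. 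By the residue description, $b \mapsto \Res_{H_b\cap X}(\omega_{\sigma,\eta}|_{\{b\}\times X})$ extends to a section in $H^0(U, F_k \Mmod)$, defining an $\shO_B$-linear map $\Phi \colon \shE_k \to F_k \Mmod$.

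Since $F_k \Mmod$ is coherent, surjectivity of $\Phi$ may be checked stalkwise. A germ of $F_k \Mmod$ at $b \in B$ is represented on some neighborhood $U \ni b$ by a form $\omega \in H^0(U\times X,\, \Omega_{B\times X}^{2n+1}((n+1+k)\famX))$. Decompose $\Omega_{B\times X}^{2n+1} = \bigoplus_{p+q=2n+1} p_1^*\Omega_B^p \tensor p_2^*\OmX^q$; for every $b' \in U$, the inclusion $i_{b'} \colon \{b'\}\times X \into B\times X$ annihilates $p_1^*\Omega_B^p$ whenever $p \geq 1$, so only the $(0,2n+1)$-component of $\omega$ contributes to the residues. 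After shrinking $U$ to a Stein neighborhood, the identification $\shO_{B\times X}(\famX) = p_1^*\shO_B(1) \tensor p_2^*L$ combined with the Künneth formula gives
\[
H^0\bigl(U\times X,\, p_2^*\OmX^{2n+1}((n+1+k)\famX)\bigr) \;=\; \shO_B(n+1+k)(U) \tensor H^0(X, \OmX^{2n+1}\tensor L^{n+1+k}).
\]
Hence this component is a finite sum of terms $\omega_{\sigma_j,\eta_j}$ and represents the same germ of $F_k \Mmod$ as $\omega$, exhibiting the germ in the image of $\Phi|_U$.

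The main obstacle is justifying that the $(0,2n+1)$-component alone yields the same germ of $F_k \Mmod$ as the original $\omega$: the other components are genuine meromorphic forms on $B\times X$, but their pullback to every fiber $\{b'\}\times X$ with $b' \in B_0 \cap U$ vanishes, so the section of $\jl \shH|_{B_0 \cap U}$ they produce is identically zero and they contribute the zero germ of $F_k \Mmod$. The remaining ingredients — verifying that $\Phi$ is a well-defined $\shO_B$-linear sheaf map, the Künneth step on a Stein open, and the transfer of ampleness and global generation to the quotient — are formal.
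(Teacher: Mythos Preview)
The paper does not actually prove this theorem; it merely records the statement and cites \cite[Corollary~4]{Schnell-R} for the proof. Your argument is therefore not competing with anything in the present paper, and it is in fact the natural proof given the residue description of $F_k \Mmod$ that the paper quotes just before the theorem.

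Your construction of $\Phi$ is correct, and the surjectivity argument is sound. Two small comments. First, the K\"unneth step does not require $U$ to be Stein: since $X$ is projective and $\omega_X \tensor L^{n+1+k}$ is coherent, the projection formula gives $p_{1\ast}\bigl(p_1^{\ast}\shO_B(n+1+k) \tensor p_2^{\ast}(\omega_X \tensor L^{n+1+k})\bigr) \simeq \shO_B(n+1+k) \tensor H^0(X, \omega_X \tensor L^{n+1+k})$ as a sheaf identity on $B$, and taking sections over any open $U$ already yields the finite-sum expression you need (the second tensor factor is finite-dimensional). Second, your sentence ``both properties pass to quotients'' is slightly loose: ampleness of a quotient requires the quotient to be locally free (or a suitable extension of the notion to coherent sheaves), whereas $F_k \Mmod$ is only known to be coherent. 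This does not matter for the theorem, whose actual conclusion is global generation, and that certainly passes to quotients. With these cosmetic adjustments, your proof is complete and is essentially the argument one would expect in the cited reference.
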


\subsection{Properties of the extension space}

Now let us see what our general construction produces in the special case of the
universal family of hyperplane sections. As usual, we denote by $\TZ$ the (possibly
disconnected) covering space of $B_0$ determined by the local system $\shHZ$, and by
$\eps \colon \TZ \to T(F_{-1} \Mmod)$ the holomorphic mapping induced by the
polarization. Fix some $K \geq 0$. According to the general result in
\theoremref{thm:extension}, we have a finite holomorphic mapping
\[
	\epst \colon \TZt(K) \to T(F_{-1} \Mmod)
\]
from a normal analytic space $\TZt(K)$ that contains $\TZ(K)$ as a dense open subset.
Also recall from \corollaryref{cor:Hdg} that we defined the extended locus of Hodge
classes as the preimage of the zero section. The fact that $F_{-1} \Mmod$ is a quotient of
an ample vector bundle has the following interesting consequence; it was predicted by
Clemens several years ago.

\begin{theorem}
The analytic space $\TZt(K)$ is holomorphically convex; every compact analytic
subset of dimension $\geq 1$ lies inside the extended locus of Hodge classes.  
\end{theorem}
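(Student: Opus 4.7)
The plan is to combine \theoremref{thm:ample} with Grauert's theorem on contractibility of the zero section of a ``weakly negative'' vector bundle. Applying \theoremref{thm:ample} with $k = -1$, the coherent sheaf $F_{-1} \Mmod$ is a quotient of the ample vector bundle
\[
	\shE = H^0 \bigl( X, \OmX^{2n+1} \tensor L^n \bigr) \tensor \shO_B(n)
\]
on $B$ (each summand being the ample line bundle $\shO_B(n)$). The induced surjection $\Sym \shE \onto \Sym F_{-1} \Mmod$ yields a closed embedding $T(F_{-1} \Mmod) \hookrightarrow T(\shE)$ of analytic spaces that identifies the zero sections on both sides; composing with the finite mapping $\epst$ then produces a finite, hence proper, holomorphic mapping $\TZt(K) \to T(\shE)$. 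It therefore suffices to prove both assertions with $T(\shE)$ in place of $T(F_{-1}\Mmod)$.

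Next, I would invoke Grauert's theorem: because $\shE$ is ample on the projective variety $B$, the zero section of $T(\shE) = \Spec_B \Sym \shE$ (i.e.\ the total space of $\shE^{\ast}$) is exceptional in the sense of Grauert. Concretely, the graded $\CC$-algebra $R = \bigoplus_{k \geq 0} H^0 \bigl( B, \Sym^k \shE \bigr)$ is finitely generated, and the canonical morphism
\[
	\rho \colon T(\shE) \to W \defeq \Spec R
\]
is proper and surjective onto the Stein space $W$; it contracts the zero section $B \subseteq T(\shE)$ to the vertex $0 \in W$ and is biholomorphic over $W \setminus \{0\}$. Consequently, every compact analytic subset of $T(\shE)$ of positive dimension must lie in the zero section, since its image in the Stein space $W$ can only be a finite set of points.

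Both assertions of the theorem now follow. The composition $\rho \circ \epst \colon \TZt(K) \to W$ is proper and its target is Stein, so $\TZt(K)$ is holomorphically convex. If $C \subseteq \TZt(K)$ is a compact analytic subset of dimension $\geq 1$, then $\epst(C) \subseteq T(F_{-1}\Mmod) \subseteq T(\shE)$ is compact analytic of the same positive dimension (as $\epst$ is finite), hence lies in the zero section of $T(\shE)$; since the embedding $T(F_{-1}\Mmod) \hookrightarrow T(\shE)$ matches zero sections, this puts $\epst(C)$ in the zero section of $T(F_{-1}\Mmod)$, and so $C \subseteq \epst^{-1}(0) = \Hdgt(\shH) \cap \TZt(K)$. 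The main piece that needs care is the Grauert input in the second paragraph, namely the finite generation of $R$ and the fact that $\rho$ is the Remmert reduction; for $\shE$ a direct sum of copies of $\shO_B(n)$ this is the classical affine-cone/Veronese picture and is well understood.
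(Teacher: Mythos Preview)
Your proof is correct and follows essentially the same approach as the paper: embed $T(F_{-1}\Mmod)$ into the total space of the dual of the ample bundle from \theoremref{thm:ample}, contract the zero section via Grauert to obtain a Stein space, and conclude holomorphic convexity from being proper over Stein while forcing positive-dimensional compact subvarieties into the zero section. Your write-up is somewhat more explicit about the Grauert/Remmert-reduction step and about why $\epst(C)$ retains positive dimension, but the argument is the same.
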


\begin{proof}
For a discussion of holomorphic convexity, see \cite{Cartan}. The result in
\theoremref{thm:ample} shows that $T(F_{-1} \Mmod)$ embeds into the holomorphic
vector bundle
\[
	E = T \Bigl( H^0 \bigl( X, \OmX^{2n+1} \tensor L^{n} \bigr) \tensor \shO_B(n) \Bigr).
\]
Since $E$ is the dual of an ample vector bundle, the zero section can be
contracted to produce a Stein space $Y$; in particular, $E$ is holomorphically
convex. Because $\epst \colon \TZt(K) \to T(F_{-1} \Mmod)$ is finite, it follows that
$\TZt(K)$ is proper over $Y$, and therefore holomorphically convex. Every compact
analytic subset of positive dimension has to map into the zero section of $E$, and
must therefore be contained in the extended locus of Hodge classes.
\end{proof}

\providecommand{\bysame}{\leavevmode\hbox to3em{\hrulefill}\thinspace}
\providecommand{\MR}{\relax\ifhmode\unskip\space\fi MR }
\providecommand{\MRhref}[2]{%
  \href{http://www.ams.org/mathscinet-getitem?mr=#1}{#2}
}
\providecommand{\href}[2]{#2}

\end{document}